\DeclareMathOperator{\Bp}{B_p}
\DeclareMathOperator{\Bq}{B_q}
\DeclareMathOperator{\Binfty}{B_\infty}
\DeclareMathOperator{\Blog}{B_{\log}}
\DeclareMathOperator{\M}{M}
\DeclareMathOperator{\FW}{FW} 
\newcommand{\Chi}{\mathcal{X}} 
\let\emptyset\varnothing 
\DeclareMathOperator*{\esssup}{ess\,sup}
\DeclareMathOperator{\RHI}{RHI}
\newcommand{\N}{\mathbb{N}}
\newcommand{\T}{\mathbb{T}}
\newcommand{\R}{\mathbb{R}}
\newcommand{\D}{\mathbb{D}}
\def\XXint#1#2#3{\mkern3mu{\setbox0=\hbox{$#1{#2#3}{\int}$ }
\vcenter{\hbox{$#2#3$ }}\kern-.6\wd0}}
\def\vint_#1{\mathchoice%
        {\mathop{\kern 0.2em\vrule width 0.6em height 0.69678ex depth -0.58065ex
                \kern -0.8em \intop}\nolimits_{\kern -0.4em#1}}%
        {\mathop{\kern 0.1em\vrule width 0.5em height 0.69678ex depth -0.60387ex
                \kern -0.6em \intop}\nolimits_{#1}}%
        {\mathop{\kern 0.1em\vrule width 0.5em height 0.69678ex
            depth -0.60387ex
                \kern -0.6em \intop}\nolimits_{#1}}%
        {\mathop{\kern 0.1em\vrule width 0.5em height 0.69678ex depth -0.60387ex
                \kern -0.6em \intop}\nolimits_{#1}}}
\theoremstyle{plain}
\newtheorem{theorem}{Theorem}
\newtheorem{example}[theorem]{Example}
\newtheorem{lemma}[theorem]{Lemma}
\numberwithin{theorem}{section}
 \newtheorem{corollary}[theorem]{Corollary}
\newtheorem{remark}[theorem]{Remark}
\theoremstyle{definition}
\newtheorem{dfn}[theorem]{Definition}
\newtheorem*{dfn*}{Definition}
\newcommand{\ud}[0]{\,\mathrm{d}}
\numberwithin{equation}{section}
\begin{document}

\title[Characterizations for arbitrary Békollé-Bonami weights]{Characterizations for arbitrary Békollé-Bonami weights}

\date{\today}

 \author[Carlos Mudarra]{Carlos Mudarra}
\address[C.M.]{Department of Mathematical Sciences, Norwegian University of Science and Technology, 7941 Trondheim, Norway}
\email{carlos.mudarra@ntnu.no}

 \author[Karl-Mikael Perfekt]{Karl-Mikael Perfekt}
\address[K.-M.P.]{Department of Mathematical Sciences, Norwegian University of Science and Technology, 7941 Trondheim, Norway}
\email{karl-mikael.perfekt@ntnu.no}

\makeatletter
\@namedef{subjclassname@2020}{{\mdseries 2020} Mathematics Subject Classification}
\makeatother

\keywords{Békollé-Bonami weights, reverse H\"older inequality, maximal operator, Fujii-Wilson condition}

\subjclass[2020]{46E30, 42B25}


\begin{abstract}
We precisely characterize the relationships between the reverse H\"older inequality, the Fujii-Wilson condition, the Békollé-Bonami $\Bp$ condition, the $\Binfty$ condition, and the reverse Jensen inequality, for arbitrary weights in the unit disc. This is achieved by introducing new side conditions that turn out to be necessary and sufficient. The side conditions are simple and testable, and can be interpreted as integral versions of the much stronger condition of bounded hyperbolic oscillation, which has been considered earlier in the literature. 
\end{abstract}

\maketitle

\section{Introduction and Main results}
The Békollé-Bonami class $\Bp(\mathbb{D})$ was introduced in \cite{B81, BB78} in order to classify the weights $w$ in the unit disc $\mathbb{D}$ such that the Bergman projection, as well as the associated maximal function \cite{PR13}, is bounded on $L^p(\mathbb{D}, w)$, $1 < p < \infty$. The Békollé-Bonami condition is similar in form to the Muckenhoupt $A_p$ condition \cite{M72}; one simply replaces the basis of all balls with the much sparser basis of all Carleson squares in the definition. 

The limiting class $A_\infty = \cup_{p > 1} A_p$ plays a major role in the classical weighted theory of harmonic analysis. Accordingly, there are a large number of equivalent characterizations of $A_\infty$ weights. For example a weight $w$ belongs to $A_\infty$ if and only if any of the following conditions apply: $w$ satisfies a reverse H\"older inequality \cite{CN95, HP13, HPR12}, $w$ satisfies the Fujii--Wilson condition \cite{F77,W87}, $w$ satisfies a condition of quantitative absolute continuity \cite{FM74,M73}, or if $w$ satisfies reverse Jensen exponential inequality \cite{H84}. For a detailed exposition on classical results concerning $A_p$ weights, we refer the reader to \cite[Chapter IV]{GCRF85} and \cite[Chapter 7]{G14}.

In the setting of Békollé-Bonami weights, the equivalence between any two of these conditions turns out to be false. The recent preprint \cite{C25} demonstrates this through many illustrative examples. We refer also to the influential paper \cite{DMFO16} where variants of the $A_\infty$-condition are studied for general bases (in particular, for the basis of Carleson squares). The primary obstacle is that pointwise estimates of the form
\begin{equation} \label{eq:pointwise}
w \leq \textrm{const} \cdot \mathrm{M}w
\end{equation}
fail for maximal functions $\mathrm{M}$ formed from sparse bases of sets.

 A natural subclass of weights $w$ in which the equivalences between the various concepts of $\Binfty$ are restored, was identified relatively recently \cite{APR19}. Namely, the class of weights $w$ which are constant in top-halves of Carleson squares, that is, weights of bounded hyperbolic oscillation in the unit disc. Note that the pointwise estimate \eqref{eq:pointwise} holds under this condition. Further studies of Békollé-Bonami for this class and other subclasses of weights can be found in \cite{B04, DLP25, C25, LN23}. 

The purpose of this article is to characterize the pairwise equivalences between various notions of $\Binfty$ in terms of necessary and sufficient side conditions. We begin by defining these key conditions, which for Carleson squares $Q$ are formulated in terms of the dyadic maximal function $\mathrm{M}_Q$,
\[
\mathrm{M}_Qw(x) = \sup_{\substack{Q' \ni x, \, Q' \subset Q \\ Q'\textrm{ dyadic}}} \vint_{Q'} w, \qquad x \in Q,
\]
and the dyadic minimal function 
$\mathrm{m}_Q$,
\[
\mathrm{m}_Qw(x) = \inf_{\substack{Q' \ni x, \, Q' \subset Q \\ Q'\textrm{ dyadic}}} \vint_{Q'} w\, \qquad x \in Q.
\]
More precise definitions of $\mathrm{M}_Q$ and $\mathrm{m}_Q$ are given in Section~\ref{sec:prelim}.

\begin{dfn*} \label{def:Lpandmeasurecond}
Let $w$ be a weight in $\D.$

We say that $w$ satisfies the condition \eqref{eq:Lpconditionmaximal}, when there exist $p_0>1$ and $C>0$ so that
\begin{equation}\label{eq:Lpconditionmaximal}
\int_{Q} w^p \leq C \int_Q (\mathrm{M}_Qw)^p \tag{$\mathrm{M} L^p$}
\end{equation}
for all Carleson boxes $Q$ and all $1<p \leq p_0$.

We say that $w$ satisfies condition \eqref{eq:conditionforFWiffBinfty} if there exist constants $A>1$ and $b\in (0,1)$ so that 
\begin{equation}\label{eq:conditionforFWiffBinfty}
w \left( \lbrace x\in Q\, : \, w(x) \geq A \cdot \mathrm{M}_Q w(x) \rbrace \right) \leq b \cdot w(Q), \tag{$\mathrm{M} \ud w$}
\end{equation}
for every Carleson square $Q.$    

We say that $w$ satisfies the condition \eqref{eq:Lpconditionminimal} when there exist $p_0>1$ and $C>0$ so that
\begin{equation}\label{eq:Lpconditionminimal} 
\frac{1}{w} \in L^{1/(p_0-1)} (\D) \quad \text{and} \quad  \int_Q \left(\frac{1}{w}\right)^{\frac{1}{p-1}} \leq C \int_Q \left(\frac{1}{\mathrm{m}_Qw} \right)^{\frac{1}{p-1}} \tag{$\mathrm{m} L^p$}
\end{equation}
for all Carleson boxes $Q$ and all $p \geq p_0.$

Finally, we say that $w$ satisfies condition \eqref{eq:logconditionminimal} if $\int_{\D} \log w > -\infty$ and there exists $C>0$ so that
\begin{equation}\label{eq:logconditionminimal}
\int_Q \log (  \mathrm{m}_Q(w))  \leq  \int_Q  \log( C w ) \tag{$\mathrm{m}\log$}
\end{equation}
for all Carleson boxes $Q.$ 
\end{dfn*}

Note that condition \eqref{eq:Lpconditionminimal} implies \eqref{eq:logconditionminimal}, by a standard limiting argument. Note also that if the maximal and minimal operators were formed with respect to a basis of sets for which the Lebesgue differentiation theorem holds, all conditions in the definition would be trivially satisfied. In our setting, weights of bounded hyperbolic oscillation also satisfy the conditions in an obvious manner, since the pointwise estimates 
$$
C^{-1} \mathrm{m}_Q w (x) \leq w(x) \leq C \mathrm{M}_Q w(x), \qquad w \in Q,
$$
hold for such weights as well. In general, such estimates are far from holding true in the basis of Carleson squares, even for $\Bp$ weights with the reverse H\"older inequality, cf. Example~\ref{ex:BpandRHInotBHO}.

We are ready to state our main results. The precise definitions of the various $\Binfty$-type conditions are given in Section~\ref{sec:prelim}. We only note here that we have chosen $\Binfty(\D)$ itself to mean the weakest form of the quantitative absolute continuity condition, and for $\Blog(\D)$ to denote the class of weights which satisfy a reverse Jensen inequality with respect to the exponential.

\begin{theorem}\label{thm:maintheoremBinftyiffFW}
The following are equivalent.
\begin{itemize}
    \item[(a)] $w$ satisfies the Fujii-Wilson condition \eqref{eq:definitionofFujiiWilson} and condition \eqref{eq:conditionforFWiffBinfty}.
    \item[(b)] $w$ belongs to $\Binfty(\D)$. 
\end{itemize}
\end{theorem}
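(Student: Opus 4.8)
The plan is to prove the two implications separately, exploiting the Fujii--Wilson condition as the ``common denominator'' and using \eqref{eq:conditionforFWiffBinfty} to bridge the gap between the sparse maximal function $\mathrm{M}_Q w$ and the weight $w$ itself. Throughout, fix a Carleson square $Q$; all estimates will be uniform in $Q$. Recall that the Fujii--Wilson condition controls $\int_Q \mathrm{M}_Q w$ by $w(Q)$, whereas $\Binfty(\D)$ is the quantitative absolute continuity statement: for every $\varepsilon>0$ there is $\delta>0$ so that $|E|\le \delta|Q|$ for $E\subset Q$ implies $w(E)\le\varepsilon\, w(Q)$ (or, equivalently in the sparse setting, a scale-invariant weak form thereof).

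For the direction (a) $\Rightarrow$ (b): starting from a set $E\subset Q$ with $|E|$ small, I would split $E$ into the ``good'' part $G = E\cap\{w < A\,\mathrm{M}_Q w\}$ and the ``bad'' part $B = E\cap\{w \ge A\,\mathrm{M}_Q w\}$. On $G$ one has the pointwise bound $w\le A\,\mathrm{M}_Q w$, so a Calder\'on--Zygmund stopping-time decomposition of $Q$ relative to the dyadic maximal function—choosing stopping cubes where $\vint_{Q'} w$ first exceeds a large multiple of $\vint_Q w$—combined with the Fujii--Wilson bound lets me estimate $w(G)$ by the Lebesgue measure of $G$ against the average of $w$, giving $w(G) \lesssim (|G|/|Q|)^{\eta} w(Q)$ for some $\eta>0$; this is the classical argument that Fujii--Wilson gives a reverse-H\"older-type gain, which in turn gives quantitative absolute continuity on the good part. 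For the bad part, \eqref{eq:conditionforFWiffBinfty} says precisely $w(B)\le b\,w(Q)$ with $b<1$; to upgrade this to smallness one iterates over the generations of the stopping-time tree: the bad set at each generation carries at most a fixed fraction $b$ of the mass of its parent, so summing the geometric series and combining with the fact that deep generations have small Lebesgue measure yields $w(B)\le \varepsilon\, w(Q)$ once $|E|/|Q|$ is small enough. Adding the two contributions proves $w\in\Binfty(\D)$.

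For the direction (b) $\Rightarrow$ (a): that $\Binfty(\D)$ implies the Fujii--Wilson condition is essentially known (it is one of the standard implications among $\Binfty$-type conditions that survives in the sparse setting, since Fujii--Wilson is the weakest of them), so the substance is deriving \eqref{eq:conditionforFWiffBinfty}. Here I would argue by contradiction/contrapositive: if \eqref{eq:conditionforFWiffBinfty} fails, then for every $A$ and every $b<1$ there is a Carleson square $Q$ with $w(\{x\in Q : w(x)\ge A\,\mathrm{M}_Q w(x)\}) > b\,w(Q)$. Call this exceptional set $E_A$. The point is that $E_A$ must be Lebesgue-small: on $E_A$ we have $w \ge A\,\mathrm{M}_Q w \ge A\vint_{E_A} w$ pointwise (using $\mathrm{M}_Q w \ge \vint_Q w$ is too weak, so instead integrate the bound $w\ge A\,\mathrm{M}_Q w$ over $E_A$ and use the weak-type $(1,1)$ bound for $\mathrm{M}_Q$, i.e.\ $|\{\mathrm{M}_Q w > \lambda\}| \le \lambda^{-1} w(Q)$, to conclude $|E_A| \lesssim (A\vint_Q w)^{-1} w(Q) = |Q|/A$). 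Thus $|E_A|/|Q| \to 0$ as $A\to\infty$ while $w(E_A)/w(Q) > b$ stays bounded below, directly contradicting the quantitative absolute continuity in $\Binfty(\D)$. This contradiction establishes \eqref{eq:conditionforFWiffBinfty}.

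I expect the main obstacle to be the ``bad part'' estimate in (a) $\Rightarrow$ (b): turning the single-scale bound $w(B)\le b\,w(Q)$ with $b<1$ into genuine quantitative absolute continuity requires a careful self-improving iteration along the stopping-time tree, and one must verify that the geometric decay of the $w$-mass across generations is not defeated by the growth in the number of stopping cubes—this is where the interplay with Fujii--Wilson (which prevents the dyadic averages from growing too fast) is essential, and getting the bookkeeping right so that the final bound is a genuine modulus of continuity $\varepsilon(\delta)\to 0$ rather than merely $\varepsilon<1$ is the delicate point. The other steps are comparatively routine: the weak-type $(1,1)$ bound for $\mathrm{M}_Q$ is standard, and the implication $\Binfty(\D)\Rightarrow$ Fujii--Wilson should already be available from the preliminaries or from a short direct argument.
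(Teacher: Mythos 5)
Your overall architecture (good/bad splitting driven by \eqref{eq:conditionforFWiffBinfty}, with the Fujii--Wilson self-improvement supplying the gain on the good part) is essentially the paper's, but there is a genuine problem in your (a) $\Rightarrow$ (b) direction: you are aiming at the wrong target, and the step you yourself flag as delicate would fail. In this paper $\Binfty(\D)$ is the \emph{single-pair} condition \eqref{eq:defBifnty} (there exist $\alpha,\beta\in(0,1)$), not a full modulus of absolute continuity. For that weaker statement your splitting already suffices with no iteration: on $G=E\cap\{w<A\,\mathrm{M}_Qw\}$, H\"older together with the $L^p$ self-improvement $\vint_Q(\mathrm{M}_Qw)^p\lesssim(\vint_Qw)^p$ (the paper's Lemma \ref{lemma:selfimprovedyadicmaximal}, proved exactly by the stopping-time/FW argument you gesture at) gives $w(G)\lesssim (|E|/|Q|)^{1/p'}w(Q)$, while $w(B)\le b\,w(Q)$ by hypothesis; taking $|E|/|Q|\le\alpha$ small enough to make the first term at most $(1-b)/2\,w(Q)$ yields $\beta=(1+b)/2<1$, which is precisely the content of the paper's Lemma \ref{lem:MAbconditioandFwimplyBinfty}. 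By contrast, your proposed upgrade of $w(B)\le b\,w(Q)$ to $\le\varepsilon\,w(Q)$ by iterating along the stopping tree does not work: $B=E\cap\{w\ge A\,\mathrm{M}_Qw\}$ is one fixed set; applying \eqref{eq:conditionforFWiffBinfty} on a stopping cube $Q'$ only gives $w(B\cap Q')\le b\,w(Q')$, and summing over a generation reproduces the same factor $b$ rather than $b^k$. To gain geometric decay you would need $B$ to be confined to deep generations, i.e.\ $\mathrm{M}_Qw$ to be large on $B$, which is exactly the pointwise domination \eqref{eq:pointwise} that fails for the Carleson basis. Moreover, the ``for every $\varepsilon$ there is $\delta$'' form of absolute continuity is characterized in Corollary \ref{cor:Binfty+iffMAb+andFW} by the strictly stronger hypothesis \eqref{eq:conditionFWiffRHIForeverythereexists}, so no such self-improvement from a single pair $(A,b)$ should be expected. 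Dropping the iteration and stopping at $\beta<1$ repairs this direction completely.

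In the direction (b) $\Rightarrow$ (a), your derivation of \eqref{eq:conditionforFWiffBinfty} is correct and is just the contrapositive of the paper's Lemma \ref{lem:BinftyimpliesMAbcondition}; note that the bound you call ``too weak'' is in fact enough: $\{w\ge A\,\mathrm{M}_Qw\}\subset\{w\ge A\,w_Q\}$ and Chebyshev for $w$ give $|E_A|\le|Q|/A$ directly, with no need for the weak-$(1,1)$ inequality (which, as you state it, is only valid for $\lambda\ge w_Q$ anyway). The real gap here is that you defer $\Binfty\Rightarrow$ \eqref{eq:definitionofFujiiWilson} to ``the preliminaries or a short direct argument''. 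It is not in the preliminaries and it is not short: in the paper it is Lemma \ref{lem:BinftyimpliesFWanddoubling}, which first shows that \eqref{eq:defBifnty} implies doubling, then runs a Calder\'on--Zygmund iteration at levels $w_Q\lambda^k$ to bound $\int_Q\mathrm{M}_Qw$, and finally uses doubling via Lemma \ref{lem:equivalentmaximalfunctions} to pass from the dyadic $\mathrm{M}_Qw$ to the non-dyadic $\mathrm{M}(w\Chi_Q)$ appearing in the definition of \eqref{eq:definitionofFujiiWilson}. The result is known (a version appears in the literature the paper cites), so this is an incomplete citation rather than an error, but as written your argument for (b) $\Rightarrow$ (a) is not self-contained.
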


\begin{theorem}\label{thm:maintheoremBinftyiffRHI}
The following are equivalent.
\begin{itemize}
    \item[(a)] $w$ satisfies the Fujii-Wilson condition \eqref{eq:definitionofFujiiWilson} and condition \eqref{eq:Lpconditionmaximal}.
    \item[(b)] $w$ satisfies a reverse H\"older inequality \eqref{eq:definitionofRHI}.
\end{itemize}
\end{theorem}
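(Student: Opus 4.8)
The plan is to prove the two implications separately, exploiting the Fujii--Wilson condition as the common backbone and using condition \eqref{eq:Lpconditionmaximal} to transfer an integrability gain from the maximal function $\mathrm{M}_Q w$ back to $w$ itself.

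For the direction (b)$\Rightarrow$(a): first I would recall that the reverse H\"older inequality \eqref{eq:definitionofRHI} is, by design, a self-improving statement of the form $\vint_Q w^{p} \leq C \big(\vint_Q w\big)^{p}$ for some $p>1$ and all Carleson boxes $Q$; standard self-improvement (as in the classical Gehring-type argument adapted to the dyadic basis on a fixed $Q$) lets me assume this holds for a whole interval $1<p\le p_0$. From here, \eqref{eq:Lpconditionmaximal} is almost immediate: since $\vint_{Q'}w \le \vint_{Q'}w^{p}{}^{1/p}$ trivially but we want the reverse, I would instead use that for any dyadic $Q'\subset Q$ the reverse H\"older inequality on $Q'$ gives $\big(\vint_{Q'}w\big)^{p}\ge c\,\vint_{Q'}w^{p}$, hence $(\mathrm{M}_Q w)^p \ge c\, \mathrm{M}_Q(w^p)\ge c\, w^p$ a.e.\ by the Lebesgue differentiation theorem for the dyadic basis on $Q$ (which \emph{does} hold, the subtlety in the paper being only about comparing with $w$ pointwise, not about differentiation of $w^p$). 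Integrating yields \eqref{eq:Lpconditionmaximal}. The Fujii--Wilson condition \eqref{eq:definitionofFujiiWilson} is known to follow from \eqref{eq:definitionofRHI} by the classical argument (a reverse H\"older inequality controls $\int_Q \mathrm{M}_Q w$ by $w(Q)$ via a good-$\lambda$ or direct level-set estimate), so I would cite or reproduce that.

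For the harder direction (a)$\Rightarrow$(b): the Fujii--Wilson condition provides, for each Carleson box $Q$, the bound $\int_Q \mathrm{M}_Q w \le C\, w(Q)$, which is exactly an $L^1$ reverse H\"older-type estimate for $\mathrm{M}_Q w$. The goal is to upgrade this to an $L^p$ estimate $\int_Q (\mathrm{M}_Q w)^{p} \le C\, |Q|^{1-p} w(Q)^{p}$ for small $p>1$, and then feed this into \eqref{eq:Lpconditionmaximal} to conclude $\int_Q w^{p}\le C\,|Q|^{1-p}w(Q)^p$, i.e.\ \eqref{eq:definitionofRHI}. The upgrade of $\int_Q\mathrm{M}_Q w\le C w(Q)$ to an $L^p$ bound is a Fujii--Wilson-to-reverse-H\"older self-improvement: one runs a Calderón--Zygmund stopping-time decomposition of $Q$ at height $\lambda$ relative to $\vint_Q w$, uses that on each stopping cube the average is comparable to $\lambda$, and that $\mathrm{M}_Q w \le \mathrm{M}_{Q_j}w + \lambda$-type estimates propagate, together with the Fujii--Wilson bound applied on each stopping cube to control $w$ of the bad set; summing a geometric series in the stopping generations gives the weak-type-to-$L^p$ improvement for $\mathrm{M}_Q w$. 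This is where the main work lies, and it is genuinely a Fujii--Wilson phenomenon: without \eqref{eq:Lpconditionmaximal} one would only recover a reverse H\"older inequality for $\mathrm{M}_Q w$, not for $w$.

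The main obstacle I anticipate is precisely the self-improvement step in (a)$\Rightarrow$(b): making the stopping-time iteration close in the Carleson-square basis, where the parent of a dyadic subsquare of $Q$ is again a subsquare of $Q$ so the combinatorics is clean, but where one must be careful that the Fujii--Wilson constant is uniform over all Carleson boxes (including the stopping subcubes, which are themselves Carleson boxes of $\D$ up to the natural dyadic structure on $Q$) — this uniformity is what the hypothesis \eqref{eq:definitionofFujiiWilson} supplies. A secondary technical point is the bookkeeping in deducing the scale-invariant form $\int_Q w^p \le C|Q|^{1-p}w(Q)^p$ from the non-normalized inequality \eqref{eq:Lpconditionmaximal} combined with the $L^p$ bound on $\mathrm{M}_Q w$; this is routine once the exponents $p_0$ from \eqref{eq:Lpconditionmaximal} and from the self-improved Fujii--Wilson estimate are intersected to a common range $1<p\le p_0$.
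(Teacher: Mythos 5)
In the direction (b)$\Rightarrow$(a), your derivation of \eqref{eq:Lpconditionmaximal} rests on the claim that $\mathrm{M}_Q(w^p)\ge w^p$ a.e.\ ``by the Lebesgue differentiation theorem for the dyadic basis on $Q$'', and that claim is false: the dyadic Carleson boxes containing a fixed point $x\in Q$ do not shrink to $x$ (the smallest one has sidelength comparable to $1-|x|$), so no differentiation theorem is available for this basis, and pointwise domination of a function by its dyadic Carleson maximal function is exactly the estimate \eqref{eq:pointwise} whose failure motivates the paper; Example~\ref{ex:BpandRHInotBHO} even exhibits a weight satisfying \eqref{eq:definitionofRHI} and a $\Bq$ condition for which $w\cdot\Chi_Q\not\lesssim \mathrm{M}_Q w$, and the same phenomenon occurs with $w$ replaced by $w^p$. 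The step is easily repaired, and the repair is the paper's Lemma~\ref{lem:RHIimpliesepsiloncondition}: no pointwise comparison is needed, since $\mathrm{M}_Q w\ge w_Q$ on $Q$ gives $\int_Q(\mathrm{M}_Q w)^p\ge |Q|\,(w_Q)^p$, while Jensen (to pass from $p_0$ down to $p$; no Gehring-type self-improvement is needed for this, and upward self-improvement is not something you may assume in this basis) together with \eqref{eq:definitionofRHI} gives $\int_Q w^p\le C|Q|\,(w_Q)^p$. Relatedly, for deducing \eqref{eq:definitionofFujiiWilson} from \eqref{eq:definitionofRHI} you appeal to ``the classical argument'' via good-$\lambda$ or level sets; that machinery is again basis-sensitive, and the paper instead routes this through $\eqref{eq:definitionofRHI}\Rightarrow\eqref{eq:defBifnty}$ by plain H\"older (Lemma~\ref{lem:RHIimpliesBinftyanddoub}) and then $\eqref{eq:defBifnty}\Rightarrow$ doubling and \eqref{eq:definitionofFujiiWilson} by a Calder\'on--Zygmund argument (Lemma~\ref{lem:BinftyimpliesFWanddoubling}); an alternative correct route would use $L^p$-boundedness of the Carleson-box maximal operator plus H\"older, but that boundedness would have to be supplied, not just cited as classical.

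Your (a)$\Rightarrow$(b) argument is essentially the paper's: the upgrade of the Fujii--Wilson bound to $\vint_Q(\mathrm{M}_Q w)^p\lesssim\bigl(\vint_Q w\bigr)^p$ for $1<p<4L/(4L-1)$ is exactly Lemma~\ref{lemma:selfimprovedyadicmaximal}, proved via the Calder\'on--Zygmund decomposition, the localization $\mathrm{M}_Q w=\mathrm{M}_{Q_j^\lambda}w$ on the stopping boxes, and the layer-cake formula --- i.e., the stopping-time scheme you sketch, with the uniformity over boxes supplied by \eqref{eq:definitionofFujiiWilson} as you note --- and then \eqref{eq:Lpconditionmaximal}, applied to a single such exponent $p$ (cf. Remark~\ref{rem:verificationofLpestimateinmaintheoremRHI}), transfers the bound from $\mathrm{M}_Q w$ to $w$, yielding \eqref{eq:definitionofRHI}.
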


In Theorem~\ref{thm:maintheoremBinftyiffRHI} it is actually enough to verify the inequality of \eqref{eq:Lpconditionmaximal} for one sufficiently small number $p$ (depending on the Fujii-Wilson constant), cf. Remark \ref{rem:verificationofLpestimateinmaintheoremRHI}.

Our other main theorems are as follows.
\begin{theorem}\label{thm:maintheoremBp}
The following are equivalent.
\begin{itemize}
    \item[(a)] $w$ is a $\Bq(\D)$ weight for some $1\leq q < \infty.$
    \item[(b)] $w$ belongs to $\Binfty(\D)$ and satisfies condition \eqref{eq:Lpconditionminimal}.
\end{itemize}
\end{theorem}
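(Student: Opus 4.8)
The plan is to work with the equivalent description: for $1<q<\infty$, $w\in\Bq(\D)$ if and only if $\vint_{Q} w^{-s}\leq C(\vint_{Q} w)^{-s}$ for every Carleson box $Q$, where $s=1/(q-1)$; the value $q=1$ needs no separate treatment, since $B_1(\D)\subseteq B_2(\D)$ (a $B_1(\D)$ weight satisfies $\mathrm{M}_Q w\leq Cw$ on $Q$, whence $w^{-1}\leq C(\vint_{Q}w)^{-1}$ on $Q$). I would establish the two implications independently.

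For (a)$\Rightarrow$(b), put $\sigma=w^{-1/(q-1)}$, so $w=\sigma^{-(q-1)}$. Hölder's inequality gives $|E|\leq w(E)^{1/q}\sigma(E)^{(q-1)/q}$ for every $E\subseteq Q$, and combined with the $\Bq(\D)$ condition this yields $|E|/|Q|\leq[w]_{\Bq}^{1/q}(w(E)/w(Q))^{1/q}$, which in particular gives $w\in\Binfty(\D)$ (this realizes the defining quantitative absolute continuity with explicit constants). For \eqref{eq:Lpconditionminimal}: for every dyadic $Q'\subseteq Q$ containing $x$ the $\Bq(\D)$ inequality gives $(\vint_{Q'}w)^{-1}\geq[w]_{\Bq}^{-1}(\vint_{Q'}\sigma)^{q-1}$, and taking the supremum over such $Q'$ produces the pointwise bound $(\mathrm{m}_Q w)^{-1}\geq[w]_{\Bq}^{-1}(\mathrm{M}_Q\sigma)^{q-1}$ on $Q$. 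On the other hand, for any $0<r\leq1$, Jensen's inequality together with $\mathrm{M}_Q\sigma\geq\vint_{Q}\sigma$ gives $\int_Q\sigma^r\leq|Q|(\vint_{Q}\sigma)^r\leq\int_Q(\mathrm{M}_Q\sigma)^r$. Taking $r=s(q-1)\leq1$, that is $p=1+1/s\geq q$, and chaining the two estimates,
\[
\int_Q w^{-s}=\int_Q\sigma^{s(q-1)}\leq\int_Q(\mathrm{M}_Q\sigma)^{s(q-1)}\leq[w]_{\Bq}^{s}\int_Q(\mathrm{m}_Q w)^{-s},
\]
which is \eqref{eq:Lpconditionminimal} with $p_0=q$; the integrability $w^{-1/(p_0-1)}=\sigma\in L^1(\D)$ follows from the $\Bq(\D)$ condition applied to a finite cover of $\D$ by Carleson boxes of size comparable to one.

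For (b)$\Rightarrow$(a), I would first extract from $w\in\Binfty(\D)$, possibly after the standard self-improvement of a quantitative absolute continuity estimate, constants $C,\gamma\in(0,1)$ with $|E|/|Q|\leq C(w(E)/w(Q))^{\gamma}$ for all Carleson boxes $Q$ and $E\subseteq Q$. Fix $Q$, write $\bar w=\vint_{Q}w$, and for $0<\mu<\bar w$ let $F_\mu=\{x\in Q:\mathrm{m}_Q w(x)<\mu\}$, which is the union of the maximal dyadic subcubes $Q_j\subseteq Q$ with $\vint_{Q_j}w<\mu$; hence $w(F_\mu)=\sum_j w(Q_j)<\mu|F_\mu|$. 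Inserting $E=F_\mu$ into the power estimate gives $|F_\mu|\leq C^{1/(1-\gamma)}(\mu/\bar w)^{\gamma/(1-\gamma)}|Q|$, while $F_\mu=Q$ for $\mu>\bar w$ because $\mathrm{m}_Q w\leq\bar w$ on $Q$. The layer-cake formula then shows, for every $0<s<\gamma/(1-\gamma)$, that $\int_Q(\mathrm{m}_Q w)^{-s}=s\int_0^{\infty}\mu^{-s-1}|F_\mu|\dd\mu\leq C'|Q|\,\bar w^{-s}$ with $C'$ depending only on $C,\gamma,s$. Choosing $s=\min\{1/(p_0-1),\,\tfrac12\gamma/(1-\gamma)\}$ and applying the inequality in \eqref{eq:Lpconditionminimal} for $p=1+1/s\geq p_0$ (note that only this single value of $p$ is used), one obtains $\int_Q w^{-s}\leq CC'|Q|(\vint_{Q}w)^{-s}$ uniformly in $Q$; this is exactly $w\in\Bq(\D)$ for $q=1+1/s\in(1,\infty)$, with the defining integrability $w^{-1/(q-1)}=w^{-s}\in L^1(\D)$ guaranteed by $s\leq1/(p_0-1)$, $w^{-1/(p_0-1)}\in L^1(\D)$, and $|\D|<\infty$.

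The hard part is the (b)$\Rightarrow$(a) direction, and within it the passage from the (weak) $\Binfty(\D)$ hypothesis to the scale-invariant power estimate $|E|\lesssim(w(E)/w(Q))^{\gamma}|Q|$: although Lebesgue measure and the Carleson-box filtration are doubling, $w$ itself need not be doubling on Carleson boxes, so the Calder\'{o}n--Zygmund iteration behind the self-improvement must be arranged with some care (this step presumably appears, or is cited, in Section~\ref{sec:prelim}). Once that estimate is available, the remaining ingredients — Jensen's inequality, the B\'ekoll\'e--Bonami inequality on subcubes, the stopping-cube description of the sublevel sets of $\mathrm{m}_Q w$, and the layer-cake formula — are routine, and the only genuine bookkeeping is tracking how the admissible range of $s$, hence of $q$, depends on $\gamma$ and $p_0$.
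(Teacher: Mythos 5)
Your (a)$\Rightarrow$(b) direction is essentially correct and close to the paper's own route: the pointwise bound $\mathrm{m}_Q w \leq w_Q$ plus Jensen is exactly the mechanism of Lemma~\ref{lem:BpimpliesLpconditionminimal}, and your H\"older/duality argument for $\Bq(\D)\subset\Binfty(\D)$ is a fine substitute for the paper's chain $\Bq\subset\Blog\subset\Binfty$ (note only that your inequality $|E|/|Q|\leq C\,(w(E)/w(Q))^{1/q}$ yields \eqref{eq:defBifnty} after passing to complements inside $Q$, and that the integrability of $w^{-1/(q-1)}$ should be obtained by exhausting $\D$ by Carleson boxes $Q_{I_n}$ with $|I_n|\to 1$, since no Carleson box reaches the origin; both are cosmetic).

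The (b)$\Rightarrow$(a) direction, however, has a genuine gap at precisely the step you flag as the hard part: the claim that \eqref{eq:defBifnty} self-improves to a power estimate $|E|/|Q|\leq C\,(w(E)/w(Q))^{\gamma}$ valid for \emph{all} measurable $E\subset Q$. This is false for the Carleson-square basis, and no such statement appears in Section~\ref{sec:prelim}. A counterexample: take pairwise disjoint arcs $I_k$, sets $S_k\subset T_{I_k}$ with $|S_k|=\tfrac{1}{34}|Q_{I_k}|$, and let $w=\epsilon_k$ on $S_k$, $w=1$ elsewhere, with $\epsilon_k\to 0$. A box $Q_I$ can meet $S_k$ only if $|I_k|<2|I|$, and at most two of the relevant $I_k$ are not contained in $I$; summing, $|Q_I\cap\bigcup_k S_k|\leq\tfrac12|Q_I|$, so $w(Q_I)\geq\tfrac12|Q_I|$ while $w\leq 1$, and \eqref{eq:defBifnty} holds with $\alpha=1/4$, $\beta=1/2$ (this $w$ is even doubling). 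Yet for $Q=Q_{I_k}$ and $E=S_k$ one has $|E|/|Q|=1/34$ while $w(E)/w(Q)\leq\tfrac{2}{34}\epsilon_k\to 0$, so no estimate $|E|/|Q|\leq C(w(E)/w(Q))^\gamma$ can hold. The ``standard self-improvement'' you invoke relies on a differentiation property of the basis (essentially the pointwise bound \eqref{eq:pointwise}, or its analogue for the measure $w\,\mathrm{d}x$), which is exactly what fails here. The paper's proof takes a different route: Lemma~\ref{lem:BinftyimpliesEstimateMinimal} uses a Calder\'on--Zygmund decomposition of $w^{-1}$ with respect to the doubling measure $w\,\mathrm{d}x$ (doubling supplied by Lemma~\ref{lem:BinftyimpliesFWanddoubling}) and the identity $\mathrm{M}^{\mu}_Q(w^{-1})=1/\mathrm{m}_Q w$ to prove the weak-type bound $|Q\cap\lbrace 1/\mathrm{m}_Q w>\lambda\rbrace|\lesssim \lambda\, w\left(Q\cap\lbrace 1/w>\lambda\alpha\rbrace\right)$; then, in Lemma~\ref{lem:BinftyandExtraconditionimpliesBpBlog}, the layer-cake formula produces the term $\frac{C\varepsilon}{(1+\varepsilon)\alpha^{1+\varepsilon}}\int_Q w^{-\varepsilon}$, which is absorbed for small $\varepsilon$ using the a priori finiteness $w^{-\varepsilon}\in L^1(\D)$ built into \eqref{eq:Lpconditionminimal}. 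In other words, the side condition enters the heart of the iteration; it cannot be used only once at the end after a purely-$\Binfty$ estimate of the sweeping form you postulate. Your stopping-cube skeleton could be salvaged, but only by iterating \eqref{eq:defBifnty} across the stopping cubes themselves: on a maximal cube $Q_j$ with $w_{Q_j}<\mu$ one has $\mathrm{m}_Q w=\mathrm{m}_{Q_j}w$ and, via the dyadic parent and doubling, $w(Q_j)\gtrsim\mu|Q_j|$, which lets the complemented form of \eqref{eq:defBifnty} shrink $|\lbrace \mathrm{m}_Q w<\theta\mu\rbrace|$ by a fixed factor at each step --- a genuinely different argument from the all-subsets power bound you wrote down.
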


\begin{theorem}\label{thm:maintheoremBlog}
The following are equivalent.
\begin{itemize}
    \item[(a)] $w$ is a $\Blog(\D)$ weight.  
    \item[(b)] $w$ belongs to $\Binfty(\D)$ and satisfies condition \eqref{eq:logconditionminimal}.
\end{itemize}
\end{theorem}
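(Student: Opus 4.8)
The plan is to prove the two implications separately. For (a)$\Rightarrow$(b) the two conclusions have rather different flavours. That $\Blog(\D)$ implies \eqref{eq:logconditionminimal} should be essentially immediate: applying the defining inequality $\dashint_Q w\le C\exp(\dashint_Q\log w)$ with $Q=\D$ gives $\int_\D\log w>-\infty$, and the trivial pointwise bound $\mathrm{m}_Q w(x)\le\dashint_Q w$ --- valid because $Q$ itself competes in the infimum defining $\mathrm{m}_Q w$ --- yields, after taking logarithms and integrating over $Q$,
\[
\int_Q\log(\mathrm{m}_Q w)\le |Q|\log\big(\dashint_Q w\big)\le |Q|\big(\log C+\dashint_Q\log w\big)=\int_Q\log(Cw),
\]
which is exactly \eqref{eq:logconditionminimal}. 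The inclusion $\Blog(\D)\subseteq\Binfty(\D)$ is the Carleson-box analogue of the classical fact that the reverse Jensen inequality forces quantitative absolute continuity: for $E\subseteq Q$ one splits $\dashint_Q\log w$ over $E$ and $Q\setminus E$, applies Jensen's inequality on each piece, and compares with $\Blog(\D)$ to obtain a quantitative comparison between $w(E)/w(Q)$ and $|E|/|Q|$. This uses only Jensen's inequality on subsets of a fixed box, so it transfers without change (upgrading, if necessary, to the quantitative form of $\Binfty(\D)$ via its self-improvement).

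The substance of the theorem is (b)$\Rightarrow$(a). Fix a Carleson box $Q$. By \eqref{eq:logconditionminimal} one has $\exp(\dashint_Q\log(\mathrm{m}_Q w))\le C\exp(\dashint_Q\log w)$, so it suffices to prove the companion inequality pointing the other way, a reverse Jensen inequality for the minimal function,
\[
\dashint_Q w\le C''\exp\big(\dashint_Q\log(\mathrm{m}_Q w)\big),
\]
with $C''$ controlled only by the $\Binfty(\D)$ constants of $w$; combining the two over all $Q$ then yields $w\in\Blog(\D)$. Setting $g=\mathrm{m}_Q w/\dashint_Q w\in(0,1]$, this amounts to the bound $\dashint_Q\log(1/g)\le\log C''$. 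Here I would argue as follows. For $0<\lambda<1$ the sublevel set $\{x\in Q:g(x)<\lambda\}$ is, by a stopping-time argument, the disjoint union of the maximal dyadic subboxes $Q_j\subsetneq Q$ with $\dashint_{Q_j}w<\lambda\dashint_Q w$, so that
\[
w\big(\{x\in Q:g(x)<\lambda\}\big)=\sum_j w(Q_j)<\lambda\dashint_Q w\sum_j|Q_j|\le\lambda\,w(Q).
\]
The $\Binfty(\D)$ condition then converts this smallness in $w$-measure into smallness in Lebesgue measure, $|\{x\in Q:g(x)<\lambda\}|\le C_0\lambda^{\gamma}|Q|$ for some $\gamma>0$, and a layer-cake integration closes the argument:
\[
\dashint_Q\log(1/g)=\int_0^1\frac{|\{x\in Q:g(x)<\lambda\}|}{|Q|}\,\frac{\dd\lambda}{\lambda}\le C_0\int_0^1\lambda^{\gamma-1}\,\dd\lambda=\frac{C_0}{\gamma}.
\]

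The step I expect to be the main obstacle is precisely the passage from $w$-measure to Lebesgue measure with a rate integrable against $\dd\lambda/\lambda$ near $0$: the distributional estimate $w(\{g<\lambda\})\le\lambda\,w(Q)$ combined with merely the weak form of $\Binfty(\D)$ gives only $|\{g<\lambda\}|\lesssim|Q|$ for small $\lambda$, a bounded non-decaying rate for which the layer-cake integral diverges. One therefore genuinely needs the quantitative (power-type) form of $\Binfty(\D)$, so I expect the proof to rest on the self-improvement of $\Binfty(\D)$ for Carleson boxes, equivalently on its characterization in Theorem~\ref{thm:maintheoremBinftyiffFW} through the Fujii--Wilson condition and \eqref{eq:conditionforFWiffBinfty}. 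A minor technical point is checking that $\{g<\lambda\}$ really is the stated union of maximal subboxes, which is routine since the dyadic Carleson boxes through a point form a finite nested chain. Finally, the whole scheme should run in close parallel with the proof of Theorem~\ref{thm:maintheoremBp}: replacing $L^p$-averages there by logarithms turns \eqref{eq:Lpconditionminimal} into \eqref{eq:logconditionminimal} and the reverse H\"older estimate for $\mathrm{m}_Q w$ into the reverse Jensen estimate above --- with the simplification that here the exponent $\gamma$ need only be positive, rather than having to beat $1/(p_0-1)$, so no largeness of $q$ enters.
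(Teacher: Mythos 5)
Your reduction is the same as the paper's: the (a)$\Rightarrow$(b) part (pointwise bound $\mathrm{m}_Q w\le w_Q$ plus Jensen, and $\Blog\subset\Binfty$ via Hru\v{s}\v{c}\"ev's argument) matches Lemmas~\ref{lem:BpimpliesBinfty} and \ref{lem:Blogimplieslogminimal}, and for (b)$\Rightarrow$(a) you, like the paper, use \eqref{eq:logconditionminimal} only to pass from $w$ to $\mathrm{m}_Q w$ and then try to prove $\dashint_Q\log\bigl(w_Q/\mathrm{m}_Q w\bigr)\le C$ from $\Binfty$ alone. The gap is in how you prove that last estimate. Your stopping-time bound $w(\{\mathrm{m}_Q w<\lambda w_Q\})\le\lambda w(Q)$ is fine, but the conversion step --- ``$\Binfty$ converts smallness in $w$-measure into $|E|\le C\lambda^{\gamma}|Q|$'' --- is exactly what \emph{cannot} be invoked as a self-improvement in this basis. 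For general measurable $E\subset Q$, a power-type estimate $|E|/|Q|\le C\bigl(w(E)/w(Q)\bigr)^{\gamma}$ applied to $E=Q\cap\{w<\lambda w_Q\}$ yields $\vint_{Q} (w_Q/w)^{\varepsilon}\lesssim 1$ for small $\varepsilon$, i.e.\ $w\in\Bq(\D)$ for some finite $q$; since $\Binfty(\D)$ does not imply $\Bq(\D)$ without the side condition \eqref{eq:Lpconditionminimal} (this is the content of Theorem~\ref{thm:maintheoremBp} and the reason the side condition is there), no such self-improvement of \eqref{eq:defBifnty} exists. Nor does your fallback pointer to Theorem~\ref{thm:maintheoremBinftyiffFW} help: the Fujii--Wilson condition and \eqref{eq:conditionforFWiffBinfty} control the \emph{maximal} function $\mathrm{M}_Q w$, and give no route to decay of sublevel sets of the \emph{minimal} function. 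So as written, the crucial step you yourself flag as ``the main obstacle'' is left resting on a principle that is false in the stated generality.

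Two remarks on repair. First, the decay you want is in fact true for the special sets $E_\lambda=\{\mathrm{m}_Q w<\lambda w_Q\}$, but it needs a different argument than ``self-improvement'': from \eqref{eq:defBifnty} with constants $\alpha,\beta$ one gets the single-scale dual statement that $w(E)\le\tfrac{1-\beta}{2}\,w(Q)$ forces $|E|\le(1-\alpha)|Q|$ (pass to the complement $Q\setminus E$), and then one iterates over the maximal stopping boxes, using that $\Binfty$ weights are doubling (Lemma~\ref{lem:BinftyimpliesFWanddoubling}) to compare $w_{Q_j}$ with $\lambda w_Q$ at each generation; this produces $|E_\lambda|\lesssim\lambda^{\gamma}|Q|$ and closes your layer-cake computation. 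Second, the paper's own proof avoids any decay whatsoever: Lemma~\ref{lem:BinftyimpliesEstimateMinimal} (a Calder\'on--Zygmund decomposition of $w^{-1}$ with respect to the doubling measure $w\,\mathrm{d}x$, combined with Lemma~\ref{lem:equivalencBinftysuperlevelsets}) gives
\[
\bigl|Q\cap\{1/\mathrm{m}_Q w>t\}\bigr|\le \frac{C(w)\,t}{1-\beta}\,w\bigl(Q\cap\{1/w>\alpha t\}\bigr),\qquad t>1/w_Q,
\]
and the factor $t$ cancels the $1/t$ in the layer-cake formula, so that
\[
\int_Q\log\Bigl(\frac{w_Q}{\mathrm{m}_Q w}\Bigr)\le\frac{C(w)}{(1-\beta)\alpha}\int_Q\frac{1}{w}\,w\,\mathrm{d}x=\frac{C(w)}{(1-\beta)\alpha}\,|Q|,
\]
with no rate of decay needed at all. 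Your scheme is salvageable along the first route, but as submitted the key estimate is unproved and its claimed justification is not valid in the B\'ekoll\'e--Bonami setting.
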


Combining Theorems \ref{thm:maintheoremBinftyiffRHI} and \ref{thm:maintheoremBp} with the fact that every $\Bq$ weight satisfies the Fujii-Wilson condition and that the reverse H\"older inequality implies the $\Binfty$ property, we also obtain the following result. 
 
\begin{corollary}\label{cor:RHIiffBq}
The following are equivalent.
\begin{itemize}
    \item[(a)] $w$ satisfies a reverse H\"older inequality \eqref{eq:definitionofRHI} and property \eqref{eq:Lpconditionminimal}. 
    \item[(b)] $w \in \Bq(\D)$ for some $1 \leq q < \infty$ and $w$ satisfies property \eqref{eq:Lpconditionmaximal}. 
\end{itemize}
\end{corollary}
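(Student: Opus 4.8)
The plan is to deduce the corollary directly from Theorems~\ref{thm:maintheoremBinftyiffRHI} and \ref{thm:maintheoremBp}, using only the two facts recalled in the text: that every $\Bq(\D)$ weight satisfies the Fujii-Wilson condition \eqref{eq:definitionofFujiiWilson}, and that the reverse H\"older inequality \eqref{eq:definitionofRHI} implies membership in $\Binfty(\D)$. No new estimates are needed; the argument is simply a chase through the equivalences already proved.

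To prove (a) $\Rightarrow$ (b), suppose $w$ satisfies \eqref{eq:definitionofRHI} and \eqref{eq:Lpconditionminimal}. Since the reverse H\"older inequality implies $w \in \Binfty(\D)$, the pair of properties ``$w \in \Binfty(\D)$'' and \eqref{eq:Lpconditionminimal} is exactly part (b) of Theorem~\ref{thm:maintheoremBp}, so $w \in \Bq(\D)$ for some $1 \leq q < \infty$. It remains to produce \eqref{eq:Lpconditionmaximal}. By Theorem~\ref{thm:maintheoremBinftyiffRHI}, the reverse H\"older inequality implies that $w$ satisfies both the Fujii-Wilson condition and \eqref{eq:Lpconditionmaximal}; in particular \eqref{eq:Lpconditionmaximal} holds, and (b) follows.

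For (b) $\Rightarrow$ (a), suppose $w \in \Bq(\D)$ for some $1 \leq q < \infty$ and $w$ satisfies \eqref{eq:Lpconditionmaximal}. Because every $\Bq(\D)$ weight satisfies the Fujii-Wilson condition, $w$ satisfies the Fujii-Wilson condition together with \eqref{eq:Lpconditionmaximal}, so Theorem~\ref{thm:maintheoremBinftyiffRHI} gives the reverse H\"older inequality \eqref{eq:definitionofRHI}. On the other hand, applying Theorem~\ref{thm:maintheoremBp} to $w \in \Bq(\D)$ shows that $w \in \Binfty(\D)$ and that $w$ satisfies \eqref{eq:Lpconditionminimal}; in particular \eqref{eq:Lpconditionminimal} holds. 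Hence (a) follows, completing the proof.

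Since each step is a direct logical consequence of results already established, there is essentially no obstacle here; the only point that requires care is bookkeeping — keeping track of which implication of each (two-sided) theorem is being invoked, and observing that the quantifier ``for some $1 \leq q < \infty$'' is carried along unchanged through both directions.
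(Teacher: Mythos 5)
Your proof is correct and follows essentially the same route as the paper: the corollary is obtained there by combining Theorems~\ref{thm:maintheoremBinftyiffRHI} and \ref{thm:maintheoremBp} with the two recalled facts that every $\Bq(\D)$ weight satisfies the Fujii-Wilson condition and that the reverse H\"older inequality implies $\Binfty(\D)$, exactly as you do. Your bookkeeping of which direction of each equivalence is invoked, and of the quantifier ``for some $1 \leq q < \infty$'', is accurate, so nothing further is needed.
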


The paper is organized as follows. In Section \ref{sec:prelim} we recall some elementary definitions and various classes of weights and maximal and minimal operators that have been previously considered in the literature. Section \ref{sec:preparatorylemmas} contains various lemmata adapted to the setting of the Carleson square basis, providing alternative arguments and slight improvements in some cases.  In Section \ref{sec:proofsmainresults} we prove Theorems \ref{thm:maintheoremBinftyiffFW}--\ref{thm:maintheoremBlog}. In Section~\ref{sec:examples} we also characterize the logarithms of $\Bp$-weights in terms of bounded mean oscillation. Additionally, we provide two examples of weights that showcase the optimality of our theorems.

\textbf{Notation.} As usual, for any two nonnegative quantities $A$ and $B$ depending on a number of parameters, we write that $A\lesssim B$ or $B \gtrsim A$ if there exists a constant $C\in \left(0,\infty\right)$ such that $A\leq CB$. Furthermore, we write $A\approx B$ whenever there exist constants $C_1, C_2\in \left(0,\infty\right)$ such that $C_1A\leq B \leq C_2A$. This notation is used where the exact magnitude of the constants is not of interest. We indicate the dependence on a variable $x$ of the implied constant with a subscript, $A \lesssim_{x} B$,

\section{Preliminaries and classical conditions} \label{sec:prelim}

Throughout this paper, we consider the unit disc $\D$ equipped with the two-dimensional Lebesgue measure. If $E \subset \D$, the Lebesgue measure of $E$ is denoted by $|E|$. If $|E|>0$ and $f$ is a nonnegative measurable function, we will denote
$$
\vint_E f = \frac{1}{|E|} \int_E f.
$$
Also, in the unit circle $\T$, we consider the normalized Lebesgue measure, so that $|\T|=1$ and for every arc $I \subset \T$, the measure $|I|$ of $I$ is simply the normalized length of $I$ as an arc.

By a \textit{weight} in $\D$, we understand a function $w\in L^1(\D)$ with $0<w(x)<\infty$ for almost every $x\in \D.$ For a measurable set $E \subset \D$, we will frequently use the notation
$$
w(E) : =\int_E w(x) \ud x, \quad w_E:= \vint_E w = \frac{1}{|E|} \int_E w(x)\ud x,
$$
where in the latter definition we naturally assume that $|E|>0.$

The basis of sets over which we will be modeling our conditions for weights is the collection of Carleson squares of the disc. Namely, if $I \subset \T$ in an arc, which can be open, closed, or half-open, the \textit{Carleson box $Q_I$ associated with $I$} is the set
$$
Q_I: = \lbrace z\in \D \, : \, z/|z| \in I \: \text{ and } \: 1-|I| < |z| <1 \rbrace.
$$
A computation shows that
\begin{equation}\label{eq:proportionmeasureannuli}
|Q_I| \leq 4 \left |  Q_{\frac{1}{2}I} \right |, \quad \text{and} \quad \left | Q_I \setminus Q_{(1-\varepsilon) I} \right | \leq 2 \varepsilon |Q_I|, \quad \varepsilon \in (0,1), \quad I \: \text{ arc of } \: \T.
\end{equation}

The \textit{top-half} $T_I$ of the square $Q_I$ is the set
$$
T_{Q_I}:=T_I: =\lbrace z\in \D \, : \, z/|z| \in I \: \text{ and } \: 1-|I| < |z| <1 - |I|/2 \rbrace.
$$
Now, if we fix a half-open interval (or arc) $I$ of $\T,$ we may consider the collection $\mathcal{D}(I)$ of all the right-open \textit{dyadic subarcs of $I$}, which are obtained by recursively bisecting the interval $I$. We also write
$$
\mathcal{D}(Q_I)= \lbrace Q_J \, : \, J \in \mathcal{D}(I) \rbrace.
$$
Observe that if $x\in Q=Q_{I}$, then there exists a unique $J\in \mathcal{D}(I)$ with $x\in T_J.$

In some of our proofs, it will be useful to consider maximal operators with respect to more general measures. In connection with this, let us define doubling measures and doubling weights with respect to Carleson boxes. 

\begin{dfn}[Doubling measures and weights]\label{def:doublingweight}
We say that a measure $\mu$ on $\D$ is doubling when there is some $C>0$ so that 
$$
\mu(Q_I) \leq C \mu  \left( Q_{\frac{1}{2}I}  \right)
$$
for all arcs $I \subset \T.$ Here $\frac{1}{2}I$ denotes that arc of $\T$ with the same center as $I$ and length $|I|/2.$ Note that then there is another constant $C$ so that
\begin{equation}\label{eq:doubmeasureforchildren}
\mu(Q_I) \leq C \mu (Q_{I'}),
\end{equation}
for any arc $I$ and $I' \in \mathcal{D}(I)$ with $|I'| = |I|/2.$

In addition, we say that a weight $w$ in $\D$ is doubling when the corresponding weighted measure $w(x) \ud x$ is doubling on $\D,$ that is, there exists $C>0$ so that
$$
  w(Q_I) \leq C w \left( Q_{\frac{1}{2}I} \right) 
$$
for all arcs $I$ of $\T.$ 
\end{dfn}

In this paper, we will deal precisely with the following maximal operators over Carleson boxes.

\begin{dfn}[Maximal Operators]\label{def:maximaloperator}
Given a function $f \in L^1_{\mathrm{loc}}(\D)$, the maximal function $\M f$ of $f$ is the measurable function
$$
\mathrm{M}f (x):= \sup_{I \subset \T \text{ arc } } \Chi_{Q_I}(x) \cdot \vint_{Q_I} |f|, \qquad x\in \D.
$$
Also, for a measure $\mu$ in $\D$, an $f\in L^1_{\mathrm{loc}}(\D, \mu)$, and a Carleson box $Q_I$, the \textit{dyadic} $\mu$-\textit{maximal function with respect to $Q_I$} is the function
$$
\mathrm{M}^\mu_{Q_{I}} f (x):= \sup_{J \in \mathcal{D}(I)} \Chi_{Q_I}(x) \cdot \vint_{Q_J} |f| \ud \mu, \qquad x\in Q_{I}.
$$
When the underlying measure is the Lebesgue measure, we simply write $\mathrm{M}_{Q_I} f.$ Moreover, when no confusion is possible, we will use $\M^\mu_Q$ or $\M_Q$ in place of $\M^\mu_{Q_I}$ or $\M_{Q_I}.$
\end{dfn}

Now, we consider the corresponding dyadic minimal function for Carleson squares. This will allow us to formulate some of our characterizations in a very neat manner. In the setting of cubes in $\R^n$, the non-dyadic minimal function was introduced by D. Cruz-Uribe and C.J. Neugebauer, establishing the corresponding weighted $L^p$-inequalities; see \cite[Theorem 3.1]{CN95}.  

\begin{dfn}[Dyadic Minimal Operator]\label{def:minimaloperators}
For arcs $I$, one can also define the \textit{dyadic minimal} operator:
$$\mathrm{m}_{Q_{I}} f (x):= \inf_{ J \in \mathcal{D}(I)} \Chi_{Q_J}(x) \cdot \vint_{Q_J} |f|, \qquad x\in Q_{I}.
$$
\end{dfn}

\begin{remark}\label{rem:finitenessminimalmaximal}
{\em
If $w$ is a weight in $\D$ and $Q$ is a Carleson square, then
$$
0< \mathrm{m}_Q w (x), \,   \mathrm{M}_Q w (x) < \infty, \quad \text{for all} \quad x\in Q.
$$
Indeed, if $x\in Q=Q_{I_0},$ there exists a unique $I \in \mathcal{D}(I_0)$ so that $x\in T_I.$ By the properties of the dyadic intervals, there are only finitely many $J \in \mathcal{D}(I_0)$ with $x\in Q_J.$ Therefore 
$$
\mathrm{M}_Q w(x) = \max_{J \in \mathcal{D}(I_0),\, x\in Q_J } \vint_{Q_J} w < \infty
$$
as a maximum of finitely many finite terms. Similarly, using the fact that $\int_E w >0$ for all $E \subset \D$ with $|E| >0,$ one shows that $\mathrm{m}_Q w (x)>0.$
}
\end{remark}

\begin{dfn}[The Fujii-Wilson property and reverse H\"older inequality]\label{def:FWandRHI}
A weight $w$ on $\D$ satisfies the Fujii-Wilson condition when
\begin{equation}\label{eq:definitionofFujiiWilson}
[w]_{\FW}:=\sup \left\lbrace \frac{\int_{Q_I} \mathrm{M}(w \cdot \Chi_{Q_I})}{\int_{Q_I} w} \: : \: I \text{ arc of } \T \right\rbrace < \infty. \tag{$\FW$}    
\end{equation}
On the other hand, we say that $w$ satisfies a reverse H\"older inequality when there is $p>1$ so that
\begin{equation}\label{eq:definitionofRHI}
[w]_{\RHI_p}:=\sup \left\lbrace \frac{ \vint_{Q_I} w^p  }{ \left( \vint_{Q_I} w \right)^p} \: : \: I \text{ arc of } \T \right\rbrace < \infty. \tag{$\mathrm{RHI}$}
\end{equation}
\end{dfn}

It is important to mention that in \cite{APR19}, the class of weights with the Fujii-Wilson property is denoted by $\Binfty.$ Our definition of $\Binfty$ and the rest of the $\Bp$ classes is as follows.

\begin{dfn}[$\Binfty$ and Békollé-Bonami weights]\label{def:BpandBinfty}
Let $w$ be a weight in $\D.$

We say that $w$ is a $\Binfty(\D)$-weight if there are constants $\alpha, \beta \in (0,1)$ so that for every arc $I \subset \T$ and every measurable set $E \subset Q_I,$ one has
 \begin{equation}\label{eq:defBifnty}
|E| \leq \alpha |Q_I| \implies w(E) \leq \beta w(Q_I). \tag{$\Binfty$}
\end{equation}
W say that $w$ is a $\Bp(\D)$-weight, $1< p < \infty,$ if
 \begin{equation}\label{eq:defBp}
    [w]_p:=\sup_{I \text{ arc of } \T} \left( \vint_{Q_I} w \right)  \left( \vint_{Q_I} w^{\frac{1}{1-p}} \right)^{p-1}  < \infty. \tag{$\mathrm{B}_p$}
\end{equation}
We say that $w$ is a $\Blog(\D)$-weight if $\int_{\D} \log w > -\infty$ and
\begin{equation}\label{eq:defBlog}
    [w]_{\log} :=\sup_{I \text{ arc of } \T} \left( \vint_{Q_I} w \right) \exp \left( - \vint_{Q_I} \log w \right)  < \infty, \tag{$\mathrm{B}_{\log}$} 
\end{equation}
where we understand that $\exp(-\infty)=0.$
\end{dfn}

Regarding the classes defined in Definition \ref{def:BpandBinfty}, the following information is well known.

\begin{itemize}
    \item The condition \eqref{eq:defBifnty} is analogous to the $\mathrm{A}_\infty(\R^n)$-condition for cubes considered by C. Fefferman and B. Muckenhoupt \cite{FM74}. 
    \item In the limiting case $p \to 1,$ we get the class $\mathrm{B}_1(\D)$, consisting of those $w$ with
    $$
    \sup_{I \text{ arc of } \T }  \left( \vint_{Q_I} w \right) \cdot \esssup_{Q_I} \left( \frac{1}{w} \right) < \infty.
    $$
One therefore has, by virtue of the Jensen Inequality, that $\mathrm{B}_1(\D) \subset \Bp(\D) \subset \Bq(\D)$ for $1 \leq p \leq q.$
\item Furthermore, Jensen's inequality for the convex function $t \mapsto \exp(-t)$ shows that $\Bp(\D) \subset \Blog(\D) $ for all $1 \leq p < \infty.$ 

\end{itemize}

Less elementary, but still well known, is the inclusion $\Blog(\D) \subset \Binfty(\D).$ 

\begin{lemma}\label{lem:BpimpliesBinfty}
If $w\in \Blog (\D)$ for $1\leq p< \infty,$ then $w\in \Binfty(\D)$ as well.
\end{lemma}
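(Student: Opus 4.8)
The plan is to prove the implication $\Blog(\D) \subset \Binfty(\D)$ by deriving a quantitative absolute continuity estimate from the reverse Jensen inequality, following the classical scheme of Hru\v{s}\v{c}ev \cite{H84} adapted to Carleson boxes. Fix an arc $I \subset \T$, write $Q = Q_I$, and let $E \subset Q$ be measurable with $|E| \leq \alpha |Q|$ for a small $\alpha \in (0,1)$ to be chosen. The goal is to bound $w(E)$ by $\beta\, w(Q)$ for some $\beta < 1$ depending only on $[w]_{\log}$ and $\alpha$.

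First I would normalize so that $\vint_Q \log w = 0$; since $w \in \Blog(\D)$ this is legitimate and it gives $w_Q = \vint_Q w \leq [w]_{\log}$. Next, the key step is a distributional/level-set argument: split $Q = \{w \geq \lambda\} \cup \{w < \lambda\}$ for a threshold $\lambda > 0$ to be optimized, and on the set $\{w \geq \lambda\}$ use that $\log w \geq \log \lambda$ there while $\int_Q \log w = 0$, to control $|\{x \in Q : w(x) \geq \lambda\}|$ from above. Concretely, writing $\log^- t = \max(-\log t, 0)$, one has $0 = \int_Q \log w = \int_Q \log^+ w - \int_Q \log^- w$, and since $\log^+ w \leq w$ pointwise, $\int_Q \log^- w \leq \int_Q w \leq [w]_{\log} |Q|$. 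Chebyshev on $\log^- w$ then bounds $|\{w \leq e^{-s}\}|$ for $s > 0$; equivalently, for large $\lambda$ the set where $w$ is small is not too large, which is the mechanism that forces $w$ to be spread out rather than concentrated. Combined with the trivial bound $w(E \cap \{w \geq \lambda\}) = \int_E w \cdot \Chi_{\{w \geq \lambda\}}$ handled via H\"older or a second application of the logarithmic control, and the bound $w(E \cap \{w < \lambda\}) \leq \lambda |E| \leq \lambda \alpha |Q|$, one assembles
\[
w(E) \leq \lambda \alpha |Q| + w(\{x \in Q : w(x) \geq \lambda\}),
\]
and the second term is made a small fraction of $w(Q)$ by taking $\lambda$ a large multiple of $w_Q$ and using the reverse Jensen inequality once more on the superlevel set (or a standard iteration/Calder\'on--Zygmund stopping-time decomposition adapted to $\mathcal{D}(I)$). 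Choosing $\lambda \approx w_Q/\alpha^{1/2}$, say, and then $\alpha$ small finishes the estimate with $\beta = \beta([w]_{\log}) < 1$.

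The main obstacle I anticipate is controlling the tail contribution $w(\{w \geq \lambda\})$ for large $\lambda$ purely from the single scalar inequality \eqref{eq:defBlog}, since — unlike in the Euclidean $A_\infty$ setting — we have no reverse H\"older inequality available a priori and no pointwise bound $w \lesssim \M_Q w$. The clean way around this is to run a stopping-time argument on the dyadic family $\mathcal{D}(Q_I)$: select maximal dyadic subboxes $Q_j \subset Q$ on which $w_{Q_j} > \lambda$, note $\sum_j |Q_j| \leq \lambda^{-1} w(Q) \leq \lambda^{-1}[w]_{\log}|Q|$ by maximality and the $L^1$ bound, and then apply the reverse Jensen inequality on each $Q_j$ together with Jensen's inequality on the complement to telescope $\int_Q \log w$ down the stopping tree; this yields $w(\cup_j Q_j) \leq \eta\, w(Q)$ with $\eta \to 0$ as $\lambda/w_Q \to \infty$, at a rate governed only by $[w]_{\log}$. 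Outside $\cup_j Q_j$ one has $w \leq \lambda$ by the Lebesgue differentiation theorem applied along the dyadic filtration, so the bound $w(E \setminus \cup_j Q_j) \leq \lambda |E|$ closes the loop. I would present the stopping-time lemma as the technical core and keep the level-set bookkeeping brief.
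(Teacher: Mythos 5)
Your route is genuinely different from the paper's (the paper proves Lemma~\ref{lem:BpimpliesBinfty} by invoking Hru\v{s}\v{c}ev's direct argument, \cite[Theorem 1]{H84}, which is a two-set Jensen computation using no maximal functions at all), but as written your argument has a gap at exactly the point this paper is about. Your assembly $w(E)\le \lambda\alpha|Q|+w(\{x\in Q: w(x)\ge\lambda\})$ requires controlling the $w$-mass of a superlevel set of $w$ itself, whereas the stopping-time construction only controls $\bigcup_j Q_j=\{x\in Q:\mathrm{M}_Q w(x)>\lambda\}$. The bridge you invoke --- ``outside $\bigcup_j Q_j$ one has $w\le\lambda$ by the Lebesgue differentiation theorem applied along the dyadic filtration'' --- fails for the Carleson-square basis: a point $x\in Q_{I_0}$ lies in only finitely many dyadic boxes $Q_J$, $J\in\mathcal{D}(I_0)$ (cf.\ Remark~\ref{rem:finitenessminimalmaximal}), these boxes do not shrink to $x$, and no estimate of the form $w\lesssim \mathrm{M}_Q w$ holds off the stopping boxes; this is precisely the obstruction \eqref{eq:pointwise} motivating the paper (see Example~\ref{ex:BpandRHInotBHO}, where even a weight in $\Bq$ with a reverse H\"older inequality fails $w\cdot\Chi_Q\lesssim \mathrm{M}_Q w$). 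Relatedly, ``using the reverse Jensen inequality once more on the superlevel set'' is not licensed, since \eqref{eq:defBlog} is assumed only on Carleson boxes, while bounding $w(F)$ for arbitrary sets $F$ of small Lebesgue measure is exactly the \eqref{eq:defBifnty} conclusion you are trying to prove; and your telescoping step is not yet an argument: the available estimates $\vint_{Q_j}w\le 4\lambda$ and $\sum_j|Q_j|\le w(Q)/\lambda$ only give $w\bigl(\bigcup_j Q_j\bigr)\le 4\,w(Q)$, not smallness.

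The repair is to drop level sets and maximal functions altogether, which is why the cited proof works for arbitrary bases and measure spaces: for $E\subset Q$ with $t:=|E|/|Q|\le\alpha$, split $\vint_Q\log w=\tfrac{|E|}{|Q|}\vint_E\log w+\tfrac{|Q\setminus E|}{|Q|}\vint_{Q\setminus E}\log w$ and apply Jensen's inequality on each piece, so that
\begin{equation*}
\vint_Q w\;\le\;[w]_{\log}\exp\Bigl(\vint_Q\log w\Bigr)\;\le\;[w]_{\log}\Bigl(\vint_{Q\setminus E}w\Bigr)^{1-t}\Bigl(\vint_E w\Bigr)^{t}.
\end{equation*}
Writing $s=w(E)/w(Q)$ this reads $1\le [w]_{\log}\,(1-s)^{1-t}s^{t}(1-t)^{t-1}t^{-t}$, and since $s^t\le 1$ while $(1-t)^{t-1}t^{-t}$ is bounded for $t\le 1/2$, one gets $1-s\ge c([w]_{\log})>0$, i.e.\ $w(E)\le\beta\,w(Q)$ with $\beta<1$ depending only on $[w]_{\log}$. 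This uses only \eqref{eq:defBlog} on the single box $Q$ plus convexity, with no differentiation or stopping times.
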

\begin{proof}
A very direct proof of this result is in \cite[Theorem 1]{H84} in the setting of cubes in $\R^n$, but its proof is valid for general measure spaces and bases. For a different proof, proving other interesting conditions on the way, see \cite[Theorem 4.1]{DMFO16}.
\end{proof}

We will make use of the following equivalent form of $\mathrm{B}_\infty.$

\begin{lemma}\label{lem:equivalencBinftysuperlevelsets}
A weight $w\in \mathrm{B}_\infty(\D)$ if and only if there are constants $\alpha, \beta \in (0,1)$ such that
$$
w \left( \lbrace x\in Q \, : \, w(x) \geq \frac{1}{\alpha} w_Q \rbrace \right) \leq \beta w(Q),
$$
for all Carleson boxes $Q$.
\end{lemma}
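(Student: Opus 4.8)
The plan is to obtain both implications directly, using Chebyshev's inequality in one direction and a two-piece splitting of the integral in the other; nothing beyond Definition~\ref{def:BpandBinfty} is needed.

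For the implication ``$w \in \Binfty(\D)$ $\Rightarrow$ superlevel estimate'', I would let $\alpha_0, \beta_0 \in (0,1)$ be the $\Binfty$-constants of $w$, fix a Carleson box $Q$, and consider the set $E := \{x \in Q : w(x) \ge \alpha_0^{-1} w_Q\}$. Chebyshev's inequality gives $\alpha_0^{-1} w_Q |E| \le \int_E w \le w(Q) = w_Q |Q|$, hence $|E| \le \alpha_0 |Q|$, and then \eqref{eq:defBifnty} applied to this particular $E$ yields $w(E) \le \beta_0 w(Q)$. So the superlevel estimate holds with $\alpha = \alpha_0$ and $\beta = \beta_0$.

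For the converse, assume the superlevel estimate holds with some $\alpha, \beta \in (0,1)$, let $Q$ be a Carleson box, and take a measurable $E \subset Q$ with $|E| \le \alpha' |Q|$ for a small $\alpha' \in (0,1)$ to be chosen. Splitting $E$ according to whether $w < \alpha^{-1} w_Q$ or $w \ge \alpha^{-1} w_Q$ on it, the first piece contributes at most $\alpha^{-1} w_Q |E| \le (\alpha'/\alpha)\, w(Q)$ and the second contributes at most $w(\{x \in Q : w(x) \ge \alpha^{-1} w_Q\}) \le \beta\, w(Q)$ by hypothesis, so that $w(E) \le (\alpha'/\alpha + \beta)\, w(Q)$. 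Taking $\alpha' := \alpha(1-\beta)/2$, which lies in $(0,1)$, gives $w(E) \le \beta'\, w(Q)$ with $\beta' := (1+\beta)/2 \in (0,1)$, i.e.\ exactly \eqref{eq:defBifnty}.

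I do not expect a genuine obstacle: the argument is short, and the only thing to watch is that the constants produced in the converse direction stay in $(0,1)$, which the explicit choice above guarantees (recall also that $0 < w_Q < \infty$ and $0 < w(Q) < \infty$ since $w$ is a weight, so the Chebyshev step is legitimate). If a quantitative version were desired one could record the mutual dependence of $(\alpha,\beta)$ and $(\alpha_0,\beta_0)$, but this is not needed for the applications.
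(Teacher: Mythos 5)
Your proof is correct in both directions: the forward implication is exactly the Chebyshev observation $|E| \le \alpha_0 |Q|$ for $E=\{w \ge \alpha_0^{-1} w_Q\}$ followed by an application of \eqref{eq:defBifnty}, and the converse splitting $w(E) \le \alpha^{-1} w_Q |E| + w(\{w \ge \alpha^{-1} w_Q\}) \le (\alpha'/\alpha + \beta)\,w(Q)$ with $\alpha' = \alpha(1-\beta)/2$ does produce admissible constants $\alpha', \beta' = (1+\beta)/2 \in (0,1)$. The only difference from the paper is that the paper does not argue this at all: it simply cites \cite[Theorem 3.1]{DMFO16}, where the equivalence is proved for general measure spaces and bases. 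Your argument is essentially the standard one behind that reference, specialized to the Carleson-square basis, and it has the minor advantage of being self-contained and of making the dependence of the new constants on the old ones explicit (in the forward direction one can even keep $(\alpha,\beta)=(\alpha_0,\beta_0)$ unchanged); nothing about the Carleson geometry is used, so it works verbatim for any basis, matching the generality of the cited result.
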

\begin{proof}
This is well known and a proof can be found, for instance, in \cite[Theorem 3.1]{DMFO16} for general measure spaces and bases.    
\end{proof}

Concerning the connection between $\mathrm{B}_p$ weights and the reverse H\"older inequality \eqref{eq:definitionofRHI}, perhaps the only easy implication is given by the following lemma, based on a direct application of the classical H\"older inequality. We have chosen to include it for the sake of completeness. 

\begin{lemma}\label{lem:RHIimpliesBinftyanddoub}
If $w$ satisfies a reverse H\"older inequality, then $w\in \Binfty(\D)$.  
\end{lemma}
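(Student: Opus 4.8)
The plan is to derive the $\Binfty$ condition \eqref{eq:defBifnty} directly from the reverse Hölder inequality \eqref{eq:definitionofRHI} via a single application of Hölder's inequality, estimating $w(E)$ for a set $E \subset Q_I$ by a power of the measure ratio $|E|/|Q_I|$ times $w(Q_I)$. Fix an arc $I \subset \T$, write $Q = Q_I$, and let $p > 1$ be the exponent for which $[w]_{\RHI_p} < \infty$. Let $p'$ be the conjugate exponent.

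First I would take a measurable set $E \subset Q$ and apply Hölder's inequality with exponents $p$ and $p'$ to the product $w \cdot \Chi_E$:
\[
w(E) = \int_E w = \int_Q w \, \Chi_E \leq \left( \int_Q w^p \right)^{1/p} |E|^{1/p'}.
\]
Next I would invoke \eqref{eq:definitionofRHI} in the form $\int_Q w^p \leq [w]_{\RHI_p} |Q|^{1-p} \left( \int_Q w \right)^p = [w]_{\RHI_p} |Q|\, (w_Q)^p$, so that
\[
w(E) \leq [w]_{\RHI_p}^{1/p} |Q|^{1/p} w_Q \, |E|^{1/p'} = [w]_{\RHI_p}^{1/p} \left( \frac{|E|}{|Q|} \right)^{1/p'} w(Q),
\]
using $|Q|^{1/p} w_Q |Q|^{1/p'} = w_Q |Q| = w(Q)$. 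Thus if $|E| \leq \alpha |Q|$ for some $\alpha \in (0,1)$, then $w(E) \leq [w]_{\RHI_p}^{1/p} \alpha^{1/p'} w(Q)$. It then remains to choose $\alpha \in (0,1)$ small enough that $\beta := [w]_{\RHI_p}^{1/p} \alpha^{1/p'} < 1$; since $1/p' > 0$, such a choice is always possible, and this verifies \eqref{eq:defBifnty} with constants $\alpha, \beta \in (0,1)$, proving $w \in \Binfty(\D)$.

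There is essentially no obstacle here: the only minor point to be careful about is the bookkeeping of exponents of $|Q|$ (checking that $1/p + 1/p' = 1$ makes the powers of $|Q|$ combine to give exactly $w(Q)$ rather than a stray factor), and noting that the argument uses only the defining supremum in \eqref{eq:definitionofRHI} over the single basis of Carleson squares, so no pointwise estimate of the type \eqref{eq:pointwise} is needed. The doubling conclusion alluded to in the lemma's label is not part of the stated conclusion, so I will not address it.
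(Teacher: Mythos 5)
Your proof is correct and follows essentially the same route as the paper's: a single application of H\"older's inequality with exponents $p,p'$ followed by the reverse H\"older estimate \eqref{eq:definitionofRHI}, giving $w(E)\leq [w]_{\RHI_p}^{1/p}\left(|E|/|Q|\right)^{1/p'}w(Q)$ and hence \eqref{eq:defBifnty} by choosing $\alpha$ small. The exponent bookkeeping is right, and your remark that the doubling hinted at in the label is not part of the stated conclusion is also fine.
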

\begin{proof}
Let $p>1$ the exponent for which $w$ satisfies the \eqref{eq:definitionofRHI}. Also, let $I \subset \T$ be an arc, and $E \subset Q_I$ a measurable set. Applying first the classical H\"older inequality and then \eqref{eq:definitionofRHI}, we get
$$
\frac{w(E)}{|Q_I|} = \vint_{Q_I} w \cdot \Chi_{ E} \leq \left( \vint_{Q_I} w^p \right)^{1/p} \left( \frac{| E|}{|Q_I|} \right)^{1/p'} \leq [w]_{\mathrm{RHI}_p}^{1/p} \left( \vint_{Q_I} w\right)  \left( \frac{|  E|}{|Q_I|} \right)^{1/p'},
$$
which, after simplifying leads to
$$
\frac{w(E)}{w(Q_I)} \leq [w]_{\mathrm{RHI}_p}^{1/p}  \left( \frac{|  E|}{|Q_I|} \right)^{1/p'}.
$$
It is then immediately seen that $w$ satisfies the \eqref{eq:defBifnty} condition. 
\end{proof}

\section{Preparatory lemmas}\label{sec:preparatorylemmas}

\subsection{Calderón-Zygmund decompositions}

We will make frequent use of Calderón-Zygmund decompositions for the dyadic maximal function with respect to Carleson squares.  In this setting, the Calderón-Zygmund squares $\lbrace Q_j^\lambda \rbrace$ of a function $f \geq 0$ over a cube $Q$ provide an exact partition of the set $Q \cap \lbrace \mathrm{M}_Q f > \lambda \rbrace.$ Unlike the classical setting, we cannot necessarily relate this set to the distributional set $Q \cap \lbrace f > \lambda \rbrace,$ due to the falsity of estimates of the type $f \lesssim \mathrm{M} f$.

We begin with an elementary observation. 
\begin{lemma}\label{lem:maximalsubcollections}
Let $I_0 \subset \T,$ and $\mathcal{F}  \subset \mathcal{D}(I_0)$ a subcollection. Let $\mathcal{F}^*$ be the collection of maximal arcs of $\mathcal{F}:$ those $I \in \mathcal{F}$ that are not strictly contained in any larger $I'\in \mathcal{F}.$ Then
\begin{itemize}
\item Every $I\in \mathcal{F}$ is contained in a unique $I^* \in \mathcal{F}^*.$ And so, every $Q_I$, $I\in \mathcal{F},$ is contained in a unique $Q_{I^*},$ $I^* \in \mathcal{F}^*.$
\item $\bigcup_{I\in \mathcal{F}} I = \bigcup_{I^*\in \mathcal{F}^*} I^*$ and $\bigcup_{I\in \mathcal{F}} Q_I = \bigcup_{I^*\in \mathcal{F}^*} Q_{I^*}.$
\item The intervals of $\mathcal{F}^*$ are mutually disjoint. Thus, $\lbrace Q_{I^*} \rbrace_{I^*\in \mathcal{F}^*}$ are mutually disjoint. 
\end{itemize}  

\end{lemma}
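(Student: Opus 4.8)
The statement to prove is Lemma~\ref{lem:maximalsubcollections}, which is a purely combinatorial fact about dyadic arcs. The plan is as follows.

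\textbf{Setup and the key tree property.} First I would recall the fundamental nesting property of dyadic arcs: if $I, I' \in \mathcal{D}(I_0)$ and $I \cap I' \neq \emptyset$, then either $I \subset I'$ or $I' \subset I$. This is immediate from the recursive bisection construction, since two dyadic arcs at the same generation are either equal or disjoint, and any dyadic arc is contained in exactly one dyadic arc of each coarser generation. I would also note that each $I \in \mathcal{D}(I_0)$ has only finitely many dyadic ancestors in $\mathcal{D}(I_0)$ (namely $I_0$ and the arcs of each generation between that of $I_0$ and that of $I$), so among the arcs in $\mathcal{F}$ containing a given $I$, there is always a largest one.

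\textbf{Proof of the three bullet points.} For the first bullet: given $I \in \mathcal{F}$, the set of $I' \in \mathcal{F}$ with $I \subseteq I'$ is nonempty (it contains $I$) and finite, hence has a maximal element $I^*$ with respect to inclusion; by the tree property any two arcs in $\mathcal{F}$ containing $I$ are nested, so the maximal element is unique, and $I^* \in \mathcal{F}^*$ since any arc strictly containing $I^*$ and lying in $\mathcal{F}$ would also contain $I$, contradicting maximality of $I^*$. Uniqueness of $I^*$ as an element of $\mathcal{F}^*$ containing $I$: if $I \subseteq J^*$ with $J^* \in \mathcal{F}^*$, then $J^*$ is one of the arcs in $\mathcal{F}$ containing $I$, so $J^* \subseteq I^*$, and maximality of $J^*$ forces $J^* = I^*$. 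Passing to Carleson boxes is automatic from the monotonicity $I \subseteq I' \implies Q_I \subseteq Q_{I'}$. For the second bullet: the inclusion $\bigcup_{I^* \in \mathcal{F}^*} I^* \subseteq \bigcup_{I \in \mathcal{F}} I$ is trivial since $\mathcal{F}^* \subseteq \mathcal{F}$, and the reverse inclusion follows because each $I \in \mathcal{F}$ satisfies $I \subseteq I^* \subseteq \bigcup_{I^* \in \mathcal{F}^*} I^*$; the statement for Carleson boxes follows by applying $Q_{(\cdot)}$. For the third bullet: if $I^*, J^* \in \mathcal{F}^*$ and $I^* \cap J^* \neq \emptyset$, the tree property gives $I^* \subseteq J^*$ or $J^* \subseteq I^*$; in either case maximality forces $I^* = J^*$, so distinct elements of $\mathcal{F}^*$ are disjoint, and hence so are the corresponding Carleson boxes $Q_{I^*}$ (disjointness of the arcs gives disjointness of the boxes since $Q_I$ is determined by the "shadow" arc $I = \{ z/|z| : z \in Q_I, |z| > 1 - |I|\}$... more simply, $Q_{I^*} \cap Q_{J^*} \neq \emptyset$ would force $z/|z| \in I^* \cap J^*$ for some common point $z$).

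\textbf{Main obstacle.} Honestly there is no serious obstacle here; the lemma is elementary and the only thing to be careful about is invoking finiteness of the ancestor chain to guarantee existence of a maximal element (one could alternatively appeal to the fact that $|I| \geq |I_0| \cdot 2^{-k}$ cannot exceed $|I_0|$, so the generations involved are bounded below, but the cleaner route is just "finitely many ancestors"). If one wanted to avoid finiteness entirely, one could argue that the supremum of $|I'|$ over $I' \in \mathcal{F}$ with $I \subseteq I'$ is attained because dyadic lengths form a discrete set bounded above by $|I_0|$. I would present the finiteness argument as it is the most transparent and matches the style already used in Remark~\ref{rem:finitenessminimalmaximal}.
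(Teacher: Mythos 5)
Your proposal is correct and rests on exactly the same two ingredients as the paper's (very brief) proof: the tree property that two dyadic arcs are either nested or disjoint, and the fact that any increasing chain in $\mathcal{D}(I_0)$ is bounded above by $I_0$ and hence finite, guaranteeing maximal elements. You have simply spelled out the routine details that the paper leaves to the reader; no further comment is needed.
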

\begin{proof}
It follows from the fact that if $I,I'\in \mathcal{D}(I_0),$ then $I \cap I' \in \lbrace \emptyset, I, I' \rbrace$, and that any chain $I_1 \subset I_2 \subset \cdots  $ has $I_0$ as an upper bound, thus stopping after finitely many steps. 
\end{proof}
 
The previous lemma enables us to obtain the Calderón-Zygmund decomposition of a function, with almost all the properties that one has in the setting of cubes in $\R^n$. We include the well known proof for completeness.

\begin{lemma}\label{lem:CZdecomposition}
Let $\mu$ be a doubling measure in $\D$, let $f: \D \to [0, \infty]$ be a nonnegative $L^1(\D, \mu)$ function, let $I \subset \T$ be an arc, denote $Q=Q_I$, and assume $ \vint_{Q} f \ud \mu \leq \lambda >0.$ Then, there is a countable collection of arcs $\lbrace I_j \rbrace_j \subset \mathcal{D}(I)$ such that
\begin{itemize}
\item $\lbrace I_j \rbrace_j$ are pairwise disjoint.
\item For every $j,$
$$
\lambda < \vint_{Q_{I_j}} f \ud \mu \leq C(\mu) \lambda.
$$
\item Any $J \in \mathcal{D}(I)$ that properly contains some $I_j$ satisfies $\vint_{Q_J} f \ud \mu \leq \lambda.$
\item $\lbrace x\in Q \, : \,  \mathrm{M}_{Q}^\mu f (x) > \lambda \rbrace = \bigcup_j Q_{I_j}.$ Consequently,
\begin{equation}\label{eq:estimateCZlevelsets}
\lambda \cdot \mu \left( Q \cap \lbrace \mathrm{M}_Q^\mu f > \lambda  \rbrace \right) \leq  \int_{ Q \cap \lbrace \mathrm{M}_Q^\mu f > \lambda \rbrace} f \leq  C(\mu) \cdot \lambda \cdot \mu \left( Q \cap \lbrace \mathrm{M}_Q^\mu f > \lambda \rbrace \right).    
\end{equation}

\end{itemize}
In the case where $\mu$ is the Lebesgue measure on $\D,$ one can take $C(\mu) = 4.$ 
\end{lemma}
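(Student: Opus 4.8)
The plan is to follow the classical Calderón–Zygmund stopping-time construction, adapted to the dyadic lattice $\mathcal{D}(I)$ of Carleson squares, and to rely on Lemma~\ref{lem:maximalsubcollections} to extract a disjoint family. First I would consider the collection
$$
\mathcal{F} = \left\lbrace J \in \mathcal{D}(I) \, : \, \vint_{Q_J} f \ud \mu > \lambda \right\rbrace,
$$
and let $\lbrace I_j \rbrace_j = \mathcal{F}^*$ be the maximal arcs of $\mathcal{F}$ in the sense of Lemma~\ref{lem:maximalsubcollections}. Since $\vint_Q f \ud\mu \le \lambda$ by hypothesis, $I \notin \mathcal{F}$, so every $J\in\mathcal{F}$ is properly contained in $I$ and hence has a dyadic parent inside $\mathcal{D}(I)$; this is what makes the family of maximal elements well defined (the chain-termination argument of Lemma~\ref{lem:maximalsubcollections}). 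By that lemma the $I_j$ are pairwise disjoint, which is the first bullet.

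Next I would verify the remaining bullets in turn. For the third bullet: if $J\in\mathcal{D}(I)$ properly contains some $I_j$, then by maximality $J\notin\mathcal{F}$, i.e. $\vint_{Q_J} f\ud\mu \le \lambda$. For the upper bound in the second bullet: the dyadic parent $\widehat{I_j}$ of $I_j$ (which lies in $\mathcal{D}(I)$ because $I_j \subsetneq I$) properly contains $I_j$, hence $\vint_{Q_{\widehat{I_j}}} f\ud\mu \le \lambda$; now I invoke the doubling property \eqref{eq:doubmeasureforchildren}, which gives $\mu(Q_{\widehat{I_j}}) \le C(\mu)\,\mu(Q_{I_j})$, and therefore
$$
\vint_{Q_{I_j}} f \ud\mu = \frac{1}{\mu(Q_{I_j})}\int_{Q_{I_j}} f\ud\mu \le \frac{C(\mu)}{\mu(Q_{\widehat{I_j}})}\int_{Q_{\widehat{I_j}}} f\ud\mu = C(\mu)\vint_{Q_{\widehat{I_j}}} f \ud\mu \le C(\mu)\lambda.
$$
The lower bound $\lambda < \vint_{Q_{I_j}} f\ud\mu$ is immediate since $I_j\in\mathcal{F}$.

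For the fourth bullet I would argue both inclusions. If $x\in Q_{I_j}$ for some $j$, then $I_j$ is a dyadic arc in $\mathcal{D}(I)$ with $x\in Q_{I_j}$ realizing an average exceeding $\lambda$, so $\mathrm{M}^\mu_Q f(x) > \lambda$. Conversely, if $x\in Q$ and $\mathrm{M}^\mu_Q f(x) > \lambda$, there is $J\in\mathcal{D}(I)$ with $x\in Q_J$ and $\vint_{Q_J} f\ud\mu > \lambda$, i.e. $J\in\mathcal{F}$; by Lemma~\ref{lem:maximalsubcollections} this $J$ is contained in a unique $I^*\in\mathcal{F}^*$, and then $x\in Q_J\subset Q_{I^*}$, so $x$ lies in the union. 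This establishes the set identity, and \eqref{eq:estimateCZlevelsets} then follows by summing the two-sided bound in the second bullet over the disjoint family: $\lambda\mu(Q_{I_j}) \le \int_{Q_{I_j}} f \le C(\mu)\lambda\mu(Q_{I_j})$, then adding over $j$ and using $\bigsqcup_j Q_{I_j} = Q\cap\{\mathrm{M}^\mu_Q f > \lambda\}$. Finally, for the Lebesgue-measure case, I would note that the first inequality of \eqref{eq:proportionmeasureannuli} gives $|Q_I| \le 4|Q_{\frac12 I}|$, and a direct computation (passing to a child $I'$ with $|I'| = |I|/2$, whose Carleson box is one of two congruent halves of $Q_{\frac12 I}$ together with part of the top half) yields the explicit constant $C(\mu) = 4$; this is the only place a genuine computation is needed. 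I do not anticipate a serious obstacle: the one subtlety to be careful about is that the stopping cubes $I_j$ genuinely have parents inside $\mathcal{D}(I)$, which is guaranteed by the assumption $\vint_Q f\ud\mu \le \lambda$ ruling out $I$ itself from $\mathcal{F}$.
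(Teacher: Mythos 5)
Your proposal is correct and follows essentially the same argument as the paper: stopping-time selection of the maximal arcs of $\mathcal{F}$ via Lemma~\ref{lem:maximalsubcollections}, the upper bound via the dyadic parent and the doubling property \eqref{eq:doubmeasureforchildren}, the level-set identity argued by both inclusions, and the constant $4$ for Lebesgue measure from \eqref{eq:proportionmeasureannuli}. The only cosmetic difference is that the paper explicitly notes the vacuous case $\mathcal{F}=\emptyset$, which you may add for completeness.
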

\begin{proof}
Define $\mathcal{F}:= \lbrace J \in \mathcal{D}(I) \, : \, \vint_{Q_J} f \ud \mu > \lambda \rbrace,$ and assume that $\mathcal{F} \neq \emptyset,$ as otherwise the claim holds vacuously. Consider the maximal arcs $\mathcal{F}^*$ of $\mathcal{F}$ as in Lemma \ref{lem:maximalsubcollections}, and set $\lbrace I_j \rbrace_j:= \mathcal{F}^*$. 
The first property and the first inequality of the second follow immediately. For the upper bound for $\vint_{Q_{I_j}} f \ud \mu,$ note that $I_j$ is strictly contained in $I$ (by the assumption $\vint_Q f \ud \mu \leq \lambda$), and so there is a dyadic parent $J \in \mathcal{D}(I)$ of $I_j,$ which satisfies $\mu(Q_{J}) \leq C(\mu)  \mu (Q_{I_j})$ by the doubling property \eqref{eq:doubmeasureforchildren}. 
Note that \eqref{eq:proportionmeasureannuli} gives $\mu(Q_J)\leq 4 \mu(Q_{I_j})$ when $\mu$ is the Lebesgue measure. 
Moreover, by the maximality of $I_j,$ we must have $\vint_{Q_J} f \ud \mu \leq \lambda,$ and so
$$
\vint_{Q_{I_j}} f \ud \mu \leq \frac{1}{\mu(Q_{I_j})} \int_{Q_J} f \ud \mu \leq  \lambda \frac{\mu(Q_J)}{\mu(Q_{I_j})} \leq C(\mu)  \lambda.
$$
The third claim is a consequence of the maximality of the $I_j$'s. For the last property, note that if $\mathrm{M}_{Q}^\mu f(x) > \lambda$ with $x\in Q_{I}$, then there is $J \in \mathcal{D}(I)$ so that $Q_J$ contains $x$ and $\vint_{Q_J} f \ud \mu > \lambda.$ Thus $Q_J \in \mathcal{F},$ and so it is contained in some $Q_{I_j}$ with $I_j \in \mathcal{F}^*,$ whence $x\in Q_{I_j}.$ The reverse inclusion is a consequence of the second claim. 
\end{proof}

\subsection{The maximal and minimal operators for doubling weights}
\begin{lemma}\label{lem:equivalentmaximalfunctions}
Let $w$ be a weight in $\D$, and $Q_0 := Q_{I_0}$ a Carleson square. Then
$$ 
 \mathrm{M}(w \cdot \Chi_{Q_0}) (x) = \sup_{I  \subset I_0} \Chi_{Q_I}(x) \cdot \vint_{Q_I} w, \quad \quad x\in Q_0.
$$
Furthermore, if $w$ is doubling in $\D,$ then 
$$
 \mathrm{M}(w \cdot \Chi_{Q_0}) (x) \approx  \mathrm{M}_{Q_0} w(x), \quad \quad x\in Q_0.
$$
and also
\begin{equation}\label{eq:comparisondyadicminimalandcontainedminimal}
      \mathrm{m}_{Q_0} w(x) \approx  \inf_{I \subset I_0} \Chi_{Q_I}(x) \cdot \vint_{Q_I} w, \quad x\in Q_0,
\end{equation}
with implied constants depending only on $w$.
\end{lemma}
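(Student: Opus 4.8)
The plan is to prove the three assertions in order, each one following from the basic containments between Carleson squares and their dyadic subsquares.

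For the first identity, the inequality $\mathrm{M}(w\cdot\Chi_{Q_0})(x) \geq \sup_{I\subset I_0} \Chi_{Q_I}(x) \vint_{Q_I} w$ is immediate, since each $Q_I$ with $I \subset I_0$ is a competitor in the supremum defining $\mathrm{M}(w\cdot\Chi_{Q_0})$ and $w\cdot\Chi_{Q_0}$ agrees with $w$ there. For the reverse inequality, take any arc $I \subset \T$ with $x \in Q_I$; since also $x \in Q_0 = Q_{I_0}$, the two Carleson squares $Q_I$ and $Q_{I_0}$ intersect. I will argue that then either $Q_I \subset Q_{I_0}$ (equivalently $I \subset I_0$), which is the good case, or $Q_{I_0} \subsetneq Q_I$, in which case $\vint_{Q_I} w\cdot\Chi_{Q_0} = w(Q_0)/|Q_I| \leq w(Q_0)/|Q_0| = \vint_{Q_0} w$, and $Q_0$ itself ($I = I_0$) is an admissible competitor on the right-hand side; the remaining possibility, that $I$ and $I_0$ overlap without one containing the other, would force $z/|z|$ to lie in $I \cap I_0$ for $z=x$ but then a short geometric check shows $x\in Q_{I\cap I_0}$ with $I\cap I_0 \subset I_0$, so the supremum over $I\subset I_0$ already dominates $\vint_{Q_I}(w\Chi_{Q_0}) \le \vint_{Q_{I\cap I_0}} w$ after adjusting — actually the cleanest route is to note $\vint_{Q_I} w\Chi_{Q_0} \le \vint_{Q_{I\cap I_0}} w \cdot \frac{|Q_{I\cap I_0}|}{|Q_I|}\le \vint_{Q_{I\cap I_0}} w$ whenever $x\in Q_{I\cap I_0}$. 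In all cases the right-hand side dominates, giving equality.

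For the comparison $\mathrm{M}(w\cdot\Chi_{Q_0})(x) \approx \mathrm{M}_{Q_0} w(x)$ when $w$ is doubling: the inequality $\mathrm{M}_{Q_0} w(x) \leq \mathrm{M}(w\cdot\Chi_{Q_0})(x)$ is trivial since the supremum defining $\mathrm{M}_{Q_0}$ runs over the smaller family $\mathcal{D}(I_0) \subset \{I : I \subset I_0\}$. For the converse, using the first identity it suffices to bound $\vint_{Q_I} w$ for an arbitrary arc $I \subset I_0$ with $x \in Q_I$ by a constant times $\vint_{Q_J} w$ for some dyadic $J \in \mathcal{D}(I_0)$ with $x \in Q_J$. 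Here I take $J$ to be the smallest dyadic subarc of $I_0$ with $I \subset J$; such a $J$ exists (e.g.\ $I_0$ works) and satisfies $|J| \leq 2|I|$ because the dyadic parent $J'$ of $J$ inside $I_0$ would have to either equal $I_0$ or fail to contain $I$, and by minimality $|J'| < 2|I|$... more precisely, by the dyadic structure $|J|/2 < |I| \le |J|$ unless $I$ straddles a dyadic break at every scale, in which case one passes to $I_0$ directly. Since $I \subset J$ and the length ratio is bounded, iterating the doubling hypothesis on $w$ a bounded number of times gives $w(Q_J) \lesssim w(Q_I)$; combined with $|Q_I| \lesssim |Q_J|$ (again from \eqref{eq:proportionmeasureannuli} since the lengths are comparable) this yields $\vint_{Q_J} w \lesssim \vint_{Q_I} w$, hence $\vint_{Q_I} w \gtrsim \vint_{Q_J} w$ is false — I want the other direction: $\vint_{Q_I} w = w(Q_I)/|Q_I| \le w(Q_J)/|Q_I| \lesssim w(Q_J)/|Q_J| = \vint_{Q_J} w$, which is exactly what is needed, and $x \in Q_I \subset Q_J$. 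Taking the supremum over $I$ finishes this part.

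For \eqref{eq:comparisondyadicminimalandcontainedminimal}, the argument is the mirror image. The inequality $\mathrm{m}_{Q_0} w(x) \geq \inf_{I \subset I_0} \Chi_{Q_I}(x) \vint_{Q_I} w$ is trivial, since the dyadic infimum runs over a subfamily. For the reverse, given an arbitrary arc $I \subset I_0$ with $x \in Q_I$, I again pass to a dyadic $J \in \mathcal{D}(I_0)$ with $x \in Q_J$ and $|J| \approx |I|$, but now I need $\vint_{Q_J} w \lesssim \vint_{Q_I} w$, i.e.\ $w(Q_J) \lesssim w(Q_I)$ together with $|Q_I| \gtrsim |Q_J|$. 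The cleanest choice is to let $J$ be a dyadic subarc of $I_0$ that is \emph{contained in} $I$, of length comparable to $|I|$, and containing $x$ — such a $J$ exists because $I$, having length $|I| \le |I_0|$, contains a dyadic subarc of $I_0$ of length at least $|I|/4$, and among those we can pick one containing $x$ (or shrink $I$ first to its top-half which is where $x$'s relevant scale lives); then $Q_J \subset Q_I$ gives $w(Q_J) \le w(Q_I)$, while $|Q_I| \lesssim |Q_J|$ follows from \eqref{eq:proportionmeasureannuli} and $|I| \approx |J|$, so $\vint_{Q_J} w = w(Q_J)/|Q_J| \lesssim w(Q_I)/|Q_I| = \vint_{Q_I} w$. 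Note this direction needs no doubling (it is the ``easy'' bound), whereas the matching upper bound $\mathrm{m}_{Q_0}w(x) \lesssim \inf_{I\subset I_0} \cdots$ is the trivial one; doubling is invoked only where a dyadic square must be compared to a larger non-dyadic one of comparable size.

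I expect the main obstacle to be the careful bookkeeping in selecting the dyadic arc $J$ of length comparable to an arbitrary arc $I$ and containing a prescribed point $x$, and verifying the length bound $|J| \approx |I|$ in every configuration — in particular handling the case where $I$ sits across a dyadic boundary at many scales, where one must either shrink $I$ slightly or climb directly to $I_0$. Once that combinatorial lemma is in hand, the doubling hypothesis converts length-comparability into mass-comparability in a uniform number of steps, and the estimates for $\mathrm{M}$ and $\mathrm{m}$ are symmetric.
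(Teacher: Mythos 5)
Your reduction to the first identity and your identification of the two nontrivial directions are fine, but each of the three steps has a concrete gap, and in the two doubling comparisons the gap is exactly the crux of the lemma. In the first identity, the overlap case ($x\in Q_I\cap Q_0$ with $|I|<|I_0|$ and $I\not\subset I_0$) is handled incorrectly: both of your claims, that $x\in Q_{I\cap I_0}$ and that $\int_{Q_I\cap Q_0}w\le\int_{Q_{I\cap I_0}}w$, are false in general, because $Q_{I\cap I_0}$ only reaches down to modulus $1-|I\cap I_0|$, which can be far above $1-|I|$; take $|I\cap I_0|\ll|I|$ and $|x|$ just above $1-|I|$. The fix (used in the paper) is not to intersect but to \emph{slide}: choose an arc $I'\subset I_0$ with $|I'|=|I|$ and $I\cap I_0\subset I'$; then $Q_I\cap Q_0\subset Q_{I'}$, $x\in Q_{I'}$, and $|Q_{I'}|=|Q_I|$, so $\vint_{Q_I}(w\Chi_{Q_0})\le\vint_{Q_{I'}}w$, while the case $|I|\ge|I_0|$ is compared with $Q_0$ itself as you do.

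For the two comparisons under doubling, the straddling case that you flag at the end is not an edge case to be postponed; it is the whole difficulty, and neither of your fallbacks works. If $I$ is a short arc straddling the midpoint of $I_0$, the smallest dyadic arc containing $I$ is $I_0$ itself, and ``climbing to $I_0$'' needs unboundedly many doubling steps, so no uniform constant results. Worse, for the minimal bound a dyadic $J\subset I$ with $x\in Q_J$ need not exist at all: $x\in Q_J$ forces $|J|>1-|x|$, which can be arbitrarily close to $|I|$, while every dyadic subarc of $I$ containing $x/|x|$ has length at most $|I|/2$ when $x/|x|$ is the midpoint of $I_0$. Your remark that this direction ``needs no doubling'' is also false: for a non-doubling weight whose mass is concentrated just to the left of the midpoint of $I_0$ and close to $\T$, every dyadic box containing such an $x$ either misses that mass or is all of $Q_0$, and \eqref{eq:comparisondyadicminimalandcontainedminimal} fails. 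The paper's argument avoids asking for containment in either direction: with $k$ chosen so that $2^{-k-1}|I_0|\le|I|\le 2^{-k}|I_0|$, the arc $I$ meets at most two adjacent dyadic subarcs $I_1,I_2$ of $I_0$ of length $2^{-k}|I_0|$, and one takes $I_1$ to be the one with $x\in Q_{I_1}$ (this always holds since $|I_1|\ge|I|$). Setting $J=I_1\cup I_2$, a bounded number of applications of doubling (as in \eqref{eq:doubmeasureforchildren}) gives $w(Q_I)\le w(Q_J)\lesssim_w w(Q_{I_1})$ for the maximal bound and $w(Q_{I_1})\le w(Q_J)\lesssim_w w(Q_I)$ for the minimal bound, which combined with $|Q_I|\approx|Q_{I_1}|$ from \eqref{eq:proportionmeasureannuli} yields both comparisons with constants depending only on $w$. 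Until you replace ``find a dyadic arc containing (or contained in) $I$ of comparable length'' by this two-adjacent-arcs argument, the second and third assertions remain unproven.
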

\begin{proof}
Let $x\in Q_0$ and $Q=Q_I$ be a Carleson box associated with $I \subset \T$ and $x\in Q.$ In the case where $|I| \geq |I_0|$, we simply estimate 
$$
 \vint_{Q} (w \cdot \Chi_{Q_0}) \leq  \vint_{Q_0} w.
$$
Now, if $|I| < |I_0|$, since $x\in Q_I \cap Q_{I_0}$, it is clear, by shifting $I$ inside $I_0$, that there exists an interval $I' \subset I_0$ containing $I \cap I_0$ with $|I'|= |I|$. Therefore $Q_{I'} \subset Q_0$, $x\in Q_0 \cap Q_I \subset Q_{I'} $, and
$$
\frac{1}{|Q_{I}|} \int_{Q_{I}} w \cdot \Chi_{Q_0} \leq \frac{1}{|Q_{I'}|} \int_{Q_{I'}} w \cdot \Chi_{Q_0} =\frac{1}{|Q_{I'}|} \int_{Q_{I'}} w.
$$
This proves the first equality.

Assume now that $w$ is doubling. Let $x\in Q_0$, and $I \subset I_0$ an arc with $x\in Q_I \subset Q_0.$ Let $k\in \N \cup \lbrace 0 \rbrace$ so that
$$
2^{-k-1} |I_0| \leq |I| \leq 2^{-k} |I_0|.
$$
Denote $\mathcal{D}_j(I_0):= \lbrace J \in \mathcal{D}(I_0) \, : \, |J| = 2^{-j} |I_0| \rbrace$ for every $j \in \N \cup \lbrace 0 \rbrace$. Note that $I$ intersects at most $2$ intervals of the family $\mathcal{D}_k(I_0),$ as otherwise $I$ would intersect at least $3$ intervals of $\mathcal{D}_k(I_0)$, thus containing some $I' \in \mathcal{D}_k(I_0)$ with $|I| > |I'| = 2^{-k} |I_0|,$ a contradiction. Let $I_1, I_2 \in \mathcal{D}_k(I_0)$ those for which $I_1\cap I, I_2 \cap I \neq \emptyset.$ Since $\mathcal{D}_k(I_0)$ defines a partition of $I_0$, we have that $I \subset I_1 \cup I_2$. Also, note that 
$$
\frac{1}{2} |I_j| \leq |I| \leq |I_j|, \quad j=1,2.
$$
This implies that $Q_I \subset Q_{I_1} \cup Q_{I_2}$ and, according to \eqref{eq:proportionmeasureannuli}, 
$$
\frac{1}{4} |Q_{I_j}| \leq |Q_I| \leq |Q_{I_j}|, \quad j=1,2.
$$
Assume that $Q_{I_1}$ is the box containing $x$. If $J: = I_1 \cup I_2$ (not necessarily a dyadic sub-interval of $I_0$), one has that $2 |I_1 | = |J| $, and by the doubling property of $w$:
$$
w(Q_I) \leq w(Q_J) \lesssim_w w(Q_{I_1}). 
$$
Therefore, we have the estimate
$$
 \frac{1}{|Q_{I}|} \int_{Q_{I}} w \lesssim_w  \frac{1}{|Q_{I_1}|} \int_{Q_{I_1}} w \leq \mathrm{M}_{Q_0} w (x).
$$
But also, since $|I| \geq |I_1|/2, |I_2|/2$ we have that $|I| \geq |J|/4$. Since $I \subset J,$ the doubling condition of $w$ tells us that $w(Q_I) \gtrsim_w w(Q_J)$ and therefore
\begin{equation*}
 \frac{1}{|Q_{I}|} \int_{Q_{I}} w \approx  \frac{1}{|Q_{I_1}|} \int_{Q_{I}} w \gtrsim_w  \frac{1}{|Q_{I_1}|} \int_{Q_{J}} w \geq \frac{1}{|Q_{I_1}|} \int_{Q_{I_1}} w \geq \mathrm{m}_{Q_0} w (x). \qedhere
\end{equation*}
\end{proof}

\begin{remark}\label{rem:doublingweightreformulationconditions} \rm If $w$ is a doubling weight, then Lemma \ref{lem:equivalentmaximalfunctions} shows that conditions \eqref{eq:Lpconditionmaximal} and \eqref{eq:conditionforFWiffBinfty} can be formulated with $\M(w \cdot \Chi_Q)$ in place of the dyadic maximal function $\M_Q w$. 

Similarly, for doubling weights $w$, the conditions \eqref{eq:Lpconditionminimal} and \eqref{eq:logconditionminimal} can be reformulated with the minimal function defined in the right-hand side of \eqref{eq:comparisondyadicminimalandcontainedminimal} instead of $\mathrm{m}_Q w$.
\end{remark}

\subsection{The Fujii-Wilson property and the class $\mathrm{B}_\infty(\D)$} Here we establish two important lemmas concerning \eqref{eq:defBifnty} weights and the Fujii-Wilson \eqref{eq:definitionofFujiiWilson} property.

The implication $\eqref{eq:defBifnty} \implies \eqref{eq:definitionofFujiiWilson}$ was proven in \cite[Theorem 6.1]{DMFO16} for the Carleson square basis in the half-plane. There, the authors used an argument based on a discretized version of the usual maximal function. Here we offer an alternative proof based on Calderón-Zygmund decompositions at arbitrarily large scales, making use of the fact that for doubling weights $w$ one can replace $\mathrm{M}(w \cdot \Chi_Q)$ with $\mathrm{M}_Q w$; see Lemma \ref{lem:equivalentmaximalfunctions}.

\begin{lemma}\label{lem:BinftyimpliesFWanddoubling}
Let $w$ be a weight in $\D$ with the \eqref{eq:defBifnty} condition. Then $w$ is doubling and satisfies the Fujii-Wilson property \eqref{eq:definitionofFujiiWilson}.  
\end{lemma}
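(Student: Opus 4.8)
The plan is to establish the two conclusions in order: first that $w$ is a doubling weight, and then, using this, that $w$ satisfies the Fujii--Wilson condition \eqref{eq:definitionofFujiiWilson}. Let $\alpha,\beta\in(0,1)$ be the constants in the \eqref{eq:defBifnty} hypothesis for $w$. The doubling part is soft and follows by testing \eqref{eq:defBifnty} on thin concentric shells of a Carleson box. The Fujii--Wilson part is the main content: once doubling is available, Lemma~\ref{lem:equivalentmaximalfunctions} lets me replace $\M(w\cdot\Chi_{Q_I})$ by the dyadic maximal function $\M_{Q_I}w$, after which a Calderón--Zygmund iteration over geometrically spaced levels produces the required $L^1$ bound.

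\emph{Doubling.} I would set $\varepsilon=\alpha/2\in(0,1)$. For an arbitrary arc $I$, the second estimate of \eqref{eq:proportionmeasureannuli} gives $|Q_I\setminus Q_{(1-\varepsilon)I}|\le 2\varepsilon|Q_I|=\alpha|Q_I|$, so applying \eqref{eq:defBifnty} with $E=Q_I\setminus Q_{(1-\varepsilon)I}\subset Q_I$ yields $w\bigl(Q_I\setminus Q_{(1-\varepsilon)I}\bigr)\le\beta w(Q_I)$, that is, $w\bigl(Q_{(1-\varepsilon)I}\bigr)\ge(1-\beta)w(Q_I)$. Iterating this along the arcs $(1-\varepsilon)^kI$, $k=0,1,2,\dots$ (each step being legitimate since $(1-\varepsilon)^kI$ is again an arc), gives $w(Q_I)\le(1-\beta)^{-k}w\bigl(Q_{(1-\varepsilon)^kI}\bigr)$. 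Choosing $k_0=k_0(\alpha)$ with $(1-\varepsilon)^{k_0}\le\tfrac12$ and noting that then $(1-\varepsilon)^{k_0}I\subset\tfrac12 I$, hence $Q_{(1-\varepsilon)^{k_0}I}\subset Q_{\frac{1}{2}I}$, I obtain $w(Q_I)\le(1-\beta)^{-k_0}w\bigl(Q_{\frac{1}{2}I}\bigr)$ for all arcs $I$, which is the doubling property of Definition~\ref{def:doublingweight}.

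\emph{Fujii--Wilson.} Since $w$ is now doubling, Lemma~\ref{lem:equivalentmaximalfunctions} gives $\M(w\cdot\Chi_{Q_I})\approx_w\M_{Q_I}w$ on $Q_I$, so it suffices to show $\int_{Q_I}\M_{Q_I}w\le C\,w(Q_I)$ for every arc $I$, with $C=C(\alpha,\beta)$. Fix $Q_0=Q_I$, put $\lambda_0=\vint_{Q_0}w$, $N=\max\{5,4/\alpha\}$, and $\lambda_m=N^m\lambda_0$. The core of the argument is the decay estimate
\[
w\bigl(\{x\in Q_0:\M_{Q_0}w(x)>\lambda_m\}\bigr)\le\beta^m\,w(Q_0),\qquad m\ge 0,
\]
proved by induction on $m$ (the case $m=0$ being trivial). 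For the inductive step I apply the Calderón--Zygmund decomposition of Lemma~\ref{lem:CZdecomposition} with $\mu$ the Lebesgue measure (so $C(\mu)=4$) to $w$ on $Q_0$ at level $\lambda_m$: this produces pairwise disjoint arcs $\{I_j\}\subset\mathcal{D}(I)$ with $\lambda_m<\vint_{Q_{I_j}}w\le 4\lambda_m$, with $\vint_{Q_J}w\le\lambda_m$ whenever $J\in\mathcal{D}(I)$ strictly contains some $I_j$, and with $\bigcup_j Q_{I_j}=\{\M_{Q_0}w>\lambda_m\}$. Fixing $j$, the nesting of dyadic arcs through a common point combined with the maximality of $I_j$ shows $\M_{Q_0}w(x)=\max\bigl(\M_{Q_{I_j}}w(x),\rho_j(x)\bigr)$ for $x\in Q_{I_j}$, with $\rho_j(x)\le 4\lambda_m$; since $\lambda_{m+1}=N\lambda_m>4\lambda_m$, this identifies the set $\{x\in Q_{I_j}:\M_{Q_0}w>\lambda_{m+1}\}$ with $\{x\in Q_{I_j}:\M_{Q_{I_j}}w>\lambda_{m+1}\}$. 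Applying Lemma~\ref{lem:CZdecomposition} once more, inside $Q_{I_j}$ at level $\lambda_{m+1}$ (legitimate as $\vint_{Q_{I_j}}w\le 4\lambda_m\le\lambda_{m+1}$), I write this set as a union of pairwise disjoint Carleson boxes $Q_{I_{j,k}}$ with $\vint_{Q_{I_{j,k}}}w>\lambda_{m+1}$, whence
\[
\lambda_{m+1}\sum_k|Q_{I_{j,k}}|\le\sum_k w(Q_{I_{j,k}})\le w(Q_{I_j})\le 4\lambda_m|Q_{I_j}|=\tfrac{4}{N}\lambda_{m+1}|Q_{I_j}|,
\]
so that $\bigl|\bigcup_k Q_{I_{j,k}}\bigr|=\sum_k|Q_{I_{j,k}}|\le\tfrac{4}{N}|Q_{I_j}|\le\alpha|Q_{I_j}|$. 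Now \eqref{eq:defBifnty}, applied to the Carleson box $Q_{I_j}$ and the set $\bigcup_k Q_{I_{j,k}}$, bounds the $w$-measure of the latter by $\beta w(Q_{I_j})$; summing over the disjoint $Q_{I_j}$ and using the inductive hypothesis closes the induction. Finally, the left inequality in \eqref{eq:estimateCZlevelsets} turns the decay of the $w$-measures into $|\{\M_{Q_0}w>\lambda_m\}|\le\beta^m\lambda_m^{-1}w(Q_0)$, and the layer-cake formula, together with $\int_0^{\lambda_0}|\{\M_{Q_0}w>\lambda\}|\,d\lambda\le\lambda_0|Q_0|=w(Q_0)$ and the geometric series $\sum_m\beta^m$, gives $\int_{Q_0}\M_{Q_0}w\le\bigl(1+\tfrac{N-1}{1-\beta}\bigr)w(Q_0)$, as desired.

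\emph{Main obstacle.} I expect the subtle point to be the double role of the geometric ratio $N$ in the iteration. On the one hand $N$ must exceed the Calderón--Zygmund constant $C(\mu)=4$, so that $\M_{Q_0}w$ \emph{localizes} to each Calderón--Zygmund box of the preceding generation (the averages over strictly larger boxes being at most $\lambda_m$); on the other hand $N$ must be at least $4/\alpha$, so that the boxes of the next generation occupy no more than an $\alpha$-fraction of their parent in \emph{Lebesgue} measure, which is exactly what makes \eqref{eq:defBifnty} applicable. These two requirements are compatible, and meeting both is precisely the device that substitutes here for the pointwise estimate $w\lesssim\M w$, which is unavailable in the Carleson-square basis; correspondingly, the argument only ever controls the \emph{weighted} measure of level sets of $\M_{Q_0}w$, never $w$ itself, which is why it yields \eqref{eq:definitionofFujiiWilson} but cannot be pushed to a reverse H\"older inequality. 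A minor additional wrinkle is that $w$ must first be shown doubling in order to pass, via Lemma~\ref{lem:equivalentmaximalfunctions}, from the non-dyadic maximal function appearing in the definition of \eqref{eq:definitionofFujiiWilson} to the dyadic one on which the iteration runs.
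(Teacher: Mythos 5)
Your proof is correct and follows essentially the same route as the paper: the identical thin-shell argument for doubling, then Calderón--Zygmund cubes at geometrically spaced levels with \eqref{eq:defBifnty} applied to the union of next-generation cubes (which occupy at most an $\alpha$-fraction of their parent box), and finally Lemma~\ref{lem:equivalentmaximalfunctions} to return from $\M_Q w$ to $\M(w\cdot\Chi_Q)$. The only difference is the bookkeeping at the end: the paper integrates $\M_Q w$ directly over the stopping regions $F_k^j = Q_k^j\setminus\bigcup_l Q_{k+1}^l$ using $w(F_k^j)\ge(1-\beta)w(Q_k^j)$, whereas you prove geometric decay of $w\bigl(\{\M_Q w>N^m w_Q\}\bigr)$, convert to Lebesgue-measure decay via the weak-type estimate \eqref{eq:estimateCZlevelsets}, and finish with the layer-cake formula — both versions are sound.
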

\begin{proof}
Let $\alpha,\beta \in (0,1)$ be as in the \eqref{eq:defBifnty}-condition.

We will first verify that $w$ is a doubling weight. For each arc $I \subset \T$, and $\varepsilon \in (0,1)$ consider the box $E_\varepsilon= Q_I \setminus Q_{(1-\varepsilon) I}$; where $(1-\varepsilon) I$ has same center as $I$ and length $(1-\varepsilon)|I|$. By \eqref{eq:proportionmeasureannuli}, we get that $|E_\varepsilon| \leq 2 \varepsilon |Q_I|.$ Thus we can take $\varepsilon= \varepsilon(\alpha) >0$ small enough so that $|E_\varepsilon| \leq \alpha |Q_I|,$ and therefore $w(E_\varepsilon) \leq \beta w(Q_I).$ In other words, 
$$
w( Q_{(1-\varepsilon) I} ) \geq (1-\beta) w(Q_I).
$$
Iterating this inequality, we get that 
$$
w\left( Q_{\frac{1}{2} I} \right)  \geq (1-\beta)^{N(\alpha)} w(Q_I),
$$
for a natural number $N(\alpha)$ depending only on $\alpha$.

We now show the Fujii-Wilson property for $w.$ Fix a Carleson box $Q,$ and let $\lambda>1$ be large enough so that $4/\lambda \leq \alpha$. Then, for each $k\in \N \cup \lbrace 0 \rbrace$, let $\lbrace Q_k^j \rbrace_j$ be the dyadic Calderón-Zygmund subcubes of $Q$ of level $w_Q \cdot \lambda^k$ from Lemma \ref{lem:CZdecomposition}. We define the sets
\begin{equation}\label{eq:definitionFkEkj}
F_k^j :  = Q_k^j \setminus \bigcup_{l} Q_{k+1}^l, \quad E_k^j:= Q_k^j \setminus F_k^j.
\end{equation}
Observe that if $l,j$ are indices so that $Q_k^j \cap Q_{k+1}^l \neq \emptyset,$ then, by the properties of the dyadic cubes, we have either $Q_k^j \subset Q_{k+1}^l$ or $Q_{k+1}^l \subset Q_k^j$. But since
$$
\vint_{Q_{k+1}^l} w > w_Q \lambda^{k+1} > w_Q \lambda^k,
$$
the first situation is impossible, by the maximality of $Q_k^j$ as a cube of level $w_Q \lambda^k$. Therefore, $Q_{k+1}^l \subset Q_k^j.$ This observation will be taken into account to estimate the Lebesgue measure of $E_k^j,$ in combination with the properties of the Calderón-Zygmund cubes (see Lemma \ref{lem:CZdecomposition}):
\begin{align*}
|E_k^j| & = | Q_k^j \setminus F_k^j | = \sum_{ \lbrace l \, : \, Q_k^j \cap Q_{k+1}^l \neq \emptyset \rbrace} | Q_k^j \cap Q_{k+1}^l |   \leq   \sum_{ \lbrace l \, : \, Q_k^j \cap Q_{k+1}^l \neq \emptyset \rbrace} |   Q_{k+1}^l | \\[0.5em]
& \leq  \sum_{ \lbrace l \, : \, Q_k^j \cap Q_{k+1}^l \neq \emptyset \rbrace} \frac{w(Q_{k+1}^l)}{w_Q \cdot \lambda^{k+1}} \leq \frac{w(Q_k^j)}{w_Q \cdot \lambda^{k+1}}  \leq \frac{4 w_Q \cdot \lambda^k}{w_Q \cdot \lambda^{k+1}} |Q_k^j| = \frac{4}{\lambda} |Q_k^j| \leq \alpha |Q_k^j|.
\end{align*}
By the \eqref{eq:defBifnty} condition, we get that $w(E_k^j) \leq \beta w(Q_k^j),$ from which
\begin{equation}\label{eq:estimateweightedmeasureFkj}
w(F_k^j) \geq (1-\beta) w(Q_k^j), \quad \text{for all} \quad k,j.    
\end{equation}
It is clear that $Q \cap \lbrace \mathrm{M}_Q w > w_Q \rbrace$ can be written as the disjoint union the sets $\lbrace F_k^j\rbrace_{k,j}$, by \eqref{eq:definitionFkEkj} and the trivial fact that $\mathrm{M}_Q w$ is always larger than $w_Q$. Using \eqref{eq:estimateweightedmeasureFkj}, we have
\begin{align*}
\int_{Q \cap \lbrace \mathrm{M}_Q w> w_Q \rbrace} \mathrm{M}_Q w & = \sum_{k} \sum_j \int_{F_k^j} \mathrm{M}_Q w \leq \sum_{k} \sum_j  w_Q  \lambda^k | F_k^j | \leq \sum_{k} \sum_j \frac{| F_k^j |}{|Q_k^j|} w(Q_k^j) \\[0.5em]
& \leq \frac{1}{1-\beta}  \sum_{k} \sum_j w(F_k^j) = \frac{1}{1-\beta} w(Q) = \frac{1}{1-\beta} \int_Q w,
\end{align*}
whence
$$
\int_Q \mathrm{M}_Q w =\int_{Q \cap \lbrace \mathrm{M}_Q w = w_Q \rbrace} \mathrm{M}_Q w + \int_{Q \cap \lbrace \mathrm{M}_Q > w_Q \rbrace} \mathrm{M}_Q w \leq |Q| w_Q + \frac{1}{1-\beta}\int_Q w = \frac{2-\beta}{1-\beta} \int_Q w.
$$
Now, since we have already proved that $w$ is doubling, Lemma \ref{lem:equivalentmaximalfunctions} can be combined with the above to conclude that
$$
\int_Q \mathrm{M}( w \cdot \Chi_Q) \leq C(w) \int_Q \mathrm{M}_Q w \leq C(w,\beta)\int_Q w,
$$
thus obtaining the Fujii-Wilson inequality.

\end{proof}

The proof of the following lemma is a slight modification of \cite[Lemma 2.2]{HPR12}, adapted to the basis of Carleson squares. In the proof we also prioritize obtaining a better power $p$, at the cost of a worse multiplicative constant, cf. Remark \ref{rem:verificationofLpestimateinmaintheoremRHI}.

\begin{lemma}\label{lemma:selfimprovedyadicmaximal}
Assume that $w$ satisfies the Fujii-Wilson condition \eqref{eq:definitionofFujiiWilson}, and denote $L=[w]_{\FW}$. Then, for every
$$
1< p < \frac{4 L}{4L-1}
$$
one has
$$
\vint_{Q} \left( \mathrm{M}_{Q} w \right)^{p} \leq  \frac{L}{1-4 L/p' } \left( \vint_{Q} w \right)^{p}, 
$$
for every Carleson square $Q$. Here $p'$ is the dual index to $p$.
\end{lemma}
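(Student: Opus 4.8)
The plan is to run the standard Fujii--Wilson self-improvement argument, but tracking the Calderón--Zygmund structure over the Carleson-square basis using Lemma~\ref{lem:CZdecomposition}. Fix a Carleson box $Q$ and, after normalizing, assume $w_Q = \vint_Q w = 1$; the general case follows by homogeneity since both sides scale like $w_Q^p$. Pick $\lambda > 1$ to be chosen (it will be a small power, essentially $\lambda$ close to $1$ won't work — we will in fact want $\lambda$ moderately large, and the factor $4$ below is exactly the doubling constant $C(\mu) = 4$ of Lemma~\ref{lem:CZdecomposition} for Lebesgue measure). For each integer $k \geq 0$, let $\{Q_k^j\}_j$ be the dyadic Calderón--Zygmund cubes of $w$ over $Q$ at level $\lambda^k$, so that $\{\mathrm{M}_Q w > \lambda^k\} = \bigcup_j Q_k^j$ and $\lambda^k < \vint_{Q_k^j} w \leq 4\lambda^k$.

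The key step is the layer-cake / distributional computation. Write
\begin{equation*}
\vint_Q (\mathrm{M}_Q w)^p = \frac{1}{|Q|}\int_Q (\mathrm{M}_Q w)^p
\end{equation*}
and split the range of $\mathrm{M}_Q w$ dyadically with respect to the ratio $\lambda$: on the set $\{\lambda^k < \mathrm{M}_Q w \leq \lambda^{k+1}\}$ one has $(\mathrm{M}_Q w)^p \leq \lambda^{(k+1)p}$, so
\begin{equation*}
\int_Q (\mathrm{M}_Q w)^p \leq \lambda^p |Q| + \sum_{k \geq 0} \lambda^{(k+1)p}\, \bigl| Q \cap \{\mathrm{M}_Q w > \lambda^k\}\bigr| = \lambda^p|Q| + \lambda^p \sum_{k\geq 0}\lambda^{kp}\sum_j |Q_k^j|.
\end{equation*}
Now I estimate $|Q_k^j|$: since $\vint_{Q_k^j} w > \lambda^k$, we get $|Q_k^j| < \lambda^{-k} w(Q_k^j)$, hence $\lambda^{kp}|Q_k^j| < \lambda^{k(p-1)} w(Q_k^j)$. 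To control $w(Q_k^j)$ we invoke the Fujii--Wilson inequality on the parent structure: because $Q_k^j \subset Q$ and $\vint_{Q_k^j} w \leq 4\lambda^k$, the restricted maximal function satisfies $\mathrm{M}_{Q}(w)\geq \mathrm{M}_{Q_k^j}(w)$ on $Q_k^j$, and more to the point $w(Q_k^j) \leq \int_{Q_k^j}\mathrm{M}_{Q_k^j} w \leq \int_{Q_k^j} \mathrm{M}(w\chi_{Q_k^j}) \leq L\, w(Q_k^j)$ — that's circular, so instead the right move is: $\sum_j \lambda^{k(p-1)} w(Q_k^j) = \lambda^{k(p-1)} w(\{\mathrm{M}_Q w > \lambda^k\})$, and then resum in $k$ using the geometric decay coming from $p-1$ small. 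Summing the series $\sum_k \lambda^{k(p-1)} w(\{\mathrm{M}_Q w > \lambda^k\})$ by Abel summation / layer-cake in reverse gives, up to the factor $\tfrac{1}{1-\lambda^{-(p-1)}}$-type constant, a bound by $\int_Q \mathrm{M}_Q w \leq \int_Q \mathrm{M}(w\chi_Q) \leq L\,w(Q) = L|Q|$ via Lemma~\ref{lem:equivalentmaximalfunctions} and the Fujii--Wilson hypothesis. Collecting terms, dividing by $|Q|$, and optimizing the choice of $\lambda$ — taking $\lambda^{p-1}$ comparable to $1$ but the factor $4$ absorbed so that the geometric ratio is $4/\lambda^{p-1} < 1$, which forces precisely $p' > 4L$ after the book-keeping, i.e.\ $p < 4L/(4L-1)$ — yields the stated constant $L/(1 - 4L/p')$.

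The main obstacle, and the place requiring care, is the resummation: one must arrange the chain of inequalities so that $w(Q_k^j)$ is bounded \emph{only} through the single global Fujii--Wilson estimate $\int_Q \mathrm{M}(w\chi_Q)\leq L\int_Q w$ and not through a self-referential use of $\mathrm{M}_Q w$ on each $Q_k^j$; the clean way is to recognize $\sum_k \lambda^{kp}\sum_j|Q_k^j| \lesssim \sum_k \lambda^{k(p-1)} w(\{\mathrm{M}_Qw>\lambda^k\})$ and then compare this weighted sum of superlevel sets to $\int_Q \mathrm{M}_Q w$ by the elementary inequality $\sum_k \lambda^{k(p-1)}\mathbf{1}_{\{\mathrm{M}_Q w > \lambda^k\}} \lesssim_{\lambda,p} \mathrm{M}_Q w$, valid pointwise because for each point the sum is a geometric series terminating at $k \approx \log_\lambda \mathrm{M}_Q w$. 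Tracking the constant in this last pointwise inequality (it is $\tfrac{1}{1-\lambda^{-(p-1)}}$, and after inserting the extra $\lambda^p$ and the $4$ from doubling) against the Fujii--Wilson constant $L$, and then choosing $\lambda$ to make the combined geometric factor exactly $4L/p'$, is what produces the sharp range $1 < p < 4L/(4L-1)$ and the displayed bound. The replacement of $\mathrm{M}(w\chi_Q)$ by $\mathrm{M}_Q w$ throughout is legitimate here because $w$ is doubling — it satisfies the $\Binfty$-type consequences via Lemma~\ref{lem:BinftyimpliesFWanddoubling} is not needed, rather one only uses that the Calderón--Zygmund cubes have the stated properties, so no doubling of $w$ is actually required for the upper bound; Lemma~\ref{lem:CZdecomposition} with Lebesgue measure suffices.
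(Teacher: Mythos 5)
There is a genuine gap at the resummation step, which is exactly the point where the Fujii--Wilson constant has to enter. After passing from $\lambda^{kp}\lvert Q_k^j\rvert$ to $\lambda^{k(p-1)}w(Q_k^j)$ via the Calder\'on--Zygmund lower bound, your sum is $\sum_k \lambda^{k(p-1)} w(\{\mathrm{M}_Q w>\lambda^k\}) = \int_Q \big(\sum_k \lambda^{k(p-1)}\mathbf{1}_{\{\mathrm{M}_Q w>\lambda^k\}}\big)\, w\,dx$. Your pointwise geometric-series bound is fine as far as it goes, but it produces (up to $\lambda,p$-constants) $(\mathrm{M}_Q w)^{p-1}$, and this gets integrated against $w\,dx$, not $dx$: the quantity you reach is $\int_Q (\mathrm{M}_Q w)^{p-1}\,w\,dx$, a \emph{weighted} integral. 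The claim that this is controlled by $\int_Q \mathrm{M}_Q w\,dx \le L\,w(Q)$ is false: take $w$ essentially equal to $\delta^{-1}\Chi_{Q'}$ for a Carleson box $Q'\subset Q$ with $\lvert Q'\rvert=\delta\lvert Q\rvert$; then $\int_Q (\mathrm{M}_Q w)^{p-1}w\,dx \approx \delta^{-(p-1)}\lvert Q\rvert (w_Q)^p$ while $\int_Q \mathrm{M}_Q w\,dx\approx \lvert Q\rvert w_Q\log(1/\delta)$, so no constant depending only on $\lambda,p$ can work. The Fujii--Wilson condition is an unweighted bound and gives no direct control of $w$-averages of $\mathrm{M}_Q w$; indeed $\int_Q(\mathrm{M}_Q w)^{p-1}w\,dx$ is (via the CZ density estimate $w(\Omega_\lambda)\le 4\lambda\lvert\Omega_\lambda\rvert$) comparable to the very quantity $\int_Q(\mathrm{M}_Q w)^p\,dx$ you are trying to estimate, so it can only be \emph{absorbed}, never bounded by $L\,w(Q)$. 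Moreover, by using only the CZ lower bound you have discarded the place where $L$ can enter at all; converting back with the CZ upper bound costs a factor $4>1$ and the absorption fails.

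The missing idea is localization of the maximal function on the stopping cubes. The paper writes the layer-cake with respect to the measure $\mathrm{M}_Q w\,dx$, namely $\int_Q(\mathrm{M}_Q w)^p\,dx=\int_0^\infty (p-1)\lambda^{p-2}\big(\int_{\Omega_\lambda}\mathrm{M}_Q w\,dx\big)d\lambda$ with $\Omega_\lambda=Q\cap\{\mathrm{M}_Q w>\lambda\}$, observes that $\mathrm{M}_Q w=\mathrm{M}_{Q_j^\lambda}w$ on each CZ cube $Q_j^\lambda$ (maximality), applies \eqref{eq:definitionofFujiiWilson} on each $Q_j^\lambda$ to get $\int_{\Omega_\lambda}\mathrm{M}_Q w\,dx\le L\,w(\Omega_\lambda)$, and only then uses $w(\Omega_\lambda)\le 4\lambda\lvert\Omega_\lambda\rvert$ to return to Lebesgue measure; integrating in $\lambda$ gives the term $\tfrac{4L(p-1)}{p}\int_Q(\mathrm{M}_Q w)^p=\tfrac{4L}{p'}\int_Q(\mathrm{M}_Q w)^p$, which is absorbed precisely when $p<4L/(4L-1)$, yielding the stated constant with no discretization in $\lambda$ and no optimization. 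Your remark that doubling of $w$ is not needed is correct (only $\mathrm{M}_Q w\le \mathrm{M}(w\Chi_Q)$ is used, which is free), but the scheme as written does not close; to repair it you would have to reinstate the per-cube Fujii--Wilson step before trading $w$-measure for Lebesgue measure.
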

\begin{proof}
Fix a Carleson square $Q=Q_I,$ and for each $\lambda \geq \vint_{Q} w,$ consider the Calderón-Zygmund intervals $\lbrace I_j^\lambda \rbrace_j$ at level $\lambda$ of $w$ as in Lemma \ref{lem:CZdecomposition}, denote the corresponding boxes by $\lbrace Q_j^\lambda \rbrace_j$, $Q_j^\lambda = Q_{I_j^\lambda}$, and also denote $\Omega_\lambda:= Q \cap \lbrace \mathrm{M}_Q w > \lambda \rbrace.$ Then, using the estimate \eqref{eq:definitionofFujiiWilson} for $Q$, we have the bound $\mathrm{M}_Q w (\Omega_\lambda) \leq L w(Q),$ which allows us to write
\begin{align*}
\int_{Q} (\mathrm{M}_Q w )^p \ud x & = \int_0^\infty (p-1) \lambda^{p-2} (\mathrm{M}_{Q} w)( \Omega_\lambda) \ud \lambda \\
& = \int_0^{w_{Q}} (p-1) \lambda^{p-2} (\mathrm{M}_{Q} w)( \Omega_\lambda) \ud \lambda +  \int_{w_{Q}}^\infty (p-1) \lambda^{p-2} (\mathrm{M}_{Q} w)( \Omega_\lambda) \ud \lambda \\
& \leq  L (w_Q)^{p-1} w(Q) + \int_{w_{Q}}^\infty (p-1) \lambda^{p-2} \left( \sum_j \int_{Q_j^\lambda}  (\mathrm{M}_{Q} w)(x) \ud x \right) \ud \lambda .
\end{align*}
 Given $x\in Q_j^\lambda$, if $x\in Q' =Q_J$ for some $J \in D(I)$, then either $Q' \subset Q_j^\lambda$ or $\vint_{Q'} w \leq \lambda$, as a consequence of the maximality of the Calderón-Zygmund cubes; see Lemma \ref{lem:CZdecomposition}. Therefore the supremum defining $\mathrm{M}_Q w (x)$ satisfies 
$$ 
\mathrm{M}_{Q} w (x)  = \sup_{J \in \mathcal{D}(I), \, J \subset I_{j}^\lambda} \Chi_{Q_J}(x) \cdot  \vint_{Q_J} w = \sup_{J\in \mathcal{D}(I_j^\lambda)} \Chi_{Q_J}(x) \cdot \vint_{Q_J} w = \mathrm{M}_{Q_j^\lambda} w (x), \quad x\in Q_j^\lambda. 
$$
This observation says that the last integral coincides with
$$
 \int_{w_{Q}}^\infty (p-1) \lambda^{p-2} \left( \sum_j \int_{Q_j^\lambda}  (\mathrm{M}_{Q_j^\lambda} w)(x) \ud x \right) \ud \lambda.
$$
By \eqref{eq:definitionofFujiiWilson} and the properties from Lemma \ref{lem:CZdecomposition} and, in particular,  \eqref{eq:estimateCZlevelsets}, this term is smaller than
\begin{align*}
 L \int_{w_{Q}}^\infty (p-1)   \lambda^{p-2} & \left( \sum_j  \int_{Q_j^\lambda} w(x) \ud x  \right) \ud \lambda    =  L \int_{w_{Q}}^\infty (p-1) \lambda^{p-2} w \left( Q \cap \lbrace \mathrm{M}_Q w > \lambda \rbrace \right) \ud \lambda \\[0.5em]
& \leq 4 L \int_{w_{Q}}^\infty (p-1) \lambda^{p-1} | Q \cap \lbrace \mathrm{M}_Q w > \lambda \rbrace | \ud \lambda \\[0.5em]
& \leq  4 L \int_{0}^\infty (p-1) \lambda^{p-1} | Q \cap \lbrace \mathrm{M}_Q w > \lambda \rbrace | \ud \lambda \leq \frac{4 L (p-1)}{p} \int_Q (\mathrm{M}_Q w)^{p}.
\end{align*}
Inserting this back into the inequality, averaging the integrals over $Q,$ and using the upper bound for $p$ from the assumption, we deduce the desired inequality. 
\end{proof}

\section{Optimal characterizations: proofs of the main results}\label{sec:proofsmainresults}

In this section, we prove our main results in the following order: Theorem \ref{thm:maintheoremBinftyiffFW}, Theorem \ref{thm:maintheoremBinftyiffRHI}, and then jointly Theorems~\ref{thm:maintheoremBp} and \ref{thm:maintheoremBlog}.

\subsection{Characterizing \eqref{eq:defBifnty} in terms of \eqref{eq:definitionofFujiiWilson}}

In this subsection, we prove Theorem \ref{thm:maintheoremBinftyiffFW}. Let us begin with some observations concerning the condition \eqref{eq:conditionforFWiffBinfty}. 

\begin{remark}
{\em 
Let $w$ be a weight in $\D.$
\begin{enumerate}
    \item Condition \eqref{eq:conditionforFWiffBinfty} for $w$ is equivalent to saying that there are $A>1$ and $b\in (0,1)$ so that
    $$
    w \left( \lbrace x\in Q\, : \, w(x) < A \cdot \mathrm{M}_Q w(x) \rbrace \right) \geq (1-b) \cdot w(Q)
    $$
    for all Carleson boxes $Q.$ In other words, in terms of the weighted $w$-measure, the set of points $Q$ where $w$ is dominated by the dyadic maximal function contains a uniform portion of $Q.$

    \item A natural stronger version of \eqref{eq:conditionforFWiffBinfty} is as follows:
   \begin{equation}\label{eq:conditionFWiffRHIForeverythereexists}
\text{For any  } \:  b\in (0,1) \: \text{ there is } \: A>1 \: \text{ s.t. } \:       w \left( \lbrace x\in Q\, : \, w(x) \geq A \cdot \mathrm{M}_Q w(x) \rbrace \right) \leq b \cdot w(Q), \tag{$\mathrm{M} \ud w^+$}
   \end{equation}
for every Carleson box $Q.$  We will show in Corollary~\ref{cor:Binfty+iffMAb+andFW} that this opens up for a characterization of absolute continuity in a stronger sense than that of \eqref{eq:defBifnty}.

    \item If $w$ is a doubling weight, then \eqref{eq:conditionforFWiffBinfty} and \eqref{eq:conditionFWiffRHIForeverythereexists} can be rewritten with the maximal function $\mathrm{M}(w \cdot \Chi_Q)  $ in place of the dyadic maximal function $\mathrm{M}_Q w$, due to Lemma \ref{lem:equivalentmaximalfunctions}.   

\end{enumerate}

}    
\end{remark}

\begin{lemma}\label{lem:BinftyimpliesMAbcondition}
If $w\in \mathrm{B}_\infty(\D)$, then $w$ satisfies condition \eqref{eq:conditionforFWiffBinfty}.
\end{lemma}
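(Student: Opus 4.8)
The plan is to deduce condition \eqref{eq:conditionforFWiffBinfty} from the equivalent characterization of $\mathrm{B}_\infty$ given in Lemma~\ref{lem:equivalencBinftysuperlevelsets}, together with the fact (Lemma~\ref{lem:BinftyimpliesFWanddoubling}) that a $\mathrm{B}_\infty$ weight is doubling and satisfies the Fujii-Wilson property. Fix a Carleson box $Q$ and let $\alpha,\beta\in(0,1)$ be the constants from Lemma~\ref{lem:equivalencBinftysuperlevelsets}, so that $w(\{x\in Q:w(x)\geq \alpha^{-1}w_Q\})\leq \beta w(Q)$. The difficulty is that the super-level set in \eqref{eq:conditionforFWiffBinfty} is cut by $\mathrm{M}_Q w$, not by the single average $w_Q$; so I would run a Calderón-Zygmund stopping-time argument to localize. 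Concretely, using Lemma~\ref{lem:CZdecomposition} at level $\lambda_k=w_Q(2/\alpha)^k$ (say), one partitions $Q$ into the top part where $\mathrm{M}_Q w\approx w_Q$ and the Calderón-Zygmund boxes $\{Q_j^{\lambda_k}\}$, and on each such box the dyadic maximal function restricted to the box equals the maximal function of that box, exactly as in the proof of Lemma~\ref{lemma:selfimprovedyadicmaximal}.

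The key step is then a self-improvement of the single-box estimate to the $\mathrm{M}_Q w$-cut estimate. Fix $A>1$ to be chosen, set $E_Q=\{x\in Q: w(x)\geq A\,\mathrm{M}_Q w(x)\}$, and write $Q\cap\{\mathrm{M}_Q w>w_Q\}$ as the disjoint union of the ``first generation'' Calderón-Zygmund boxes $\{Q_j\}$ at level $\lambda_1$. On the complement $G_0=Q\setminus\bigcup_j Q_j$ one has $\mathrm{M}_Q w(x)=w_Q$ for a.e.\ $x$, so $E_Q\cap G_0\subset\{x\in Q: w(x)\geq A w_Q\}$, whose $w$-measure is $\leq\beta w(Q)$ once $A\geq\alpha^{-1}$, by Lemma~\ref{lem:equivalencBinftysuperlevelsets}. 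On each $Q_j$, since $\mathrm{M}_Q w\big|_{Q_j}=\mathrm{M}_{Q_j}w$, the set $E_Q\cap Q_j$ equals $\{x\in Q_j: w(x)\geq A\,\mathrm{M}_{Q_j}w(x)\}$, which is again contained in $\{x\in Q_j: w(x)\geq A w_{Q_j}\}$ and hence has $w$-measure at most $\beta w(Q_j)$. Summing over $j$ and using that the $Q_j$ are pairwise disjoint subsets of $Q$, we obtain
\begin{equation*}
w(E_Q)\leq w(E_Q\cap G_0)+\sum_j w(E_Q\cap Q_j)\leq \beta w(Q)+\beta\sum_j w(Q_j)\leq 2\beta\, w(Q).
\end{equation*}
If $2\beta<1$ we are done; otherwise I would absorb the factor $2$ by the standard self-improvement trick, iterating the argument across a fixed number $N$ of generations (at levels $\lambda_1,\dots,\lambda_N$) so that the contribution of the ``deep'' boxes decays geometrically while the choice of $A$ controls how fast. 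An alternative, slicker route that avoids iterating is to choose $A$ itself large: replace the single cut at $\alpha^{-1}w_Q$ by the observation that $\{x\in Q: w(x)\geq A\,\mathrm{M}_Q w(x)\}$ is, generation by generation, contained in a set of the form $\{w\geq A w_{Q^\ast}\}$ over the relevant stopping box $Q^\ast$, and then apply Lemma~\ref{lem:equivalencBinftysuperlevelsets} with $\alpha$ replaced by $1/A$ — but this requires the ``for all $\alpha$'' strength of $\mathrm{B}_\infty$ only in the super-level set sense, which is exactly what Lemma~\ref{lem:equivalencBinftysuperlevelsets} does not give for free, so I expect the clean argument to be the finite iteration above.

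The main obstacle is bookkeeping the Calderón-Zygmund generations so that the total bound closes at a constant strictly below $1$: each generation contributes a factor $\beta$ from Lemma~\ref{lem:equivalencBinftysuperlevelsets} but the boxes multiply, and one must verify that the $w$-measure of the ``survivors'' after $N$ generations is controlled — this is where doubling of $w$ (from Lemma~\ref{lem:BinftyimpliesFWanddoubling}) and the measure estimate \eqref{eq:estimateCZlevelsets} enter, to relate $\sum_j w(Q_j)$ at level $\lambda_1$ back to $w(Q)$. Once the $w$-measure of the $k$-th generation survivors is shown to decay like $\beta^k w(Q)$ (or at least to be summable after subtracting the part where $w<A\,\mathrm{M}_Qw$), a geometric series gives \eqref{eq:conditionforFWiffBinfty} with $b=2\beta/(1-\beta)$ or similar, for a suitable fixed $A$ depending only on $\alpha,\beta$ and the doubling constant of $w$.
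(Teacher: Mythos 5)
You have missed the one observation that makes this lemma a two-line affair: for every $x\in Q$ the box $Q$ itself competes in the supremum defining $\mathrm{M}_Q w$, so $\mathrm{M}_Q w(x)\geq w_Q$ pointwise, and hence
\begin{equation*}
\lbrace x\in Q \, : \, w(x) \geq A\,\mathrm{M}_Q w(x) \rbrace \subset \lbrace x\in Q \, : \, w(x) \geq A\, w_Q \rbrace .
\end{equation*}
Applying Lemma~\ref{lem:equivalencBinftysuperlevelsets} with $A=1/\alpha$ then gives \eqref{eq:conditionforFWiffBinfty} with $b=\beta$ immediately; this is the paper's entire proof. The inequality goes the favorable way precisely because \eqref{eq:conditionforFWiffBinfty} concerns the set where $w$ exceeds the \emph{maximal} function, which is smaller than the set where it exceeds the single average $w_Q$ — the ``difficulty'' you identify at the outset is not there, and no Calderón--Zygmund localization is needed. (It is the converse direction, Lemma~\ref{lem:MAbconditioandFwimplyBinfty}, where real work is required.)

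Beyond being a detour, your argument as written does not close. It terminates in the bound $w(E_Q)\leq 2\beta\,w(Q)$, and $2\beta$ may well be $\geq 1$: Lemma~\ref{lem:equivalencBinftysuperlevelsets} only supplies \emph{some} pair $(\alpha,\beta)\in(0,1)^2$, not $\beta<1/2$. The proposed repair by iterating over $N$ generations is only gestured at, and the key claim that the $w$-measure of the $k$-th generation survivors decays like $\beta^k w(Q)$ does not follow from the super-level-set form alone; making it work requires the set form of \eqref{eq:defBifnty} together with a Lebesgue-measure estimate on nested Calderón--Zygmund boxes, which is exactly the machinery of the proof of Lemma~\ref{lem:BinftyimpliesFWanddoubling} (and also forces a choice of level ratio, so your levels $\lambda_k=w_Q(2/\alpha)^k$ versus the decomposition of $\lbrace \mathrm{M}_Q w>w_Q\rbrace$ would need fixing). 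Your intermediate steps are correct — in particular $\mathrm{M}_Q w=\mathrm{M}_{Q_j}w$ on each stopping box, and, tellingly, you use the containment $\lbrace w\geq A\,\mathrm{M}_{Q_j}w\rbrace\subset\lbrace w\geq A\,w_{Q_j}\rbrace$ locally on each $Q_j$; applying that same containment once, on $Q$ itself, is all the lemma needs.
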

\begin{proof}
By Lemma \ref{lem:equivalencBinftysuperlevelsets}, condition \eqref{eq:defBifnty} implies the existence of constants $ \alpha, \beta \in (0,1)$ with
$$
w \left( \lbrace x\in Q\, : \, w(x) \geq \frac{w_Q}{\alpha} \rbrace \right) \leq \beta w(Q)
$$
for all Carleson boxes $Q.$ The trivial pointwise estimate $w_Q \leq \mathrm{M}_Q w(x)$, $x\in Q,$ thus yields
$$
w \left( \lbrace x\in Q\, : \, w(x) \geq \frac{1}{\alpha} \mathrm{M}_Q w(x) \rbrace \right) \leq  w \left( \lbrace x\in Q\, : \, w(x) \geq \frac{w_Q}{\alpha} \rbrace \right) \leq \beta w(Q),
$$
which is \eqref{eq:conditionforFWiffBinfty} for $A=1/\alpha$ and $b=\beta.$
\end{proof}

\begin{lemma}\label{lem:MAbconditioandFwimplyBinfty}
If $w$ satisfies \eqref{eq:definitionofFujiiWilson} and \eqref{eq:conditionforFWiffBinfty}, then $w\in \mathrm{B}_\infty(\D)$.
\end{lemma}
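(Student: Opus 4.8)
The plan is to verify $\Binfty(\D)$ through its equivalent superlevel-set formulation in Lemma~\ref{lem:equivalencBinftysuperlevelsets}: I will produce constants $\alpha,\beta\in(0,1)$ such that $w(\{x\in Q:\ w(x)\ge \alpha^{-1}w_Q\})\le \beta\,w(Q)$ for every Carleson box $Q$. Let $A>1$, $b\in(0,1)$ be the constants in \eqref{eq:conditionforFWiffBinfty}, let $L=[w]_{\FW}$, and fix once and for all an exponent $p>1$ with $p<\tfrac{4L}{4L-1}$, so that Lemma~\ref{lemma:selfimprovedyadicmaximal} applies (note $L\ge 1$, so such $p$ exists). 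Given a box $Q$ and a large parameter $t\ge A$, set $E_t=\{x\in Q:\ w(x)\ge t\,w_Q\}$ and split
\[
E_t\ \subset\ \{x\in Q:\ w(x)\ge A\,\M_Q w(x)\}\ \cup\ \bigl(E_t\cap\{x\in Q:\ w(x)<A\,\M_Q w(x)\}\bigr).
\]
On the second set one has $A\,\M_Q w(x)>w(x)\ge t\,w_Q$, hence $\M_Q w(x)>\lambda$ with $\lambda:=(t/A)w_Q\ge w_Q$; so this set is contained in $\Omega_\lambda:=\{x\in Q:\ \M_Q w(x)>\lambda\}$. The first set has $w$-measure at most $b\,w(Q)$ by \eqref{eq:conditionforFWiffBinfty}, so everything reduces to showing that $w(\Omega_\lambda)\le \varepsilon(t)\,w(Q)$ with $\varepsilon(t)\to0$ as $t\to\infty$.

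To estimate $w(\Omega_\lambda)$ I would combine two ingredients. First, since $\vint_Q w=w_Q\le\lambda$, the Calderón--Zygmund decomposition of Lemma~\ref{lem:CZdecomposition} at level $\lambda$ applies, and the right-hand inequality of \eqref{eq:estimateCZlevelsets} (with $C(\mu)=4$ for Lebesgue measure) gives $w(\Omega_\lambda)=\int_{\Omega_\lambda}w\le 4\lambda\,|\Omega_\lambda|$. Second, the Fujii--Wilson hypothesis feeds into Lemma~\ref{lemma:selfimprovedyadicmaximal}, which yields $\vint_Q(\M_Q w)^p\le C_p\,(w_Q)^p$ with $C_p=\tfrac{L}{1-4L/p'}$; Chebyshev then gives $|\Omega_\lambda|\le \lambda^{-p}\int_Q(\M_Q w)^p\le C_p\,(A/t)^p\,|Q|$. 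Combining these and using $w(Q)=w_Q|Q|$, the powers of $t$ partially cancel and one is left with
\[
w(\Omega_\lambda)\ \le\ 4\lambda\,|\Omega_\lambda|\ \le\ 4C_p\,(A/t)^{p-1}\,w(Q).
\]
Hence $w(E_t)\le\bigl(b+4C_p(A/t)^{p-1}\bigr)w(Q)$, and choosing $t$ large enough that $4C_p(A/t)^{p-1}<1-b$ (keeping $t\ge A$) finishes the argument with $\alpha=1/t$ and $\beta=b+4C_p(A/t)^{p-1}<1$; Lemma~\ref{lem:equivalencBinftysuperlevelsets} then gives $w\in\Binfty(\D)$.

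The point to watch is that the naive estimate is \emph{not} enough: if one bounds $|\Omega_\lambda|$ using only the $L^1$ bound $\int_Q\M_Q w\le L\,w(Q)$ — which holds for arbitrary weights since $\M_Q w\le\M(w\cdot\Chi_Q)$ pointwise by Lemma~\ref{lem:equivalentmaximalfunctions}, with no doubling needed — then $|\Omega_\lambda|\le (LA/t)|Q|$, and the subsequent bound $w(\Omega_\lambda)\le 4\lambda|\Omega_\lambda|$ produces an estimate in which the factors of $t$ cancel exactly, leaving only $w(\Omega_\lambda)\le 4L\,w(Q)$, i.e.\ no gain. The genuine input is therefore the self-improved exponent $p>1$ supplied by Lemma~\ref{lemma:selfimprovedyadicmaximal}, which is precisely what turns that cancellation into the decaying factor $(A/t)^{p-1}$; everything else in the proof is routine, and no doubling property of $w$ is used at any stage.
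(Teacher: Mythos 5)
Your proof is correct and follows essentially the same strategy as the paper's: split the superlevel set $\{x\in Q:\ w(x)\ge t\,w_Q\}$ using \eqref{eq:conditionforFWiffBinfty}, control the remaining piece through the self-improved $L^p$ bound for $\M_Q w$ from Lemma~\ref{lemma:selfimprovedyadicmaximal}, and conclude via Lemma~\ref{lem:equivalencBinftysuperlevelsets}. The only divergence is technical: where the paper estimates the remainder by H\"older's inequality combined with Markov and the $L^1$ Fujii--Wilson bound, you bound $w(\Omega_\lambda)$ directly via the Calder\'on--Zygmund reverse inequality $\int_{\Omega_\lambda} w\le 4\lambda|\Omega_\lambda|$ from \eqref{eq:estimateCZlevelsets} together with Chebyshev for $\M_Q w$ in $L^p$ --- an equally valid (and slightly sharper, decay $t^{-(p-1)}$ rather than $(A^*)^{-1/p'}$) route, and your closing observation that the $L^1$ bound alone gives no gain correctly identifies Lemma~\ref{lemma:selfimprovedyadicmaximal} as the essential input.
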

\begin{proof}
Let $A >1$ and $b\in (0,1)$ be as in \eqref{eq:conditionforFWiffBinfty}, and denote by $L=[w]_{\mathrm{FW}}$ the Fujii-Wilson \eqref{eq:definitionofFujiiWilson} constant of $w.$ Let $A^*>1$ be a large parameter whose value will be specified at the end of the proof, depending on $A,b,$ and $L$. 

Using \eqref{eq:conditionforFWiffBinfty} we can estimate
\begin{align}\label{eq:sublevelsetswMQwlongestimates1}
w & \left( \lbrace x\in Q \, : \, w(x) \geq A^* \cdot w_Q \rbrace \right) \nonumber \\[0.5em]
& \leq w\left( \lbrace x\in Q \, : \,  w(x) \geq A \cdot \mathrm{M}_Q w(x) \rbrace \right)     +  w\left( \lbrace x\in Q \, : \, A^* \cdot  w_Q \leq w(x) < A \cdot \mathrm{M}_Q w (x) \rbrace \right) \nonumber \\[0.5em]
& \leq b \cdot w(Q)     +  w\left( \lbrace x\in Q \, : \, A^* \cdot w_Q \leq w(x) < A \cdot \mathrm{M}_Q w (x) \rbrace\right).
\end{align}
We now focus on estimating the second term of \eqref{eq:sublevelsetswMQwlongestimates1}. We first apply the H\"older inequality for some $1< p < 4L /(4L-1), $ and then Lemma \ref{lemma:selfimprovedyadicmaximal} precisely for that $p$:
\begin{align}\label{eq:sublevelsetswMQwlongestimates2}
 w  &  \left( \lbrace x\in Q \,  : \, A^* \cdot  w_Q \leq w(x) < A \cdot  \mathrm{M}_Q w(x) \rbrace  \right) \nonumber \\[0.5em]
 & \qquad  \qquad = \int_{Q \cap \lbrace A^* \cdot w_Q \leq w \leq A \cdot \mathrm{M}_Q w  \rbrace } w(x)   \ud x \leq \int_{Q \cap \lbrace A^* \cdot  w_Q \leq w \leq A \cdot \mathrm{M}_Q w \rbrace } A \cdot \mathrm{M}_Q w(x)   \ud x \nonumber \\[0.5em]
 & \qquad \qquad \leq \int_{Q \cap \lbrace \mathrm{M}_Q w  \geq (A^*/A) \cdot w_Q  \rbrace } A \cdot \mathrm{M}_Q w(x)   \ud x \nonumber \\[0.5em]
 & \qquad \qquad \leq A \left( \int_Q \left(  \mathrm{M}_Q w \right)^p \right)^{1/p} \left | Q \cap \lbrace  \mathrm{M}_Q w  \geq (A^*/A) \cdot w_Q  \rbrace \right |^{1/p'} \nonumber \\[0.5em]
 & \qquad \qquad \leq  A \left( \frac{L}{1-(4 L)/p' } \right)^{1/p} |Q|^{\frac{1}{p}-1} w(Q) \left | Q \cap \lbrace  \mathrm{M}_Q w  \geq (A^*/A) \cdot w_Q  \rbrace \right |^{1/p'} .
\end{align}
Using first Markov's Inequality and then the Fujii-Wilson \eqref{eq:definitionofFujiiWilson} condition, one has
$$
\left | Q \cap \lbrace  \mathrm{M}_Q w  \geq (A^*/A) \cdot w_Q  \rbrace \right | \leq \frac{A}{A^*\cdot w_Q} \int_Q \mathrm{M}_Q w \leq   \frac{A}{A^*\cdot w_Q} L \cdot w(Q) = \frac{A \cdot L}{A^*} |Q|.
$$
Plugging this estimate into \eqref{eq:sublevelsetswMQwlongestimates2}, we deduce
\begin{align}\label{eq:sublevelsetswMQwlongestimates3}
w     ( \lbrace x\in Q \, &  : \, A^* \cdot w_Q \leq w(x) < A \cdot \mathrm{M}_Q w (x) \rbrace ) \nonumber \\[0.5em]
& \leq     A \left( \frac{L}{1-(4 L)/p' } \right)^{1/p} |Q|^{\frac{1}{p}-1} w(Q) \left( \frac{A \cdot L}{A^*} |Q| \right)^{1/p'} = \frac{C(A,L,p)}{(A^*)^{1/p'}}  w(Q),  
\end{align}
for a constant $C(A,L,p)>0$ depending only on $A,L$, and $p$. Now, we choose $A^*$ large enough so that
$$
\frac{C(A,L,p)}{(A^*)^{1/p'}} \leq \frac{1-b}{2}.
$$
From \eqref{eq:sublevelsetswMQwlongestimates1} and \eqref{eq:sublevelsetswMQwlongestimates3} we conclude that
$$
w  \left( \lbrace x\in Q \, : \, w(x) \geq A^* \cdot w_Q \rbrace \right) \leq b w(Q) + \frac{1-b}{2}w(Q) = \frac{1+b}{2} w(Q),
$$
where $(1+b)/2 <1.$ By Lemma \ref{lem:equivalencBinftysuperlevelsets}, this shows that $w\in \mathrm{B}_\infty(\D).$
\end{proof}

\begin{proof}[Proof of Theorem \ref{thm:maintheoremBinftyiffFW}]
It follows by combining Lemmas \ref{lem:BinftyimpliesFWanddoubling}, \ref{lem:BinftyimpliesMAbcondition}, and \ref{lem:MAbconditioandFwimplyBinfty}. 
\end{proof}

With an almost identical proof, one can obtain the following equivalence.
\begin{corollary}\label{cor:Binfty+iffMAb+andFW}
For a weight $w$ in $\D,$ the following statements are equivalent.
\begin{itemize}
    \item[(a)] $w$ satisfies \eqref{eq:definitionofFujiiWilson} and \eqref{eq:conditionFWiffRHIForeverythereexists}.
    \item[(b)] For every $\beta \in (0,1)$ there exists $\alpha \in (0,1)$ so that if $Q$ is a Carleson box and $E\subset Q$ is measurable with $|E| \leq \alpha |Q|,$ then $w(E) \leq \beta w(Q).$ 
\end{itemize}
\end{corollary}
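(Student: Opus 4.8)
The plan is to mirror the proof of Theorem~\ref{thm:maintheoremBinftyiffFW} almost line by line, with the role played by the superlevel-set description of $\Binfty$ from Lemma~\ref{lem:equivalencBinftysuperlevelsets} now taken over by its quantitative ``for every $\beta$'' refinement. Concretely, the first step is to record that statement (b) of the corollary is equivalent to the statement that for every $\beta \in (0,1)$ there is $A>1$ with
\[
w\left( \lbrace x \in Q \, : \, w(x) \geq A \cdot w_Q \rbrace \right) \leq \beta \cdot w(Q) \qquad \text{for all Carleson boxes } Q.
\]
One direction is immediate from Markov's inequality: the set $E_A = \lbrace x \in Q : w(x) \geq A w_Q \rbrace$ satisfies $|E_A| \leq (A w_Q)^{-1} w(Q) = |Q|/A$, so taking $A = 1/\alpha$ lets (b) apply. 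For the reverse direction, given $\beta$ I would apply the superlevel bound with $\beta/2$ in place of $\beta$ to obtain the corresponding $A$, and then for $E \subset Q$ with $|E| \leq \alpha|Q|$ split $w(E) \le A w_Q |E| + w(E_A) \le (A\alpha + \beta/2)\, w(Q)$, which is at most $\beta w(Q)$ once $\alpha = \beta/(2A)$.

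Granting this reformulation, the implication (b) $\Rightarrow$ (a) is short. Fixing any single value of $\beta$ gives the $\Binfty$ condition, so Lemma~\ref{lem:BinftyimpliesFWanddoubling} hands us both doubling and the Fujii--Wilson property \eqref{eq:definitionofFujiiWilson}. For \eqref{eq:conditionFWiffRHIForeverythereexists}, given $b\in(0,1)$ I would take $A$ from the reformulation with $\beta = b$ and use the pointwise bound $w_Q \le \mathrm{M}_Q w(x)$ to get the inclusion $\lbrace w \ge A \mathrm{M}_Q w \rbrace \subset \lbrace w \ge A w_Q \rbrace$, so the left-hand side of \eqref{eq:conditionFWiffRHIForeverythereexists} is $\le b\, w(Q)$. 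This is exactly the argument of Lemma~\ref{lem:BinftyimpliesMAbcondition}, only now keeping track of how $A$ depends on $b$.

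The content is in (a) $\Rightarrow$ (b), which repeats the proof of Lemma~\ref{lem:MAbconditioandFwimplyBinfty} almost verbatim. Set $L = [w]_{\FW}$, fix $1 < p < 4L/(4L-1)$, and let $\beta' \in (0,1)$ be the target in the superlevel reformulation. I would apply \eqref{eq:conditionFWiffRHIForeverythereexists} with $b = \beta'/2$ to obtain $A>1$ with $w(\lbrace w \ge A\mathrm{M}_Q w \rbrace) \le (\beta'/2)\, w(Q)$, then run the chain of estimates \eqref{eq:sublevelsetswMQwlongestimates1}--\eqref{eq:sublevelsetswMQwlongestimates3} without change — Hölder's inequality, the self-improvement Lemma~\ref{lemma:selfimprovedyadicmaximal} at exponent $p$, Markov, and \eqref{eq:definitionofFujiiWilson} — to bound the weighted measure of the intermediate set $\lbrace x \in Q : A^* w_Q \le w(x) < A \mathrm{M}_Q w(x) \rbrace$ by $C(A,L,p)(A^*)^{-1/p'}\, w(Q)$, and finally choose $A^*$ so large that this is $\le \beta'/2$, which gives $w(\lbrace w \ge A^* w_Q \rbrace) \le \beta'\, w(Q)$. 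The only point requiring care — and the hard part is really just this bookkeeping — is to confirm that the proof of Lemma~\ref{lem:MAbconditioandFwimplyBinfty} never used the fixedness of $b$ in an essential way: there $b$ merely determined how large $A^*$ had to be taken, and the same role is now played by $\beta'$ (with $(1+b)/2$ replaced by $\beta'$), so the identical estimates go through for every $\beta'$.
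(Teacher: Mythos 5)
Your proposal is correct and follows essentially the same route as the paper: the paper's proof likewise observes that the argument of Lemma~\ref{lem:MAbconditioandFwimplyBinfty} yields the ``for every $\beta$ there is $\alpha$'' superlevel-set condition (which is equivalent to (b)), and that the converse direction repeats Lemmas~\ref{lem:BinftyimpliesMAbcondition} and \ref{lem:MAbconditioandFwimplyBinfty} while tracking the dependence of the constants on $b$. Your only addition is to prove directly the standard equivalence between (b) and the quantitative superlevel-set condition, which the paper simply cites as known; that detail is handled correctly.
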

\begin{proof}
If $(a)$ holds, it is clear from the proof of Lemma \ref{lem:MAbconditioandFwimplyBinfty} that $w$ satisfies the following property:
\begin{equation}\label{eq:BinftywithditributionalandForeverythereis}
\text{for any } \: \beta \in (0,1) \: \text{ there is } \: \alpha \in (0,1) \: \text{ with } \:     w \left( \lbrace x \in Q\, : \, w(x) \geq \frac{1}{\alpha} w_Q \rbrace \right) \leq \beta w(Q)
\end{equation}
for all Carleson boxes $Q$. This is known to be equivalent to $(b)$ for arbitrary weights and general bases.

The other direction follows the proofs of Lemmas~\ref{lem:BinftyimpliesMAbcondition} and \ref{lem:MAbconditioandFwimplyBinfty} almost verbatim.
\end{proof}

\subsection{Characterizing the Reverse H\"older Inequality in terms of \eqref{eq:definitionofFujiiWilson}}

\begin{lemma}\label{lem:RHIimpliesepsiloncondition}
Let $w$ be a weight in $\D$ satisfying \eqref{eq:definitionofRHI} with exponent $p_0>1,$ that is,
$$
  \vint_Q w^{p_0}  \leq C \left( \vint_Q w \right)^{p_0}, 
$$
for a constant $C\geq 1$ and all Carleson squares $Q$. Then, for every $1 \leq  p \leq  p_0$ and $Q$, we have that
$$
\int_Q w^p \leq C \int_Q (\mathrm{M}_Q w)^p,
$$
with the same constant $C$ as above. That is, condition \eqref{eq:Lpconditionmaximal} holds. 
\end{lemma}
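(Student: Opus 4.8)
The plan is to run a stopping-time (principal box) decomposition over $Q$ adapted to the dyadic averages of $w$, carrying a free parameter $\gamma>1$ which is sent to $+\infty$ at the very end so as to recover the sharp constant $C$.

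First a reduction. By H\"older's inequality with exponents $p_0/p$ and its conjugate, the hypothesis $\vint_{Q'}w^{p_0}\le C(\vint_{Q'}w)^{p_0}$ for all Carleson boxes $Q'$ implies $\vint_{Q'}w^{p}\le C^{p/p_0}(\vint_{Q'}w)^{p}\le C(\vint_{Q'}w)^{p}$ for every $1\le p\le p_0$, since $C\ge1$. So I fix $p\in[1,p_0]$, assume $\vint_{Q'}w^p\le C(\vint_{Q'}w)^p$ for all Carleson boxes $Q'$, and aim to prove $\int_Q w^p\le C\int_Q(\mathrm{M}_Q w)^p$ for an arbitrary Carleson box $Q=Q_I$. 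Fix $Q$ and $\gamma>1$, and build a family $\mathcal{G}$ of stopping boxes recursively: $\mathcal{G}_0=\{Q\}$; given a stopping box $P=Q_J\in\mathcal{G}_k$, its \emph{children} $\ch(P)$ are the boxes $Q_{J'}$, with $J'$ running over the maximal arcs (in the sense of Lemma \ref{lem:maximalsubcollections}) of $\{J''\in\mathcal{D}(J):\ J''\neq J,\ \vint_{Q_{J''}}w>\gamma\,\vint_P w\}$, so that these are pairwise disjoint subsets of $P$; set $\mathcal{G}_{k+1}=\bigcup_{P\in\mathcal{G}_k}\ch(P)$, $\mathcal{G}=\bigcup_{k\ge0}\mathcal{G}_k$, and $E_P:=P\setminus\bigcup_{P'\in\ch(P)}P'$. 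Along any chain of nested stopping boxes the averages of $w$ increase by a factor $>\gamma$ at each step, while $\mathrm{M}_Q w<\infty$ everywhere on $Q$ by Remark \ref{rem:finitenessminimalmaximal}; hence every $x\in Q$ lies in only finitely many stopping boxes, thus in a unique smallest one $P(x)\in\mathcal{G}$, and $\{E_P\}_{P\in\mathcal{G}}$ partitions $Q$, with $x\in E_P\iff P(x)=P$.

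Two standard structural facts then drive the argument. \emph{(i) The maximal function is trapped on each $E_P$:} for $x\in E_P$ one has $\vint_P w\le\mathrm{M}_Q w(x)\le\gamma\,\vint_P w$. The lower bound is immediate from $x\in P$. For the upper bound, take any dyadic $Q_{J'}\ni x$ with $Q_{J'}\subseteq Q$, and let $R$ be the smallest stopping box containing $Q_{J'}$; the maximality built into the construction forces $\vint_{Q_{J'}}w\le\gamma\,\vint_R w$ (the only cases being $Q_{J'}=R$, which is trivial, and $Q_{J'}\subsetneq R$, where otherwise $Q_{J'}$ would lie inside a child of $R$), while $R\supseteq P(x)=P$ and monotonicity of averages down the chain give $\vint_R w\le\vint_P w$; taking the supremum over $Q_{J'}$ yields $\mathrm{M}_Q w(x)\le\gamma\,\vint_P w$. \emph{(ii) Carleson packing:} since the children of $P$ are disjoint subsets of $P$ with average $>\gamma\vint_P w$,
\[
\sum_{P'\in\ch(P)}|P'|=\sum_{P'\in\ch(P)}\frac{w(P')}{\vint_{P'}w}<\frac{w(P)}{\gamma\,\vint_P w}=\frac{|P|}{\gamma},
\qquad\text{hence}\qquad
|E_P|\ge\Bigl(1-\tfrac1\gamma\Bigr)|P|.
\]

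To conclude, I combine (i), (ii), and the reverse H\"older inequality on each Carleson box $P$. On one hand, using $\mathrm{M}_Q w\ge\vint_P w$ on $E_P$ and then (ii),
\[
\int_Q(\mathrm{M}_Q w)^p=\sum_{P\in\mathcal{G}}\int_{E_P}(\mathrm{M}_Q w)^p\ \ge\ \sum_{P\in\mathcal{G}}(\vint_P w)^p|E_P|\ \ge\ \Bigl(1-\tfrac1\gamma\Bigr)\sum_{P\in\mathcal{G}}(\vint_P w)^p|P|.
\]
On the other hand, estimating each $\int_{E_P}w^p\le\int_P w^p\le C(\vint_P w)^p|P|$ and summing over the partition,
\[
\int_Q w^p=\sum_{P\in\mathcal{G}}\int_{E_P}w^p\ \le\ C\sum_{P\in\mathcal{G}}(\vint_P w)^p|P|\ \le\ \frac{C}{1-1/\gamma}\int_Q(\mathrm{M}_Q w)^p.
\]
This holds for every $\gamma>1$; if $\int_Q(\mathrm{M}_Q w)^p=\infty$ the target inequality is trivial, and otherwise letting $\gamma\to\infty$ gives $\int_Q w^p\le C\int_Q(\mathrm{M}_Q w)^p$, that is, \eqref{eq:Lpconditionmaximal}. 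I expect the only delicate point to be the inequality $\vint_{Q_{J'}}w\le\gamma\,\vint_R w$ in (i), which needs a short case check relying on the maximality of the stopping arcs and the disjointness of children; everything else is bookkeeping. As a sanity check, a cruder argument — decompose $Q=\bigsqcup_{J\in\mathcal{D}(I)}T_J$ into top halves and estimate $\int_{T_J}w^p\le\int_{Q_J}w^p\le C(\vint_{Q_J}w)^p|Q_J|\le 4C\,(\vint_{Q_J}w)^p|T_J|\le 4C\int_{T_J}(\mathrm{M}_Q w)^p$ using $|Q_J|\le 4|T_J|$ and $\mathrm{M}_Q w\ge\vint_{Q_J}w$ on $T_J\subseteq Q_J$, then sum — also proves the statement, but only with constant $4C$, which is why the $\gamma\to\infty$ device is needed to obtain the sharp constant $C$.
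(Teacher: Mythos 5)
Your argument is correct, but it is far more elaborate than what is needed, and it takes a genuinely different route from the paper. The paper's proof is essentially two lines: by Jensen's inequality, $\bigl(\tfrac{1}{|Q|}\int_Q w^p\bigr)^{1/p}\le\bigl(\tfrac{1}{|Q|}\int_Q w^{p_0}\bigr)^{1/p_0}\le C^{1/p_0}\,w_Q$, and since $\mathrm{M}_Q w\ge w_Q$ pointwise on $Q$, one has $(w_Q)^p|Q|\le\int_Q(\mathrm{M}_Q w)^p$; combining gives $\int_Q w^p\le C^{p/p_0}\int_Q(\mathrm{M}_Q w)^p\le C\int_Q(\mathrm{M}_Q w)^p$ because $C\ge1$. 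In other words, only the average over the whole box $Q$ is ever used, no decomposition is needed, and the constant $C$ (in fact $C^{p/p_0}$) comes out immediately. Your stopping-time construction with parameter $\gamma$, the tree/partition bookkeeping via Lemma~\ref{lem:maximalsubcollections} and Remark~\ref{rem:finitenessminimalmaximal}, the trapping estimate on $E_P$, and the Carleson packing bound are all sound as far as I can check (the tree-consistency and partition facts you wave at do hold for dyadic boxes), and the $\gamma\to\infty$ limit does deliver the constant $C$. What the extra machinery buys is a localized form of the inequality over the stopping family, but for the statement at hand it buys nothing: in particular, your closing remark that the refined device is \emph{needed} to beat the $4C$ of the top-half argument is mistaken, since the trivial global-average bound $\mathrm{M}_Q w\ge w_Q$ already yields the sharp constant $C$ with no decomposition at all. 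So: correct, but you missed the one-line observation that the lemma is really just \eqref{eq:definitionofRHI} on $Q$ itself combined with the crudest possible lower bound for $\mathrm{M}_Q w$.
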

\begin{proof}
Let $1 \leq p \leq p_0.$ Note that the estimate $\mathrm{M}_Q w(x) \geq \vint_Q w$ for all $x\in Q$ gives
$$
\left( \vint_Q \left( \mathrm{M}_Q w \right)^p \right)^{1/p} \geq \left( \vint_Q \left( \vint_Q w \right)^p \right)^{1/p} = \vint_Q w.
$$
Applying first Jensen's Inequality and then \eqref{eq:definitionofRHI}, we get
\begin{align*}
\left( \vint_Q w^p \right)^{1/p} \leq \left( \vint_Q w^{p_0} \right)^{1/p_0} \leq C^{1/p_0} \vint_Q w \leq C^{1/p_0} \left( \vint_Q \left( \mathrm{M}_Q w \right)^p \right)^{1/p},
\end{align*}
which yields the desired inequality.
\end{proof}

\begin{proof}[Proof of Theorem \ref{thm:maintheoremBinftyiffRHI}]
Assume first that $(a)$ holds. If $p_0>1$ is as in \eqref{eq:Lpconditionmaximal}, take $1<p \leq p_0$ smaller than $4 L/(4L-1)$, where $L=[w]_{\FW}.$ Applying condition \eqref{eq:Lpconditionmaximal} for $p$ and Lemma \ref{lemma:selfimprovedyadicmaximal}, we get
$$
\left( \vint_Q w^p \right)^{1/p} \leq C \left( \vint_Q (\mathrm{M}_Q w)^p \right)^{1/p}  \leq  C \left( \frac{L}{1-4L/p' } \right)^{1/p} \vint_Q w .
$$
This is the $p$-reverse H\"older inequality, which can be rewritten as
$$
\vint_Q w^p \leq C^p \frac{L}{1-4L/p' }  \left(  \vint_Q w \right)^p.
$$
Conversely, if $(b)$ holds, then Lemmas \ref{lem:RHIimpliesBinftyanddoub}, \ref{lem:BinftyimpliesFWanddoubling}, and \ref{lem:RHIimpliesepsiloncondition} give $(a).$
\end{proof}

\begin{remark}\label{rem:verificationofLpestimateinmaintheoremRHI}
{\em
In the previous proof, we really only needed the inequality \eqref{eq:Lpconditionmaximal} for a single exponent $p$ satisfying
$$
1<p< \frac{4[w]_{\FW}}{4[w]_{\FW}-1}.
$$ 
}
\end{remark}

\subsection{Characterizing $\Bq$ and $\Blog$ weights in terms of $\Binfty$}

\begin{lemma}\label{lem:BpimpliesLpconditionminimal}
If $w \in \mathrm{B}_{p_0}(\D)$, with $1<p_0<\infty$, then
$$
\int_Q \left( \frac{1}{w} \right)^{\frac{1}{p-1}} \leq [w]_{p_0}^{\frac{1}{p_0 - 1}} \int_Q \left( \frac{1}{\mathrm{m}_Q w} \right)^{\frac{1}{p-1}}
$$
for all Carleson boxes $Q$ and all $p \geq p_0.$ In other words, $w$ satisfies condition \eqref{eq:Lpconditionminimal} with constants $p_0$ and $C= [w]_{p_0}^{\frac{1}{p_0 - 1}}.$
\end{lemma}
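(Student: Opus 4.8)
The plan is to prove the inequality by reducing it, for each fixed Carleson box $Q=Q_I$ and each $p\ge p_0$, to the trivial pointwise bound $\mathrm{m}_Q w\le w_Q$ on $Q$ together with the $\mathrm{B}_{p_0}$ condition rewritten as an $L^{\sigma_0}$-estimate, where I write $\sigma_0:=1/(p_0-1)$ and $\sigma:=1/(p-1)\in(0,\sigma_0]$. First I would record that the definition \eqref{eq:defBp} of $[w]_{p_0}$ says precisely that, for every Carleson box $Q$,
\[
\vint_Q w^{-\sigma_0}\ \le\ [w]_{p_0}^{\sigma_0}\,w_Q^{-\sigma_0},
\]
and that $[w]_{p_0}\ge 1$ by Jensen's inequality applied to the convex function $t\mapsto t^{-\sigma_0}$ (which gives $\vint_Q w^{-\sigma_0}\ge w_Q^{-\sigma_0}$). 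Applying the displayed estimate to $Q=\D$ (equivalently, to the Carleson boxes $Q_I$ with $|I|\uparrow 1$, together with monotone convergence) then yields $w^{-\sigma_0}\in L^1(\D)$, i.e.\ $1/w\in L^{1/(p_0-1)}(\D)$, which is the first half of \eqref{eq:Lpconditionminimal}.

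Next I would pass from the exponent $\sigma_0$ to the smaller exponent $\sigma\le\sigma_0$. Since $t\mapsto t^{\sigma/\sigma_0}$ is concave on $[0,\infty)$, Jensen's inequality on the probability space $(Q,|Q|^{-1}\,dx)$ gives
\[
\vint_Q w^{-\sigma}=\vint_Q\bigl(w^{-\sigma_0}\bigr)^{\sigma/\sigma_0}\ \le\ \Bigl(\vint_Q w^{-\sigma_0}\Bigr)^{\sigma/\sigma_0}\ \le\ [w]_{p_0}^{\sigma}\,w_Q^{-\sigma},
\]
so that $\int_Q w^{-\sigma}\le [w]_{p_0}^{\sigma}\,|Q|\,w_Q^{-\sigma}\le [w]_{p_0}^{\sigma_0}\,|Q|\,w_Q^{-\sigma}$, the last step using $\sigma\le\sigma_0$ and $[w]_{p_0}\ge1$. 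Finally, since $Q=Q_I$ is itself one of the dyadic boxes $Q_J$, $J\in\mathcal D(I)$, occurring in the infimum that defines $\mathrm{m}_Q w$ (cf.\ Remark \ref{rem:finitenessminimalmaximal}), one has $\mathrm{m}_Q w(x)\le w_Q$ for every $x\in Q$; raising to the power $-\sigma$ and integrating gives $\int_Q(\mathrm{m}_Q w)^{-\sigma}\ge|Q|\,w_Q^{-\sigma}$. Combining the two displays yields
\[
\int_Q w^{-\sigma}\ \le\ [w]_{p_0}^{\sigma_0}\,|Q|\,w_Q^{-\sigma}\ \le\ [w]_{p_0}^{\sigma_0}\int_Q(\mathrm{m}_Q w)^{-\sigma},
\]
which is the claimed inequality with $C=[w]_{p_0}^{1/(p_0-1)}$.

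I do not expect any serious obstacle here: the whole argument rests on the observation that $\mathrm{B}_{p_0}$ is nothing but a lower bound for the arithmetic mean $w_Q$ in terms of $\bigl(\vint_Q w^{-\sigma_0}\bigr)^{-1/\sigma_0}$, while the bound $\mathrm{m}_Q w\le w_Q$ makes $(\mathrm{m}_Q w)^{-\sigma}$ pointwise at least $w_Q^{-\sigma}$, which after integration absorbs $\int_Q w^{-\sigma}$. The only points deserving care are the bookkeeping with the exponents ($p\ge p_0\iff\sigma\le\sigma_0$, and the fact that replacing $[w]_{p_0}^{\sigma}$ by $[w]_{p_0}^{\sigma_0}$ is harmless because $[w]_{p_0}\ge1$) and the mild exhaustion argument needed to get the global integrability $1/w\in L^{1/(p_0-1)}(\D)$; in particular the finer structure of the minimal operator plays no role in this (easy) direction beyond the elementary estimate $\mathrm{m}_Q w\le w_Q$.
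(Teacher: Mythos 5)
Your proof is correct and follows essentially the same route as the paper's: rewrite the $\mathrm{B}_{p_0}$ condition, pass from the exponent $1/(p_0-1)$ to $1/(p-1)$ by Jensen, use the trivial pointwise bound $\mathrm{m}_Q w\le w_Q$ integrated over $Q$, and absorb the constant via $[w]_{p_0}\ge 1$. The only (welcome) difference is that you also spell out the global integrability $1/w\in L^{1/(p_0-1)}(\D)$, which the paper leaves implicit by taking $I=\T$ in the $\mathrm{B}_{p_0}$ condition.
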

\begin{proof}
The $\mathrm{B}_{p_0}$ condition for $w$ is precisely
$$
\left( \vint_Q w \right) \left( \vint_Q w^{\frac{1}{1-p_0}} \right)^{p_0-1} \leq [w]_{p_0},
$$
for all Carleson boxes $Q$, and by Jensen's inequality we get, for all $p \geq p_0$, that
\begin{equation}\label{eq:proofimplicationBpLpestimateminimal}
\left( \vint_Q w \right) \left( \vint_Q w^{\frac{1}{1-p}} \right)^{p-1} \leq [w]_{p_0}.
\end{equation}
Now, by the pointwise estimate $\vint_Q w \geq \mathrm{m}_Q w(x) \Chi_Q(x),$ we get
$$
\left( \vint_Q w \right)^{\frac{1}{1-p}} \leq (\mathrm{m}_Q w)^{\frac{1}{1-p}} \cdot \Chi_Q.
$$
Integrating over $Q$ we obtain
$$
 \left( \vint_Q w \right)^{\frac{1}{1-p}} \leq \vint_Q (\mathrm{m}_Q w)^{\frac{1}{1-p}} ,
$$
and thus
$$
\vint_Q w \geq \left( \vint_Q (\mathrm{m}_Q w)^{\frac{1}{1-p}} \right)^{1-p},
$$
for all boxes $Q.$ This estimate and \eqref{eq:proofimplicationBpLpestimateminimal} lead us to
$$
\left( \vint_Q w^{\frac{1}{1-p}} \right)^{p-1} \leq [w]_{p_0} \left( \vint_Q (\mathrm{m}_Q w)^{\frac{1}{1-p}} \right)^{p-1}  \leq [w]_{p_0}^{\frac{p-1}{p_0-1}} \left( \vint_Q (\mathrm{m}_Q w)^{\frac{1}{1-p}} \right)^{p-1} ,
$$
after using that $[w]_{p_0} \geq 1.$ This is the desired estimate. 
\end{proof}

\begin{lemma}\label{lem:Blogimplieslogminimal}
If $w \in \Blog(\D)$, then
$$
 \int_Q \log \left( \frac{\mathrm{m}_Q w}{[w]_{\log}} \right) \leq \int_Q \log w
$$
for all Carleson boxes $Q.$ Consequently, \eqref{eq:logconditionminimal} holds. 
\end{lemma}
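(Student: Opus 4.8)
The plan is to mimic the structure of the proof of Lemma~\ref{lem:BpimpliesLpconditionminimal}, replacing the $L^p$-type arithmetic with its logarithmic analogue. The $\Blog$ condition states that for every Carleson box $Q$,
$$
\log\left(\vint_Q w\right) - \vint_Q \log w \leq \log [w]_{\log},
$$
so it suffices to produce a lower bound on $\vint_Q w$ in terms of an average of $\log(\mathrm{m}_Q w)$. First I would record the pointwise estimate $\mathrm{m}_Q w(x) \leq \vint_Q w$ for all $x \in Q$ (which is immediate from the definition of $\mathrm{m}_Q$, since $Q$ itself is an admissible dyadic box in the infimum — or rather, since any chain of averages reaches down to $\vint_Q w$), so that $\log(\mathrm{m}_Q w(x)) \leq \log\left(\vint_Q w\right)$. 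Averaging this over $Q$ gives
$$
\vint_Q \log(\mathrm{m}_Q w) \leq \log\left(\vint_Q w\right).
$$

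Next I would combine this with the $\Blog$ inequality above:
$$
\vint_Q \log(\mathrm{m}_Q w) \leq \log\left(\vint_Q w\right) \leq \vint_Q \log w + \log [w]_{\log} = \vint_Q \log\left([w]_{\log}\,w\right).
$$
Rearranging, $\vint_Q \log\bigl(\mathrm{m}_Q w / [w]_{\log}\bigr) \leq \vint_Q \log w$, which multiplied through by $|Q|$ is exactly the claimed inequality $\int_Q \log(\mathrm{m}_Q w/[w]_{\log}) \leq \int_Q \log w$. To get condition~\eqref{eq:logconditionminimal} with $C = [w]_{\log}$, I would rewrite the left side as $\int_Q \log(\mathrm{m}_Q w) - |Q|\log[w]_{\log}$ and the right side as $\int_Q \log w$, so that $\int_Q \log(\mathrm{m}_Q w) \leq \int_Q \log(C w)$ with $C = [w]_{\log}$; I should also note that $\int_{\D}\log w > -\infty$ is part of the hypothesis $w \in \Blog(\D)$, so the required side condition $\int_\D \log w > -\infty$ in~\eqref{eq:logconditionminimal} holds automatically.

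There is essentially no obstacle here — the proof is a two-line chain of inequalities. The only subtlety worth a sentence is the degenerate case where $\vint_Q \log w = -\infty$ (so that, by convention, $\exp(-\vint_Q \log w) = +\infty$ and the $\Blog$ bound forces $\vint_Q w = 0$, which is impossible for a weight); since $w \in \Blog(\D)$ and $\int_\D \log w > -\infty$ one has $\int_Q \log w > -\infty$ for every box $Q$ of positive measure, so all the quantities appearing are finite and the arithmetic above is valid without caveats. The final remark is that, just as noted after the Definition in the introduction, \eqref{eq:Lpconditionminimal} $\implies$ \eqref{eq:logconditionminimal} via a limiting argument, so this lemma is also consistent with (indeed, weaker in flavour than) Lemma~\ref{lem:BpimpliesLpconditionminimal}.
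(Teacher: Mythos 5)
Your proof is correct and follows essentially the same route as the paper: the pointwise bound $\mathrm{m}_Q w \leq \vint_Q w$, averaged over $Q$, combined with the \eqref{eq:defBlog} inequality (the paper merely phrases the final step by exponentiating and then taking logarithms, which is a cosmetic difference). Your remarks on the degenerate case $\vint_Q \log w = -\infty$ and on the choice $C=[w]_{\log}$ are fine and consistent with the paper's conventions.
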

\begin{proof}
By the pointwise estimate $\vint_Q w \geq \mathrm{m}_Q w(x)$ for all $x\in Q$, we get
$$
\log \left( \vint_Q w\right) \geq \log \left( \mathrm{m}_Q w (x) \right), \quad x\in Q.
$$
Integrating over $Q$ we derive that
$$
\log \left( \vint_Q w \right) \geq \vint_Q \log \left( \mathrm{m}_Q w \right),
$$
and thus
$$
\vint_Q w \geq \exp \left( \vint_Q \log \left( \mathrm{m}_Q w \right) \right)
$$
for all boxes $Q.$ This estimate and the definition of the \eqref{eq:defBlog} condition gives us that
$$
 \exp \left( \vint_Q \log \left( \mathrm{m}_Q w \right) \right) \leq \vint_Q w \leq [w]_{\log} \exp \left( \vint_Q \log w \right) .
$$
Taking logarithms and rearranging, we get the desired inequality. 
\end{proof}

Our next goal is to show that conditions \eqref{eq:Lpconditionminimal} (resp. \eqref{eq:logconditionminimal}) are sufficient for a $\Binfty$ weight to satisfy a \eqref{eq:defBp} (resp. \eqref{eq:defBlog}) property. 

\begin{lemma}\label{lem:BinftyimpliesEstimateMinimal}
Let $w \in \Binfty(\D)$, with parameters $\alpha$ and $\beta$. Then there is a constant $C=C(w)$ such that
\begin{equation}\label{eq:keyestimateRHIimpliesBp}
|Q \cap \lbrace 1/ \mathrm{m}_{Q} w > \lambda \rbrace | \leq  \frac{C(w) \lambda}{1-\beta} w \left( Q \cap \lbrace 1/w > \lambda \alpha \rbrace \right), 
\end{equation}
for all Carleson boxes $Q$ and $\lambda > 1/w_{Q}. $
\end{lemma}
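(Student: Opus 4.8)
The plan is to run a Calderón--Zygmund--type stopping-time argument for the \emph{minimal} function: we select the maximal dyadic subsquares of $Q$ on which the average of $w$ first drops below $1/\lambda$, and then on each such square we separately exploit the doubling property (to keep the average comparable to the threshold from below) and the quantitative absolute continuity encoded by $\Binfty$ (to pass from the full square to its subset where $w$ is small).

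First I would fix $Q=Q_I$ and $\lambda>1/w_Q$, and set $\mathcal F:=\{J\in\mathcal D(I): \vint_{Q_J} w < 1/\lambda\}$. Since $\vint_Q w = w_Q > 1/\lambda$, the arc $I$ itself does not belong to $\mathcal F$. If $\mathcal F=\emptyset$, then every average $\vint_{Q_J} w$ with $J\in\mathcal D(I)$ is $\ge 1/\lambda$, so (using that $\mathrm m_Q w$ is a minimum over finitely many such averages, cf. Remark~\ref{rem:finitenessminimalmaximal}) the set $Q\cap\{1/\mathrm m_Q w>\lambda\}$ is empty and there is nothing to prove. Otherwise, Lemma~\ref{lem:maximalsubcollections} provides a pairwise disjoint family $\{I_j\}$ of maximal arcs of $\mathcal F$, each \emph{properly} contained in $I$. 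Unwinding the definition of $\mathrm m_Q$ exactly as in the proof of Lemma~\ref{lem:CZdecomposition} shows
\[
Q\cap\{1/\mathrm m_Q w>\lambda\}=\bigcup_j Q_{I_j},
\]
a disjoint union: indeed $\mathrm m_Q w(x)<1/\lambda$ precisely when some dyadic $Q_J\ni x$ has $\vint_{Q_J} w<1/\lambda$, i.e. $J\in\mathcal F$, i.e. $J$ lies inside a maximal arc $I_j$.

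Next I would estimate each $|Q_{I_j}|$. Since $w\in\Binfty(\D)$, it is doubling by Lemma~\ref{lem:BinftyimpliesFWanddoubling}; let $\hat I_j\in\mathcal D(I)$ be the dyadic parent of $I_j$, which by maximality satisfies $\vint_{Q_{\hat I_j}} w\ge 1/\lambda$. Combining the doubling bound $w(Q_{\hat I_j})\le C(w)\,w(Q_{I_j})$ from \eqref{eq:doubmeasureforchildren} with $|Q_{I_j}|\le|Q_{\hat I_j}|$ gives $\vint_{Q_{I_j}} w\ge\frac{1}{C(w)\lambda}$, that is, $|Q_{I_j}|\le C(w)\,\lambda\,w(Q_{I_j})$. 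Finally I would invoke $\Binfty$ to replace $w(Q_{I_j})$ by $w(Q_{I_j}\cap\{1/w>\lambda\alpha\})$: on $E:=\{x\in Q_{I_j}: w(x)\ge 1/(\lambda\alpha)\}$ Chebyshev gives $|E|\le\lambda\alpha\,w(E)\le\lambda\alpha\,w(Q_{I_j})$, and since $I_j\in\mathcal F$ means $w(Q_{I_j})<|Q_{I_j}|/\lambda$ we get $|E|<\alpha|Q_{I_j}|$; the $\Binfty$ hypothesis then yields $w(E)\le\beta w(Q_{I_j})$, hence $w(Q_{I_j}\cap\{1/w>\lambda\alpha\})=w(Q_{I_j})-w(E)\ge(1-\beta)w(Q_{I_j})$. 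Chaining the two estimates and summing over the disjoint family $\{Q_{I_j}\}\subset Q$ produces the claimed inequality. The step requiring the most care is the interplay between the stopping threshold and doubling: without the doubling property the average of $w$ over a stopping square could be far below $1/\lambda$, and the crucial measure bound $|Q_{I_j}|\lesssim\lambda w(Q_{I_j})$ would break down; everything else is bookkeeping with the Calderón--Zygmund squares and an application of $\Binfty$ in its absolute-continuity form.
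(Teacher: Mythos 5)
Your proof is correct and follows essentially the same route as the paper: the stopping family of maximal dyadic squares where the average of $w$ drops below $1/\lambda$ is exactly the Calderón--Zygmund family the paper obtains by decomposing $w^{-1}$ with respect to the doubling measure $w\,\mathrm{d}x$ (using the identity $\mathrm{M}_Q^{w\,\mathrm{d}x}(w^{-1})=1/\mathrm{m}_Q w$), and your subsequent use of the dyadic parent plus doubling to get $|Q_{I_j}|\lesssim_w \lambda\, w(Q_{I_j})$, followed by Chebyshev and the \eqref{eq:defBifnty} condition to get $w(Q_{I_j})\leq (1-\beta)^{-1}w(Q_{I_j}\cap\{w<1/(\lambda\alpha)\})$, matches the paper's argument step for step.
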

\begin{proof}
 For all Carleson square $Q$ denote $E= Q \cap \lbrace w \geq w_{Q}/\alpha \rbrace$. By Markov's inequality, we have that $|E| \leq \alpha |Q|$ and so 
\begin{equation}\label{eq:distributionalvis}
w\left(  Q \cap  \lbrace w \geq w_{Q}/\alpha \rbrace   \right) \leq \beta w(Q).
\end{equation}
Now let $\ud \mu: =w \ud x$, which is a doubling measure by Lemma \ref{lem:RHIimpliesBinftyanddoub}. So, for each box $Q$ and $\lambda > \vint_{Q} w^{-1} \ud \mu =  1/w_{Q}$, let $\lbrace Q_j \rbrace_j$ be the Calderón-Zygmund cubes at height $\lambda$ for the function $w^{-1}$ with $\mu$ as the underlying measure, cf. Lemma \ref{lem:CZdecomposition}. We then have that
\begin{equation}\label{eq:estimateforCZcubes}
\lambda < \frac{1}{w_{Q_j}} \leq C(w) \lambda, \quad \text{for all} \quad j.
\end{equation} 
Combining \eqref{eq:distributionalvis} and \eqref{eq:estimateforCZcubes}, we get
$$
w(Q_j)    = w\left( Q_j \cap \lbrace w \geq w_{Q_j}/\alpha \rbrace \right) + w\left( Q_j \cap \lbrace w < w_{Q_j}/\alpha \rbrace \right)  \leq \beta w(Q_j) + w\left( Q_j \cap \lbrace w < 1/ (\lambda \alpha) \rbrace \right),
$$
and thus
$$
w(Q_j) \leq \frac{1}{1-\beta} w\left( Q_j \cap \lbrace w < 1/ (\lambda \alpha) \rbrace \right).
$$
This estimate, along with \eqref{eq:estimateforCZcubes} and the properties of the Calderón-Zygmund boxes lead us to
\begin{align*}\label{eq:estimatelevelsetsinverseweightedMaximal}
| Q \cap \lbrace \mathrm{M}_Q^\mu (w^{-1}) > \lambda \rbrace | & = \Big | \bigcup_j Q_j \Big |   \leq C(w) \lambda \sum_j w(Q_j) \nonumber \\
& \leq \frac{C(w) \lambda}{1-\beta} \sum_j  w\left( Q_j \cap \lbrace w < 1/ (\lambda \alpha) \rbrace \right) \nonumber \\
& \leq  \frac{C(w) \lambda}{1-\beta} w \left( Q   \cap \lbrace w < 1/ (\lambda \alpha) \rbrace \right).   
\end{align*}
This is the desired inequality, since for all $x\in Q=Q_{I},$ one has
\begin{align*}
\mathrm{M}_{Q}^\mu (w^{-1})(x) & = \sup_{Q_J\ni x, \, J \in \mathcal{D}(I)} \frac{\int_{Q_J} w^{-1} \ud \mu}{\mu(Q_J)} = \sup_{Q_J\ni x, \, J \in \mathcal{D}(I)} \frac{|Q_J|}{w(Q_J)} \\[0.5em]
& = \frac{1}{\underset{Q_J \ni x, \, J \in \mathcal{D}(I)}{\inf} \frac{w(Q_J)}{|Q_J|}} = \frac{1}{\mathrm{m}_{Q} w(x)}. \qedhere
\end{align*}
\end{proof}

\begin{lemma}\label{lem:BinftyandExtraconditionimpliesBpBlog}
Let $w \in \Binfty(\D)$. Then, the following hold.
\begin{itemize}
    \item[(i)] If $w$ satisfies \eqref{eq:Lpconditionminimal}, then $w \in \Bq(\D)$ for some $1 \leq q <\infty. $
    \item[(ii)] If $w$ satisfies \eqref{eq:logconditionminimal}, then $w$ belongs to $\Blog(\D).$ 
\end{itemize}
\end{lemma}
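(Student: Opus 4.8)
The plan is to use the key distributional estimate from Lemma~\ref{lem:BinftyimpliesEstimateMinimal} as a bridge that converts the hypotheses \eqref{eq:Lpconditionminimal} or \eqref{eq:logconditionminimal}, which compare $w$ against $\mathrm{m}_Q w$, into genuine averaged inequalities for $w$ itself over a Carleson box $Q$. In both parts the strategy is the same: fix a Carleson box $Q$, integrate the inequality \eqref{eq:keyestimateRHIimpliesBp} against a suitable power or against $\log$, and then feed the resulting estimate into the side condition to extract a bound on $\vint_Q w^{1/(1-p)}$ (for part (i)) or on $\exp(-\vint_Q \log w)$ (for part (ii)), thereby recognizing the \eqref{eq:defBp} or \eqref{eq:defBlog} condition.

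For part (i), I would first fix the exponent $p_0>1$ coming from \eqref{eq:Lpconditionminimal} and set $q=p_0$ (or a slightly larger exponent if needed to absorb constants). The goal is to show $\left(\vint_Q w\right)\left(\vint_Q w^{1/(1-q)}\right)^{q-1}\lesssim_w 1$. Since $\int_\D w^{-1/(p_0-1)}<\infty$ by hypothesis, $\mathrm{m}_Q w$ and hence $\vint_Q w^{1/(1-q)}$ are finite. The main computation is to estimate $\int_Q (1/\mathrm{m}_Q w)^{1/(q-1)}$ using the layer-cake formula together with \eqref{eq:keyestimateRHIimpliesBp}: one writes
\[
\int_Q \left(\frac{1}{\mathrm{m}_Q w}\right)^{\frac{1}{q-1}} = \text{(boundary term at } \lambda = 1/w_Q\text{)} + \frac{1}{q-1}\int_{1/w_Q}^\infty \lambda^{\frac{1}{q-1}-1}\,\bigl|Q\cap\{1/\mathrm{m}_Q w>\lambda\}\bigr|\,\dd\lambda,
\]
bound the superlevel measure by $\frac{C(w)\lambda}{1-\beta}\,w(Q\cap\{1/w>\lambda\alpha\})$, change variables $\lambda\mapsto\lambda/\alpha$, and recognize (after another layer-cake identity in the $w\,\dd x$ measure) an expression of the form $\int_Q w\cdot w^{-1/(q-1)} = \int_Q w^{1/(1-(q-1)^{-1}\cdot\ldots)}$ — more precisely, one reassembles $\int_Q w^{1+1/(q-1)\cdot(1-q)} = \int_Q w^{0}$-type bookkeeping carefully; the clean way is to observe that $\int_{1/w_Q}^\infty \lambda^{1/(q-1)}\,w(Q\cap\{1/w>\lambda\alpha\})\,\dd\lambda \lesssim \int_Q w(x)\,(1/w(x))^{1+1/(q-1)}\,\dd x = \int_Q w(x)^{-1/(q-1)}\,\dd x = \int_Q w^{1/(1-q)}$. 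Combining with \eqref{eq:Lpconditionminimal} then gives $\int_Q w^{1/(1-q)} \lesssim_w \int_Q (1/\mathrm{m}_Q w)^{1/(q-1)} \lesssim_w \int_Q w^{1/(1-q)}$, which by itself is only consistent, not conclusive; the actual gain must come from pairing the boundary term — which produces a clean $|Q|\,(1/w_Q)^{1/(q-1)}$ — with the inequality, so that after dividing by $\int_Q w^{1/(1-q)}$ one is left with $1 \lesssim_w (1/w_Q)^{1/(q-1)}/\vint_Q w^{1/(1-q)} + \text{(small)}$, and rearranging yields exactly $\left(\vint_Q w\right)^{1/(1-q)} \lesssim_w \vint_Q w^{1/(1-q)}$, i.e.\ the $\Bq$ inequality after raising to the power $q-1$ and multiplying.

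For part (ii), the argument is the logarithmic analogue. Using $\int_\D \log w > -\infty$ one has $\mathrm{m}_Q w$ integrable-in-$\log$ over $Q$. Starting from \eqref{eq:logconditionminimal}, which reads $\vint_Q \log(\mathrm{m}_Q w) \le \vint_Q \log(Cw)$, I would convert the left-hand side to a distributional integral over $\lambda$ of $|Q\cap\{1/\mathrm{m}_Q w>\lambda\}|$, apply \eqref{eq:keyestimateRHIimpliesBp} to dominate it by an integral of $w(Q\cap\{1/w>\lambda\alpha\})$, and reassemble this as $\vint_Q \log w$ plus a controlled error (the factor $\alpha$ and the constant $C(w)/(1-\beta)$ contribute only additive constants after taking $\log$). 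This produces $\exp(-\vint_Q \log w) \lesssim_w \exp(-\vint_Q \log(\mathrm{m}_Q w))$, and combined with the pointwise bound $\exp(\vint_Q \log(\mathrm{m}_Q w)) \le \vint_Q w$ already recorded in the proof of Lemma~\ref{lem:Blogimplieslogminimal} — wait, more directly one uses $\vint_Q w \le$ (the reassembled quantity), giving $\left(\vint_Q w\right)\exp(-\vint_Q\log w) \lesssim_w 1$, which is \eqref{eq:defBlog}.

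The main obstacle I anticipate is the bookkeeping in the layer-cake manipulations of part (i): one must track the boundary term at $\lambda = 1/w_Q$ carefully, since it is precisely this term — not the "bulk" term, which only yields a tautology — that encodes the actual $\Bq$ gain, and the change of variables $\lambda \to \lambda/\alpha$ together with the doubling constant $C(w)$ from the Calderón–Zygmund stopping time must be combined in the right order. A secondary subtlety is ensuring all the integrals are finite from the outset, which is exactly why the conditions \eqref{eq:Lpconditionminimal} and \eqref{eq:logconditionminimal} build in the global integrability hypotheses $1/w \in L^{1/(p_0-1)}(\D)$ and $\int_\D \log w > -\infty$; I would invoke these explicitly before passing to the distributional formulas.
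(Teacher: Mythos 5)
Your plan follows essentially the same route as the paper: both parts rest on Lemma~\ref{lem:BinftyimpliesEstimateMinimal}, a layer-cake representation split at $\lambda = 1/w_Q$, a boundary term $|Q|\,w_Q^{-1/(q-1)}$, and (in part (i)) absorption of the bulk term. For (i) your mechanism is the paper's: the bulk reassembles as $\tfrac{C\varepsilon}{(1+\varepsilon)\alpha^{1+\varepsilon}}\int_Q w^{-\varepsilon}$ with $\varepsilon = 1/(q-1)$, and the absorption requires this coefficient to be below $1$, which may force $q$ to be \emph{much} larger than $p_0$, not just ``slightly larger''; this is harmless since the lemma only claims some finite $q$, and the side condition \eqref{eq:Lpconditionminimal} is available at every $p\ge p_0$. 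Also note that your final displayed conclusion has the inequality reversed: the correct output of your own preceding line is $\vint_Q w^{1/(1-q)} \lesssim_w \left(\vint_Q w\right)^{1/(1-q)}$ (the direction you wrote is the trivial Jensen inequality), and it is this corrected direction that gives \eqref{eq:defBp} after raising to the power $q-1$.

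For (ii) the skeleton is right but the bookkeeping as written does not close. The reassembled bulk is not ``$\vint_Q \log w$ plus a controlled error'': after applying \eqref{eq:keyestimateRHIimpliesBp} the factor $\lambda$ cancels the $1/\lambda$ from the layer cake, and one gets the bound $\tfrac{C(w)}{(1-\beta)\alpha}\,|Q|$ outright, because $\int_0^\infty w\left( Q\cap\lbrace 1/w>t\rbrace \right)\dd t = \int_Q w\cdot w^{-1} = |Q|$. Moreover, the inequality you say this ``produces,'' namely $\exp(-\vint_Q\log w)\lesssim \exp(-\vint_Q\log \mathrm{m}_Q w)$, is just the hypothesis \eqref{eq:logconditionminimal} restated, so it adds nothing, and ``$\vint_Q w \le$ (the reassembled quantity)'' is not the right pairing either. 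The clean closure (and the paper's) is to run the layer cake on the nonnegative function $\log(w_Q/\mathrm{m}_Q w)$ (this also sidesteps the sign issue in $\log(1/\mathrm{m}_Qw)$), obtaining $\vint_Q \log\left(w_Q/\mathrm{m}_Q w\right)\le C(w)$, i.e.\ $w_Q \le C\,\exp\left(\vint_Q \log \mathrm{m}_Q w\right)$, and then feed in \eqref{eq:logconditionminimal} to conclude $w_Q\,\exp\left(-\vint_Q \log w\right)\lesssim_w 1$, which is \eqref{eq:defBlog}. With these two repairs your argument coincides with the paper's proof.
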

\begin{proof}
We begin with part $(i)$, for which we assume \eqref{eq:Lpconditionminimal}, and let $p_0$ be as in that condition. Letting $\varepsilon_0: = 1/(p_0-1)$ and $0< \varepsilon \leq \varepsilon_0$, condition \eqref{eq:Lpconditionminimal} and Lemma \ref{lem:BinftyimpliesEstimateMinimal} yield, for an arbitrary Carleson square $Q$, and a constant $C>0,$
\begin{align*}
\int_{Q} w^{-\varepsilon}  & \leq C \int_{Q} (\mathrm{m}_{Q} w)^{-\varepsilon} = C \int_0^\infty \varepsilon \lambda^{\varepsilon-1} |Q \cap \lbrace 1/ \mathrm{m}_Q w > \lambda \rbrace | \ud \lambda  \\
& \leq C \int_0^{1/w_{Q}} \varepsilon \lambda^{\varepsilon-1} |Q| \ud \lambda + C \int_{1/w_{Q}}^\infty \varepsilon \lambda^{\varepsilon-1} |Q \cap \lbrace 1/ \mathrm{m}_Q w > \lambda \rbrace | \ud \lambda \\
& \leq C \frac{|Q|}{(w_Q)^{\varepsilon}} + C \int_{0}^\infty \varepsilon \lambda^{\varepsilon} w \left( Q \cap \lbrace 1/w >\lambda \alpha \rbrace \right) \ud \lambda \\
& =  C \frac{|Q|}{(w_{Q})^{\varepsilon}} + \frac{  C \varepsilon}{(1+\varepsilon) \alpha^{1+\varepsilon}} \int_{Q} w^{-\varepsilon}.
\end{align*}
Dividing by $|Q|$, we deduce that
$$
\vint_{Q} w^{-\varepsilon} \leq C (w_Q)^{-\varepsilon} + \frac{  C \varepsilon}{(1+\varepsilon) \alpha^{1+\varepsilon}} \vint_Q w^{-\varepsilon}.
$$
Now, condition \eqref{eq:Lpconditionminimal} implies that $w^{-\varepsilon_0} \in L^1(\D),$ and so, by H\"older's Inequality, also $w^{-\varepsilon} \in L^1(\D)$. For sufficiently small $\varepsilon>0$ we thus conclude that 
$$
\left( \vint_{Q} w \right)^{\varepsilon} \vint_{Q} w^{-\varepsilon} \leq \frac{C}{1-\frac{ C \varepsilon}{(1+\varepsilon) \alpha^{1+\varepsilon}} }.
$$
This shows that $w\in \Bq(\D)$ for $q= 1+ 1/\varepsilon.$

For part $(ii)$, assume that $w$ satisfies \eqref{eq:logconditionminimal}. If $\int_Q \log w = +\infty,$ \eqref{eq:defBlog} trivially holds. We may therefore assume $\int_Q \log w \in \R.$ Using condition \eqref{eq:logconditionminimal}, we find a constant $c>0$ such that, for all boxes $Q,$
\begin{align*}
   \int_Q \log & \left( \frac{c \cdot w_Q}{w} \right)      \leq    \int_Q \log \left( \frac{w_Q}{\mathrm{m}_Q w} \right)   \\[0.5em] 
   & = \int_1^\infty \frac{1}{t} | Q \cap \lbrace 1/\mathrm{m}_Q w(x) \geq t/w_Q \rbrace |  \ud t 
    =\int_{1/w_Q}^\infty \frac{1}{t} |Q \cap \lbrace 1/\mathrm{m}_Q w >t \rbrace | \ud t.
\end{align*}

By Lemma \ref{lem:BinftyimpliesEstimateMinimal}, the last term is not greater than
\begin{align*}
\frac{C(w)}{1-\beta} \int_{1/w_Q}^\infty w  & \left( Q \cap \lbrace  1/ w > \alpha t \rbrace \right) \ud t  
 \leq   \frac{C(w)}{1-\beta} \int_{0}^\infty w \left( Q \cap \lbrace  1/ w > \alpha t \rbrace \right) \ud t \\[0.5em]
& = \frac{C(w)}{(1-\beta)\alpha} \int_{0}^\infty w \left( Q \cap \lbrace  1/ w >   t \rbrace \right) \ud t  =  \frac{C(w)}{(1-\beta)\alpha} \int_Q \frac{1}{w} \ud w  = \frac{C(w)}{(1-\beta)\alpha} |Q|.
\end{align*}
Here $\alpha$ and $\beta$ are the \eqref{eq:defBifnty}-constants of $w$. Dividing by $|Q|$, there is thus a constant $C' = C'(w)$ such that
$$
  \vint_Q \log   \left( \frac{c \cdot w_Q}{w} \right)   \leq C',
$$
which yields that $w \in \Blog(\mathbb{D})$.
\end{proof}

Combining the lemmatas in this section, we have now proven Theorems~\ref{thm:maintheoremBp} and \ref{thm:maintheoremBlog}. Similarly to Remark~\ref{rem:verificationofLpestimateinmaintheoremRHI}, the proof of Theorem~\ref{thm:maintheoremBp} shows that it is sufficient to check the inequality \eqref{eq:Lpconditionminimal} for one sufficiently large $p$, depending on the parameters of the \eqref{eq:defBifnty}-condition and the doubling constant of $w$. We choose not to make this explicit.

\section{Further results and examples}\label{sec:examples}

\subsection{$\Bp$-weights and bounded mean oscillation}
In the classical setting, one obtains the functions of bounded mean oscillation by taking logarithms of Muckenhoupt weights. This is a consequence of the celebrated John--Nirenberg inequality \cite{JN61}. 
The connection between Békollé--Bonami weights and $\mathrm{BMO}(\mathbb{D})$ was explored in \cite{LN23}, with bounded hyperbolic oscillation as a key side condition.

In general it is natural to consider the larger class of functions $f \in \mathrm{BMO}_{\mathrm{C}}(\mathbb{D})$ for which
$$
\|f\|_{\mathrm{BMO}_{\mathrm{C}}}:= \sup_{I \text{ arc of } \T} \vint_{Q_I} |f-f_{Q_I}| < \infty,
$$
where the supremum is taken only over the basis Carleson squares. Similarly, we say that $f\in L^1(\D)$ satisfies the John--Nirenberg inequality with respect to Carleson squares when there are constants $C$ and $b >0$ such that 
\begin{equation}\label{eq:JNineq}
|\lbrace x\in Q \, : \,  |f(x)-f_Q| >t \rbrace | \leq C \exp(-bt) |Q| \tag{JN}   \end{equation}
for all $t>0$ and all Carleson squares $Q.$

We outline the proof of the following characterization of logarithms of $\Bp$-weights. Note that a side condition is necessary, since $f(z) = -\frac{1}{|z|}$, $z \in \mathbb{D}$, is an example of a function $f \in \mathrm{BMO}_{\mathrm{C}}(\mathbb{D})$ such that $e^{\varepsilon f} \in L^1(\mathbb{D})$ for every $\varepsilon > 0$, but $e^{\varepsilon f}$ is never a $\Bp(\mathbb{D})$-weight for any $1 < p < \infty$.

\begin{theorem}\label{thm:BMOcharacterizationJN}
Let $1<p< \infty$ and $f\in  L^1(\D)$. Then the following statements are equivalent.
\begin{itemize}
    \item[\rm (a)] There exists $\varepsilon >0$ such that that $e^{\varepsilon f} \in \mathrm{B}_p(\D).$
    \item[\rm (b)] $f$ satisfies the John-Nirenberg inequality \eqref{eq:JNineq}.
  \item[\rm (c)] $f\in \mathrm{BMO}_{\mathrm{C}}(\D)$ and there are constants $C,A,a > 0$ such that
\begin{equation}\label{eq:conditionMexp}
| \lbrace x\in Q \,  : \,  |f(x)-f_Q|-A \mathrm{M}_Q(f-f_Q)(x) >t \rbrace | \leq C \exp(-at) |Q| \tag{Mexp}    
\end{equation}
for every $t>0$ and Carleson square $Q$.
\end{itemize}
\end{theorem}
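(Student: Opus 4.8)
The plan is to prove the cycle $(a) \Rightarrow (b) \Rightarrow (c) \Rightarrow (a)$, which mirrors the circle of ideas used for Theorems~\ref{thm:maintheoremBinftyiffRHI} and \ref{thm:maintheoremBp} once we pass to the weight $w = e^{\varepsilon f}$. The guiding dictionary is: $\mathrm{B}_p$-weights are closely tied to the reverse H\"older inequality via Corollary~\ref{cor:RHIiffBq} and Theorem~\ref{thm:maintheoremBinftyiffRHI}, while exponential integrability of $f - f_Q$ is the logarithmic shadow of reverse H\"older bounds for $e^{\varepsilon f}$, exactly as in the classical John--Nirenberg theory. The key technical point throughout is that pointwise domination by the maximal function fails on the Carleson basis, so every step that in the classical case is automatic must instead route through the side conditions \eqref{eq:Lpconditionmaximal}, \eqref{eq:Lpconditionminimal}, \eqref{eq:conditionMexp}, together with the Fujii-Wilson property supplied by Lemma~\ref{lem:BinftyimpliesFWanddoubling}.

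\textbf{Step 1: $(a) \Rightarrow (b)$.} Suppose $w = e^{\varepsilon f} \in \mathrm{B}_p(\D)$. Then $w \in \Binfty(\D)$ (by $\mathrm{B}_p \subset \Blog \subset \Binfty$, Lemma~\ref{lem:BpimpliesBinfty}), so $w$ is doubling and satisfies the Fujii-Wilson condition. Since $\mathrm{B}_p \subset \mathrm{B}_q$ for all $q \geq p$, the weight $w$ also lies in $\mathrm{B}_q$ for $q$ as large as we like; by Corollary~\ref{cor:RHIiffBq} (or more directly by the self-improvement in Lemma~\ref{lemma:selfimprovedyadicmaximal} applied after Lemma~\ref{lem:BpimpliesLpconditionminimal} and Lemma~\ref{lem:BinftyandExtraconditionimpliesBpBlog}) one extracts a reverse H\"older inequality for a small power $w^\delta$, and symmetrically $w^{-1} \in \mathrm{B}_{p'}$ gives a reverse H\"older inequality for $w^{-\delta}$. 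Combining the two-sided reverse H\"older bounds for $e^{\pm \delta \varepsilon f}$ in the standard way (expand $\vint_Q e^{\pm \delta\varepsilon(f - f_Q)}$ as a power series and control it by $\vint_Q e^{\pm\delta\varepsilon f}/\exp(\pm\delta\varepsilon f_Q) \lesssim 1$, using Jensen for one of the two directions) yields $\vint_Q e^{c|f - f_Q|} \lesssim 1$ for some $c > 0$, which by Chebyshev is exactly \eqref{eq:JNineq}.

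\textbf{Step 2: $(b) \Rightarrow (c)$.} The John-Nirenberg bound \eqref{eq:JNineq} immediately gives $f \in \mathrm{BMO}_{\mathrm C}(\D)$ by integrating the tail estimate. For \eqref{eq:conditionMexp}, set $g_Q := |f - f_Q|$ on $Q$. The point is that \eqref{eq:JNineq} forces $\mathrm M_Q g_Q$ to also have an exponential distribution on $Q$: applying the Calder\'on--Zygmund decomposition of Lemma~\ref{lem:CZdecomposition} (Lebesgue measure, $C(\mu)=4$) to $g_Q$ at heights $\lambda = t$, the superlevel set $\{\mathrm M_Q g_Q > t\}$ is a disjoint union of Carleson boxes $Q_j$ with $t < \vint_{Q_j} g_Q \leq 4t$; on each $Q_j$ one has $\vint_{Q_j} g_Q \le \vint_{Q_j}|f - f_{Q_j}| + |f_{Q_j} - f_Q| $, and iterating the John--Nirenberg bound up the dyadic chain from $Q_j$ to $Q$ controls $|f_{Q_j}-f_Q|$ logarithmically in $|Q|/|Q_j|$, which is the usual argument showing $\mathrm M_Q g_Q \lesssim_{\text{JN}} \log$-type growth. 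In any case, trivially $g_Q - A\,\mathrm M_Q g_Q \le g_Q$, so \eqref{eq:conditionMexp} follows directly from \eqref{eq:JNineq} with $A$ arbitrary; the content of $(c)$ is only that the BMO norm is finite, already established.

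\textbf{Step 3: $(c) \Rightarrow (a)$.} This is the substantive direction. Given $f \in \mathrm{BMO}_{\mathrm C}(\D)$ with \eqref{eq:conditionMexp}, pick $\varepsilon > 0$ small (to be fixed depending on $a$, $A$, $C$ and the BMO norm) and set $w = e^{\varepsilon f}$. One first checks $w$ satisfies the Fujii-Wilson condition: using \eqref{eq:conditionMexp} one estimates $\int_Q \mathrm M_Q w$ by splitting according to whether $\mathrm M_Q(\varepsilon f - \varepsilon f_Q)(x)$ exceeds $\tfrac1{2A}(\varepsilon g_Q(x))$ or not; on the good part $w \lesssim e^{\varepsilon f_Q} \cdot e^{\tfrac12 \varepsilon g_Q}$ is controlled by a power series in $\varepsilon$ using finiteness of $\|f\|_{\mathrm{BMO}_{\mathrm C}}$ and John-Nirenberg-type integrability (which one should first re-derive from $\mathrm{BMO}_{\mathrm C}$ alone — note $\mathrm{BMO}_{\mathrm C}$ does \emph{not} a priori imply John--Nirenberg on this basis, which is precisely why the side condition is needed, so one cannot skip this), while on the bad part \eqref{eq:conditionMexp} provides an exponential gain that beats the exponential loss from $\mathrm M_Q w$ provided $\varepsilon A < a$. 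This gives $w \in \Binfty(\D)$ via Theorem~\ref{thm:maintheoremBinftyiffFW} once condition \eqref{eq:conditionforFWiffBinfty} is also verified from \eqref{eq:conditionMexp} by the same good/bad split. Finally, to upgrade from $\Binfty$ to $\mathrm{B}_p$, one invokes Theorem~\ref{thm:maintheoremBp}: it remains to produce condition \eqref{eq:Lpconditionminimal} for $w^{-1}$, i.e. a reverse bound $\int_Q w^{-s} \lesssim \int_Q (\mathrm m_Q w)^{-s}$. Observe $1/\mathrm m_Q w(x) = \mathrm M_Q^{1}(e^{-\varepsilon f})$-type quantity, and $(\mathrm m_Q w)^{-1} \le e^{-\varepsilon f_Q} \mathrm M_Q e^{\varepsilon(f_Q - f)}$; running the same Calder\'on--Zygmund argument for the function $e^{\varepsilon(f_Q-f)}$ at heights $\lambda$, the set where $1/\mathrm m_Q w$ is large is contained, up to the logarithmic distortion inside dyadic chains controlled by $\mathrm M_Q(f - f_Q)$, in a set where $f_Q - f$ is large, and \eqref{eq:conditionMexp} converts this into the needed $L^s$ comparison for small $s$ (equivalently large $p = 1 + 1/s$). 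With both $\Binfty$ and \eqref{eq:Lpconditionminimal} in hand, Theorem~\ref{thm:maintheoremBp} gives $w \in \mathrm B_q(\D)$ for some finite $q$; shrinking $\varepsilon$ (which replaces $w$ by $w^{\varepsilon'/\varepsilon}$ and can only improve the $\mathrm B$-exponent, since $w \in \mathrm B_q \Rightarrow w^\theta \in \mathrm B_{1 + \theta(q-1)}$ for $\theta \in (0,1)$ — a one-line Jensen computation) brings us into $\mathrm B_p$ for the prescribed $p$.

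\textbf{Main obstacle.} The delicate point is Step~3, and specifically the interplay of the two exponential scales: the gain $e^{-at}$ coming from \eqref{eq:conditionMexp} must dominate the loss $e^{At}$ produced when $\mathrm M_Q w$ replaces $w$, which is only possible for $\varepsilon$ small enough that $\varepsilon A < a$; and simultaneously one must extract first-moment (John--Nirenberg-type) integrability of $f - f_Q$ from $\mathrm{BMO}_{\mathrm C}$ \emph{combined with} \eqref{eq:conditionMexp}, rather than from $\mathrm{BMO}_{\mathrm C}$ alone — on the sparse Carleson basis the classical John--Nirenberg lemma is unavailable, so this bootstrap, together with keeping track of the dyadic-chain logarithmic distortion $|f_{Q_j} - f_Q| \lesssim \|f\|_{\mathrm{BMO}_{\mathrm C}} \log(|Q|/|Q_j|)$ in the Calder\'on--Zygmund sums, is where the real work lies.
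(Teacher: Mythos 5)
Your proposed cycle $(a)\Rightarrow(b)\Rightarrow(c)\Rightarrow(a)$ is a genuinely different organization from the paper, which proves $(a)\Leftrightarrow(b)$ by the classical Jensen/exponential computation, observes that $(b)\Rightarrow(c)$ is trivial, and closes the loop with $(c)\Rightarrow(b)$. Your Steps 1 and 2 are essentially sound (modulo the imprecision that $w\in\mathrm{B}_p$ does not give $w^{-1}\in\mathrm{B}_{p'}$ -- the correct dual object is $w^{1/(1-p)}$ -- but the Jensen computation you describe is what actually carries that step, so this is a slip rather than an error). The difficulty is Step 3, where you attempt $(c)\Rightarrow(a)$ directly through Theorems~\ref{thm:maintheoremBinftyiffFW} and \ref{thm:maintheoremBp}.

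There is a genuine gap there: every verification in your Step 3 -- the bound on the ``good part'' of your split, the Fujii--Wilson estimate, condition \eqref{eq:conditionforFWiffBinfty}, and condition \eqref{eq:Lpconditionminimal} -- rests on exponential integrability over $Q$ of $\mathrm{M}_Q(f-f_Q)$, that is, on a distributional estimate of the form $|\{x\in Q:\mathrm{M}_Q(f-f_Q)(x)>t\}|\lesssim e^{-kt/\|f\|_{\mathrm{BMO}_{\mathrm{C}}}}|Q|$ valid for all $f\in\mathrm{BMO}_{\mathrm{C}}(\D)$. You never state or prove such an estimate, and your commentary on the point is self-contradictory: you correctly note that $\mathrm{BMO}_{\mathrm{C}}$ does not imply the John--Nirenberg inequality on the Carleson basis, yet you invoke ``John--Nirenberg-type integrability \dots re-derive[d] from $\mathrm{BMO}_{\mathrm{C}}$ alone'' to control the good part. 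The resolution -- and the actual key lemma of the paper's proof, namely \eqref{eq:estimateMQJN} -- is that $\mathrm{BMO}_{\mathrm{C}}$ does imply John--Nirenberg decay for the \emph{dyadic maximal function} $\mathrm{M}_Q(f-f_Q)$, proved by running Nirenberg's stopping-time argument with the Calder\'on--Zygmund decomposition of Lemma~\ref{lem:CZdecomposition}, with the sets $Q\cap\{|f-f_Q|>t\}$ replaced by $Q\cap\{\mathrm{M}_Q(f-f_Q)>t\}$. Without this lemma your good/bad split has no estimate on the good part and Step 3 does not close. Moreover, once that lemma is available, combining it with \eqref{eq:conditionMexp} gives \eqref{eq:JNineq} in two lines (split $\{|f-f_Q|>t\}$ according to whether $A\,\mathrm{M}_Q(f-f_Q)>t/2$ or not), i.e.\ $(c)\Rightarrow(b)$ directly, after which $(a)$ follows from the classical $\mathrm{B}_2$ computation; the long detour through the Fujii--Wilson and \eqref{eq:Lpconditionminimal} verifications -- the latter of which you only gesture at (``up to the logarithmic distortion inside dyadic chains'') -- becomes unnecessary. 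Your auxiliary observation that $w\in\mathrm{B}_q\Rightarrow w^{\theta}\in\mathrm{B}_{1+\theta(q-1)}$ for $\theta\in(0,1)$ is correct and does handle the adjustment of $\varepsilon$ to reach the prescribed $p$, but that is the easy part of the argument.
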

\begin{proof}[Proof sketch]
By scaling, it is sufficient to consider $p = 2$.
It is clear that (b) implies (c). That (a) and (b) are equivalent has an identical proof to the corresponding result for the basis of cubes in $\R^n$, cf. \cite[Corollary 2.18]{GCRF85}.

It remains to show that (c) implies (b). The key is that 
$f\in \mathrm{BMO}_{\mathrm{C}}(\D)$ implies the existence of constants $K,k>0$ so that
\begin{equation}\label{eq:estimateMQJN}
 |\lbrace x\in Q \, : \,  \mathrm{M}_Q(f-f_Q)(x) >t \rbrace | \leq K |Q| \exp \left(- kt/\|f\|_{\mathrm{BMO}} \right) 
\end{equation}
for every $t>0$ and $Q$. This can be proved by imitating the arguments of \cite[pp. 64--66]{N77}, taking Lemma \ref{lem:CZdecomposition} into account and replacing the sets $Q \cap \lbrace |f-f_Q| >t \rbrace$ with $Q \cap \lbrace \mathrm{M}_Q(f-f_Q) >t \rbrace$ at the appropriate points. Clearly, \eqref{eq:estimateMQJN} in conjunction with \eqref{eq:conditionMexp} allows us to derive \eqref{eq:JNineq} for sufficiently small $b > 0$.
\end{proof}

\subsection{Examples} We finish the paper with two examples of weights demonstrating the optimality of our results. First we give an example of a weight which is not dominated by any of its maximal functions. In particular, it falls outside of the frameworks considered in \cite{APR19, B04, DLP25, LN23}.

\begin{example}\label{ex:BpandRHInotBHO} \rm
We shall consider a radial weight $w(z) = v(1-|z|)$, $z \in \mathbb{D}$, where
\[
v(r) = \begin{cases} x^{-k}, &2^{-k-1} < r \leq 2^{-k-1}(1 + 2^{-k-1})\\

k+1, &2^{-k-1}(1 + 2^{-k-1}) < r \leq 2^{-k}(1 - 2^{-k-2}), \\ x^k, &2^{-k}(1 - 2^{-k-2}) < r \leq 2^{-k}, \end{cases}\qquad k = 0, 1, 2, \ldots,
\]
for a given number $1 < x < 2$. Then the following hold:
\begin{enumerate}
    \item $w$ satisfies a reverse H\"older inequality;
    \item  $w$ satisfies \eqref{eq:Lpconditionminimal} and $w \in \mathrm{B}_q(\D)$ for some $1 < q < \infty$;
    \item $w \cdot \Chi_Q \not \lesssim \mathrm{M}_Qw $ and $\mathrm{m}_Qw \not \lesssim w \cdot \Chi_Q$, with constants independent of the Carleson box $Q.$
\end{enumerate}
To see this, note that for any $q \in \mathbb{R}$ and Carleson square $Q$ with 
\[ 2^{-n-1} < \mathrm{dist}(Q, 0) \leq 2^{-n},\]
we have that
\[ \vint_Q w^q \approx 2^n \sum_{k=n}^\infty 2^{-k}(k^q + 2^{-k}(x^{-qk} + x^{qk})) \approx \begin{cases} n^q, &\frac{1}{2} < x^q \leq 2, \\ 
1, &x^q = \frac{1}{2}, \\
(\frac{x}{2})^n, &2 < x^q < 4, \\
(\frac{1}{2x})^n, &\frac{1}{4} < x^q < \frac{1}{2}, \\
\infty, &\textrm{otherwise}.\end{cases}
\]
Therefore $w \in \RHI_p$ if and only if $1 \leq p \leq \frac{\log 2}{\log x}$, and $w \in \mathrm{B}_q(\mathbb{D})$ if and only if $1 + \frac{\log x}{\log 2} < q \leq \infty$. 
In view of Theorems~\ref{thm:maintheoremBinftyiffRHI} and \ref{thm:maintheoremBp}, $w$ must then satisfy \eqref{eq:Lpconditionminimal} and \eqref{eq:Lpconditionmaximal} (and thus \eqref{eq:logconditionminimal}). This can also be verified directly.
\end{example}

Next we give an example of a very poorly behaved weight with the Fujii-Wilson condition, demonstrating that side conditions are a must. Furthermore, the example shows that one cannot replace the maximal function $\mathrm{M}_Q w$ with the general maximal function $\mathrm{M} w$ in condition \eqref{eq:Lpconditionmaximal}.

\begin{example}  \rm
We again consider a radial weight $w\colon \mathbb{D} \to [0,\infty),$
$$ w(re^{i\theta}) = 
\begin{cases} \frac{v(r)}{r}, \; r > 1/2, \\[0.6em]
2v(1/2), \; r \leq 1/2,
\end{cases}
$$
where
$$
v(x) = \frac{2 e^{-\frac{1}{(1-x)^2}}}{ (1-x)^3},  \quad 0<x<1. 
$$ 
The following hold.
\begin{enumerate}
    \item $w\in L^\infty(\D)$, so that, in particular $w$ is a weight in $\D.$
    \item $w$ satisfies the Fujii-Wilson condition.
    \item $w(x) \lesssim \mathrm{M} w(x) $ for all $x\in \D,$ but $w \cdot \Chi_Q \not \lesssim \mathrm{M}_Qw$, with a constant independent of $Q$.
    \item $w$ is not doubling. In particular, $w$ does not satisfy the \eqref{eq:defBifnty} property, and thus no reverse H\"older inequality holds for $w$. Moreover, in accordance with Theorem \ref{thm:maintheoremBinftyiffFW}, $w$ cannot satisfy \eqref{eq:conditionforFWiffBinfty}.
\end{enumerate}
\end{example}

Observe that the function
\begin{equation}\label{eq:increasingaverages}
(0,1) \ni x \mapsto \frac{e^{-\frac{1}{(1-x)^2}}}{1-x} 
\end{equation}
is decreasing. Therefore, for $|z| > 1/2$,
$$
\mathrm{M} w (z) \approx \sup_{1/2 \leq a \leq |z| < 1} \frac{\int_a^1 v}{1-a} = \sup_{1/2 \leq a \leq |z| < 1} \frac{e^{-\frac{1}{(1-a)^2}}}{1-a} = 2e^{-4}.
$$
Since $w$ is bounded, we conclude that $w(z) \lesssim \mathrm{M} w(z)$, $z \in \mathbb{D}$.

Suppose now that $Q$ is a Carleson square with $\mathrm{dist}(Q, 0) = b \geq 1/2$.
Then, for $z \in Q$, using again that the function in \eqref{eq:increasingaverages} is decreasing, we see that
\begin{align*}
\M(w \cdot \Chi_Q)(z) &\approx \sup_{1/2 \leq a \leq |z| < 1} \frac{\int_a^1 v \cdot \Chi_{[b,1)}}{1-a} =\sup_{1/2 \leq a \leq |z| < 1} \frac{e^{-\frac{1}{(1-\max(a,b))^2}} }{1-a} \\
& \leq  \sup_{1/2 \leq a \leq x < 1} \frac{e^{-\frac{1}{(1-b)^2}} }{1-b} \frac{1-\max(a,b)}{1-a} \leq  \frac{e^{-\frac{1}{(1-b)^2}} }{1-b}.
\end{align*}
This shows that $w \cdot \Chi_{Q} \not \lesssim \M(w \cdot \Chi_Q)$,
as otherwise there would exist a constant $C>0$ such that
$$
\frac{2 e^{-\frac{1}{(1-|z|)^2}}}{ (1-|z|)^3} \leq C \frac{e^{-\frac{1}{(1-b)^2}} }{1-b}, \quad  \quad |z| \in (b,1), \; b\in [1/2,1),
$$
which would imply that $(1-b)^2 \geq 1/(2C)$ for all $b \in [1/2,1)$. In particular, $w \cdot \Chi_Q \not \lesssim \mathrm{M}_Q w$.

The same computation shows that $w$ satisfies the Fujii-Wilson condition, since
\[\M(w \cdot \Chi_Q)(z) \lesssim \frac{e^{-\frac{1}{(1-b)^2}} }{1-b} \approx \vint_{Q} w, \qquad z \in Q, \, \textrm{dist}(Q,0) =b \geq 1/2.\]
and consequently
\[
\int_{Q} \M (w \cdot \Chi_{Q}) \lesssim \int_{Q} w.
\]

Finally, to check that $w$ is not a doubling weight, note that for $b \in (1/2,1)$ we have
$$
\frac{w(Q)}{w(\frac{1}{2}Q)} \approx \dfrac{e^{-\frac{1}{(1-b)^2}}}{e^{-\frac{1}{((1-b)/2)^2}}} = \dfrac{e^{-\frac{1}{(1-b)^2}}}{e^{-\frac{4}{(1-b)^2}}} = e^{\frac{3}{(1-b)^2}},
$$
which diverges as $b \to 1^-.$

\section*{Acknowledgements}

Both authors were supported by grant no. 334466 of the Research Council of Norway, ``Fourier Methods and Multiplicative Analysis''.

\bibliographystyle{amsplain} 
\bibliography{Bpissues} 
\end{document}